\definecolor{cit}{rgb}{0.91,0.39,0.16}	%
\definecolor{dark-gray}{gray}{0.3}
\definecolor{dkgray}{rgb}{.3,.3,.3}
\definecolor{medgray}{rgb}{.5,.5,.5}
\definecolor{ltgray}{rgb}{.7,.7,.7}
\definecolor{dkblue}{rgb}{0,0,.5}
\definecolor{medblue}{rgb}{0,0,.75}
\definecolor{ltblue}{rgb}{0.97,0.97,1}
\definecolor{rust}{rgb}{0.5,0.1,0.1}
\definecolor{ltyellow}{rgb}{1, 1, 0.9}
\newcommand{\hilite}[1]{\hl{\textit{#1}}}
\g@addto@macro{\UrlBreaks}{\UrlOrds}
\ifnum\bookmarkget{level}=0 %
\ifnum\bookmarkget{level}=-1 %
\newcommand{\lang}{\textit}
\setlist{noitemsep} %
\setlist[enumerate]{font=\sffamily\bfseries\footnotesize\textcolor{dkgray},label=\arabic*.}
\setlist[itemize]{font=\textcolor{dkgray},label=\small\textcolor{dkgray}\textbullet}
\numberwithin{equation}{section}
\theoremstyle{plain}
\newtheorem{theorem}{Theorem}[section]
\newtheorem{proposition}[theorem]{Proposition}
\newtheorem{fact}[theorem]{Fact}
\theoremstyle{definition}
\newtheorem{definition}[theorem]{Definition}
\newtheorem{example}[theorem]{Example}
\newtheorem*{ideaT}{Theme}
\newtheorem*{problemT}{Problem}
\Crefname{proposition}{Proposition}{Propositions}
\newmdenv[skipabove=6pt,
skipbelow=6pt,
rightline=false,
leftline=true,
topline=false,
bottomline=false,
backgroundcolor=ltyellow,
linecolor=cit,
innerleftmargin=10pt,
innerrightmargin=10pt,
innertopmargin=0pt,
innerbottommargin=5pt,
leftmargin=0cm,
rightmargin=0cm,
linewidth=4pt]{iBox}	
\newenvironment{idea}{\begin{iBox}\begin{ideaT}}{\end{ideaT}\end{iBox}}
\newenvironment{problem}{\begin{iBox}\begin{problemT}}{\end{problemT}\end{iBox}}
\newmdenv[skipabove=0pt,
skipbelow=0pt,
backgroundcolor=ltblue,
linecolor=dkblue,
linewidth=2pt,
rightline=false,
leftline=false,
topline=false,
bottomline=false,
innerleftmargin=7pt,
innerrightmargin=10pt,
innertopmargin=6pt,
innerbottommargin=6pt,
leftmargin=0cm,
rightmargin=0cm,
innerbottommargin=5pt]{aBox}
\newenvironment{algbox}{\begin{aBox}\begin{algorithmic}}{\end{algorithmic}\end{aBox}}
\newmdenv[skipabove=10pt,
skipbelow=10pt,
backgroundcolor=white,
linecolor=dkblue,
linewidth=0.5pt,
rightline=true,
leftline=true,
topline=true,
bottomline=true,
innerleftmargin=10pt,
innerrightmargin=0.5in,
innertopmargin=5pt,
innerbottommargin=5pt,
leftmargin=0cm,
rightmargin=0cm]{lfBox}
\newenvironment{listbox}{\begin{lfBox}}{\end{lfBox}}
\numberwithin{figure}{section}
\numberwithin{table}{section}
\numberwithin{recipe}{section}
\algrenewcommand\alglinenumber[1]{\sf\scriptsize\color{dkgray}{#1}}
\algrenewcommand\algorithmicrequire{\textbf{Input:}}
\algrenewcommand\algorithmicensure{\textbf{Output:}}
\newcommand{\econst}{\mathrm{e}}
\newcommand{\eps}{\varepsilon}
\renewcommand{\phi}{\varphi}
\newcommand{\vct}[1]{\bm{#1}}
\newcommand{\mtx}[1]{\bm{#1}}
\newcommand{\set}[1]{\mathsf{#1}}
\newcommand{\term}[1]{\textit{#1}}
\newcommand{\sfbf}[1]{\textbf{\textsf{\small{#1}}}}
\newcommand{\N}{\mathbb{N}}
\newcommand{\R}{\mathbb{R}}
\newcommand{\C}{\mathbb{C}}
\newcommand{\M}{\mathbb{M}}
\newcommand{\Sym}{\mathbb{H}}
\newcommand{\range}{\operatorname{range}}
\newcommand{\rank}{\operatorname{rank}}
\newcommand{\trace}{\operatorname{tr}}
\newcommand{\diag}{\operatorname{diag}}
\newcommand{\Id}{\mathbf{I}}
\newcommand{\psdle}{\preccurlyeq}
\newcommand{\abs}[1]{\vert {#1} \vert}
\newcommand{\norm}[1]{\Vert {#1} \Vert}
\newcommand{\ip}[2]{\langle {#1}, \ {#2} \rangle}
\newcommand{\abssq}[1]{\abs{#1}^2}
\newcommand{\normsq}[1]{\norm{#1}^2}
\newcommand{\fnorm}[1]{\norm{#1}_{\mathrm{F}}}
\newcommand{\fnormsq}[1]{\norm{#1}_{\mathrm{F}}^2}
\newcommand{\lip}[2]{\left\langle {#1}, \ {#2} \right\rangle}
\newcommand{\lnorm}[1]{\left\Vert {#1} \right\Vert}
\newcommand{\Expect}{\operatorname{\mathbb{E}}}
\newcommand{\Var}{\operatorname{Var}}
\newcommand{\Probe}{\mathbb{P}}
\newcommand{\Prob}[1]{\Probe\left\{ #1 \right\}}
\newcommand{\condbar}{\, \vert \,}
\newcommand{\normal}{\textsc{normal}}
\newcommand{\uniform}{\textsc{uniform}}
\newcommand{\minimize}{{{\textsf{minimize}}}}
\newcommand{\subjto}{{{\textsf{subject\ to}}}}
\DeclareFontFamily{U}{matha}{\hyphenchar\font45}
\DeclareFontShape{U}{matha}{m}{n}{
  <-6> matha5 <6-7> matha6 <7-8> matha7
  <8-9> matha8 <9-10> matha9
  <10-12> matha10 <12-> matha12
  }{}
\DeclareSymbolFont{matha}{U}{matha}{m}{n}
\DeclareMathSymbol{\abscont}{3}{matha}{"CE}
\DeclareMathOperator*{\intdim}{int\,dim}
\newcommand{\lt}{\left}
\newcommand{\rt}{\right}
\newcommand{\err}{\mathrm{err}}
\newcommand{\dem}{\mathrm{dem}}
\def\paragraph{\@startsection{paragraph}{4}%
  \z@\z@{-\fontdimen2\font}%
  {\normalfont\scshape}}
\newcommand{\eedit}[2]{\textcolor{red}{\st{#1}}\textcolor{blue}{#2}} %
\title[Randomized Matrix Computations: Themes and Variations]{Randomized Matrix Computations: \\ Themes and Variations}
\author[A.~Kireeva]{Anastasia Kireeva}
\address[AK]{Department of Mathematics, ETH, Z{\"u}rich, Switzerland.} 
\email{anastasia.kireeva@math.ethz.ch}
\author[J.~A.~Tropp]{Joel A.~Tropp}
\address[JAT]{Department of Computing and Mathematical Science, Caltech, Pasadena, CA, USA.}
\email{jtropp@caltech.edu, https://tropp.caltech.edu}
\date{Lectures: Cetraro, July 3--7, 2023.  Notes: February 20, 2024.  Revised: April 2, 2024.}
\subjclass[2020]{15-02; 60-02; 65-02.}
\keywords{Matrix computations; numerical linear algebra; randomized algorithms}
\begin{document}

\begin{abstract}
This short course offers a new perspective on randomized algorithms for matrix computations.
It explores the distinct ways in which probability can be used to design algorithms
for numerical linear algebra.
Each design template is illustrated by its application to several computational problems.
This treatment establishes conceptual foundations for randomized numerical linear algebra,
and it forges links between algorithms that may initially seem unrelated.
\end{abstract}

\maketitle

\vspace{-.33in}

\tableofcontents

\vspace{-0.33in}

\newpage

\section{Motivation}

Numerical analysis and probability theory have not always been
on the best interpersonal terms.  Although Monte Carlo methods date back to
the earliest days of numerical computation~\cite{Met87:Beginning-Monte},
researchers have often been skeptical about their performance.
For example, Monte Carlo algorithms for approximating integrals~\cite[Sect.~9.9.3]{QSS07:Numerical-Mathematics-2ed}
can only achieve low precision and their output is highly variable.
This fact about this particular procedure
inspired a general sentiment that randomized algorithms
are unreliable tools for numerical problems:

\begin{listbox}
``Our experience suggests that many practitioners of scientific computing
view randomized algorithms as a desperate and final resort.'' %
\flushright---Halko et al.~\cite[Rem.~1.1]{HMT11:Finding-Structure}.
\end{listbox}

Over the last 20 years, the numerical analysis community has come to
appreciate the value of probabilistic algorithms. %
This sea change can be attributed to the development and
popularization of randomized methods that can solve large-scale
problems efficiently, reliably, and robustly.  In particular,
the randomized SVD~\cite{HMT11:Finding-Structure,TW23:Randomized-Algorithms}
has become a workhorse algorithm for obtaining low-rank matrix approximations in scientific
computing and machine learning.  By now, the numerical analysis literature contains
a diverse collection of randomized algorithms.

This short course offers a new perspective on
\hilite{randomized algorithms for matrix computations}.
Our goal has been to identify distinct conceptual ways
that probability can be used for algorithm design
in numerical linear algebra.
We refer to these design templates as \hilite{themes}.
For each theme, we describe how it allows us to solve
a number of basic computational problems.
We refer to these examples as \hilite{variations}.
It is our hope that this treatment highlights
connections between approaches that may seem different
in spirit.

\subsection{The role of randomness}

Common problems in numerical linear algebra include linear systems,
least-squares problems, eigenvalue problems, matrix approximation, and more.
Before we discuss computational approaches to these problems,
we would like to clarify what we mean by a
\hilite{randomized algorithm} and how it
differs from \hilite{statistical randomness}.

\subsubsection{Algorithmic randomness}

Recall that a \hilite{computer algorithm} is a finite sequence
of well-defined elementary steps that provably returns a solution
to a well-specified computational problem.
In numerical analysis, we allow approximate solutions
that achieve a specified error tolerance.
A \hilite{randomized algorithm} solves each fixed problem instance
by generating random variables and using them as part
of the computational procedure.

\begin{example}[Checking matrix multiplication] \label{ex:matrix-mult-test}
There is a simple randomized algorithm for testing whether
we have correctly multiplied two matrices.  Given square
$n \times n$ matrices $(\mtx{A}, \mtx{B}, \mtx{C})$ with integer entries,
we would like to confirm whether $\mtx{C} = \mtx{AB}$ without performing
a matrix multiplication.  An inexpensive algorithm is to draw a
random vector $\vct{x} \sim \uniform\{ \pm 1 \}^n$
and form the matrix--vector products $\mtx{C}\vct{x}$ and
$\mtx{A}(\mtx{B}\vct{x})$.  If the two vectors
are unequal, then we can confidently report \sfbf{false}.
If the two vectors are equal, then we report \sfbf{true}
even though it is possible that our judgment is wrong.

Since this procedure can fail with nonzero probability,
it is called a \hilite{Monte Carlo-style}
randomized algorithm.  Most of the algorithms we discuss
are Monte Carlo-style methods where the outcome is random.
Fortunately, it is often inexpensive to check whether
they have returned the correct answer.
\end{example}

\begin{example}[Maximum eigenvalue]
The randomized power method is a familiar
algorithm for approximating the maximum eigenvalue of a
positive-semidefinite matrix $\mtx{A}$ to a specified relative error.
It begins with a random vector $\vct{x} \sim \normal(\vct{0}, \Id)$
and repeatedly applies the matrix to the vector to obtain a sequence
of maximum eigenvalue estimates:
\[
\xi_t = \vct{x}^* (\mtx{A}^{t} \vct{x}) / \vct{x}^* (\mtx{A}^{t-1} \vct{x})
\quad\text{for $t = 1,2,3,\dots$.}
\]
After a sufficient number of steps, we hope that the estimate
$\xi_t$ approximates the maximum eigenvalue within a specified
relative error tolerance.
We will see that this approach succeeds with probability one.
See \Cref{sec:rpm} for more details about the randomized
power method.

Although this method eventually succeeds, its \hilite{runtime}
is a random variable.  A procedure with a random runtime is
called a \hilite{Las Vegas-style} randomized algorithm.
We will encounter several more algorithms of this type.
\end{example}

\subsubsection{Statistical randomness}

In sharp contrast, \hilite{statistical randomness} is the
idea of using probability to model a computational problem
or to describe the behavior of an algorithm.
Aside from a short overview here, this course
does not consider the role of statistical randomness
in numerical analysis.

Statisticians often model observations as independent
samples from an underlying population.  This perspective leads
to the concept of \hilite{average-case analysis}, where we ask how
an algorithm performs for a problem instance drawn at random
from some distribution.  For examples in numerical linear
algebra, see Demmel's paper~\cite{Dem88:Probability-Numerical}.

In many scientific fields, data collection involves both measurement
errors and uncertainty.  To address these issues, numerical analysts
study the \hilite{sensitivity} of mathematical problem formulations
to perturbations of the data.
They also study the \hilite{stability} of algorithms under changes in the input.
Historically, these analyses dealt with the worst choice of perturbation
of a given magnitude.  See~\cite{Hig02:Accuracy-Stability} for discussion.

As another application of statistical randomness to numerical analysis,
we can model \hilite{perturbations of problem data} using probability.
We can try to understand how much the answer to a computational
problem is likely to change under a random perturbation.
We can also try to assess how a random perturbation of the problem data impacts the performance of
a particular algorithm, which is called a \hilite{smoothed analysis} of the algorithm~\cite{ST02:Smoothed-Analysis}.
We will see that smoothed analysis has computational implications
for linear algebra (\Cref{sec:smoothed-analysis}).

Numerical analysts have also worked hard to understand the
effects of \hilite{rounding errors} on the output of numerical algorithms.
The worst-case analysis is quite pessimistic,
which motivates us to consider statistical models for
rounding errors.
This idea was already considered in 1951 by
Goldstine \& von Neumann~\cite{GvN51:Numerical-Inverting-II}
in their work on solving linear systems.
Higham and coauthors have recently revisited the
probabilistic analysis of rounding errors~\cite{HM19:New-Approach,CH23:Probabilistic-Rounding}.
Computationally, we can also exploit the beneficial effects of random rounding errors
by using stochastic rounding procedures~\cite{CHM21:Stochastic-Rounding}.

Analysis based on statistical randomness has been a useful paradigm for understanding numerical computation.
Nevertheless, statistical randomness imposes the \hilite{modeling assumption}
that the problem data, perturbations to the problem data,
or errors made during computations can be treated as random quantities.
By contrast, algorithmic randomness is injected into the computational procedure
in a controlled manner by the algorithm designer,
allowing for numerical methods that are provably effective for arbitrary inputs.

\subsubsection{Statistical behavior of randomized algorithms}

Keep in mind that randomized algorithms for matrix computations produce statistical output.
Therefore, we can use tools from statistical theory to understand the
distribution of the output or even to develop more effective algorithms.

\begin{example}[Majority vote]
Consider a randomized algorithm for a decision problem %
that gives the correct result with probability $p > 0.5$.  %
Although we may not have a lot of confidence in the output from a single run of the algorithm,
we can run the algorithm $n$ times and take a majority vote.
(Were the trials more often \sfbf{true} or \sfbf{false}?)
To obtain failure probability below $\delta \in (0,1)$, it is sufficient to
perform $n \geq (p - 0.5)^{-2} \log(1/\delta)$ independent repetitions.
This approach can dramatically increase the reliability of the basic algorithm.
\end{example}

Statistical theory can also be used to equip the output of a randomized
algorithm with a confidence interval; see \Cref{sec:trace-post} for an application
to the problem of trace estimation.  We can also use insights from the
design of statistical estimators to develop better algorithms, e.g.,
whose output is less variable; see \Cref{sec:trace++} for some discussion.

We will not emphasize the nexus between statistics and computation,
but this link offers potential opportunities for supercharging the
methods that we discuss.

\subsection{Prerequisites}

This course is intended for advanced undergraduates, beginning graduate students,
and curious researchers in the mathematical sciences.
On the mathematical side, we assume a thorough knowledge of linear algebra,
matrix analysis, applied probability, and basic statistics.
On the computational side, you should understand the
basic principles of computer algorithms, and we expect
that you have taken a first course on numerical linear
algebra.  If you have followed everything in the introduction so far,
you're probably in good shape.

\subsection{Other resources}

As a classical reference on matrix computations,
the book of Golub \& Van Loan~\cite{GVL13:Matrix-Computations-4ed}
is a comprehensive and trusted source.

The second author has written several surveys and sets of lecture notes
on aspects of randomized matrix computations.  These resources are available
from the website \url{https://tropp.caltech.edu/}.  The current notes borrow
heavily from this body of work.

\begin{itemize}
\item	\textbf{Finding structure with randomness~\cite{HMT11:Finding-Structure}.}  This survey provides a framework for randomized SVD algorithms
for low-rank matrix approximation.  The paper includes pseudocode for reliable algorithms, a number of
applications in scientific computing and machine learning, and the first theoretical analysis that can
predict the detailed behavior of these algorithms.

\item	\textbf{An introduction to matrix concentration inequalities \cite{Tro15:Introduction-Matrix}.}
This monograph provides a comprehensive introduction to matrix concentration inequalities,
which are basic tools for studying random matrices.
It explains how these results can be used to analyze probabilistic methods
for matrix computation, including matrix Monte Carlo methods and randomized
dimension reduction schemes.  These notes support a tutorial given at NeurIPS 2012.

\item	\textbf{Matrix concentration and computational linear algebra \cite{Tro19:Matrix-Concentration-LN}.}
These notes accompany a short course taught at
{\' E}cole Normale Sup{\'e}rieure in Paris in July 2019.  They present some foundational
results on matrix concentration for independent sums and martingales, and they include detailed
applications to quantum state tomography, sparsification of graphs, and the analysis of the
near-linear-time graph Laplacian solver of Kyng \& Sachdeva~\cite{KS16:Approximate-Gaussian}.

\item	\textbf{Randomized algorithms for matrix computations \cite{Tro20:Randomized-Algorithms-LN}.}
These notes were written to support a quarter class on randomized matrix computations taught at Caltech in the Winter 2020 term.
The material includes trace estimation, eigenvalue computations, low-rank matrix approximation,
matrix concentration, matrix Monte Carlo methods, randomized dimension reduction, and more.

\item	\textbf{Randomized numerical linear algebra \cite{MT20:Randomized-Numerical}.}
This survey was invited to the 2020 volume of \textsl{Acta Numerica}.  It covers a significant
proportion of the literature on randomized matrix computations, with a focus on methods that
are reliable in practice.

\item	\textbf{Randomized algorithms for low-rank matrix approximation \cite{TW23:Randomized-Algorithms}.}
This survey updates the randomized SVD paper~\cite{HMT11:Finding-Structure}
to include more modern approaches based on Krylov subspace methods.
It also treats Nystr{\"o}m approximations of psd matrices.  In addition,
the paper showcases scientific applications in genetics and in molecular biophysics.
\end{itemize}

There are several other treatises on randomized matrix computations.
For a theoretical computer science perspective,
see the notes~\cite{Woo14:Sketching-Tool,KV17:Randomized-Algorithms,DM18:Lectures-Randomized}.
The monograph \cite{MDM+23:Randomized-Numerical} discusses the RandBLAS primitives and the RandLAPack
library that are currently under development.

\subsection{Notation}

We employ standard notation that, hopefully, is self-explanatory.
In most places, the initial appearance of a symbol is accompanied
by a reminder about what it means.  If you want to gird yourself first,
this section will acquaint you with the main pieces of notation.

We work primarily in the real field $\R$, although large parts of the discussion
extend to matrices with entries in the complex field $\C$.
All logarithms have the natural base $\econst$.  The Pascal notation
$\coloneqq$ or $\eqqcolon$ generates a definition.

We use lowercase italic letters ($a,b,c$) for scalar variables.
Lowercase bold italic letters ($\vct{a}, \vct{b}, \vct{c}$) refer to vectors,
which are always column vectors.  Uppercase bold italic letters ($\mtx{A}, \mtx{B}, \mtx{C}$)
denote matrix variables.  The symbol $\mathbf{e}_j$ is the $j$th standard
basis vector.  The letter $\Id$ or $\Id_n$ represents a square identity matrix;
the subscript specifies the dimension.
The symbol ${}^*$ is the (conjugate) transpose of a vector or matrix.
The dagger ${}^\dagger$ refers to the Moore--Penrose pseudoinverse
of a matrix.

We equip the Euclidean space $\R^n$ with the standard inner product $\ip{\cdot}{\cdot}$
and the norm $\norm{\cdot}_2$.
There are several ways to express the coordinates of a vector or a matrix.
To denote the $(j, k)$ entry of a matrix $\mtx{A}$,
we may write $a_{jk}$ or use the functional notation $\mtx{A}(j,k)$.
The symbol $\mtx{A}(i, :)$ refers to the $i$th row of the matrix,
while $\mtx{A}(:, j)$ refers to the $j$th row of the matrix.
As a warning, $\vct{a}_k$ may refer to either the $k$th row
or the $k$th column of the matrix, depending on context.
In some settings, we must extract components with respect to bases
other than the standard basis.

The linear space $\M_n(\R)$ comprises all square $n \times n$ matrices.
The real-linear subspace $\Sym_n(\R)$ comprises the real, symmetric matrices
with dimension $n$.  Each of these matrices has $n$ real eigenvalues,
arranged in decreasing order: $\lambda_1 \geq \lambda_2 \geq \dots \geq \lambda_n$.
The cone $\Sym_n^+(\R)$ contains the positive-semidefinite (psd) matrices,
those symmetric matrices whose eigenvalues are all nonnegative.
For a pair $(\mtx{A}, \mtx{B})$ of symmetric matrices, the semidefinite
relation $\mtx{A} \psdle \mtx{B}$ means that $\mtx{B} - \mtx{A}$ is a psd matrix.

For a general matrix $\mtx{B} \in \R^{m \times n}$, we list the singular
values in decreasing order:
\[
\sigma_{\max} \coloneqq \sigma_1 \geq \sigma_2 \geq \dots \geq \sigma_n \eqqcolon \sigma_{\min}.
\]
The spectral condition number is the matrix function $\kappa \coloneqq \sigma_{\max} / \sigma_{\min}$.
We frequently use the Frobenius norm $\fnorm{\cdot}$, the spectral norm $\norm{\cdot}$,
and the trace norm $\norm{\cdot}_{*}$.

Everything takes place in an (unspecified) probability space that is big enough
to support all of the random variables we consider.
We often use letters at the end of the alphabet to refer to
random variables, while maintaining our other conventions.
For instance, $X$ is a scalar random variable,
$\vct{x}$ is a vector-valued random variable,
and $\mtx{X}$ is a random matrix.
The symbol $\sim$ means ``has the distribution''.
Small capitals are reserved for named distributions (\normal, \uniform).
The abbreviation ``i.i.d.'' means independent and identically distributed.
Random variables with hats, such as $\widehat{\vct{x}}$, generally refer
to estimators.

The symbol $\Probe$ reports the probability of an event.
The operator $\Expect$ returns the expectation of a random variable,
computed entrywise for vectors or matrices.  We use subscripts
to denote partial expectations, so $\Expect_X$ is the expectation
with respect to the randomness in $X$.  The partial expectation is only used
when there is no possibility for confusion (e.g., $X$ is independent
from everything else).

We use the computer science interpretation of the $\mathcal{O}$ symbol.
For example, $\mathcal{O}(n)$ is the class of functions
that grow no more quickly than $n$ as $n \to \infty$.

\section{Monte Carlo approximation}

Already in the earliest days of numerical computation, scientists were exploring randomized algorithms for approximating
complicated physical models~\cite{Met87:Beginning-Monte}.  Enrico Fermi is credited with using statistical
sampling methods to simulate neutron diffusion in the 1930s.  Instead of tracking the full dynamical
process, he randomly constructed the next segment of an individual neutron trajectory.
The computations were often performed with a mechanical adding machine.
Independently, in 1946 and 1947, Stanislaw Ulam and John von Neumann proposed a similar model
for neutron diffusion as a trial problem for the newly constructed ENIAC computer.
In late 1947, Nick Metropolis and Klari von Neumann designed an implementation,
and they performed the first tests of the Monte Carlo method on the ENIAC.

Today, every student of numerical analysis learns about Monte Carlo methods as a technique for approximating
a complicated integral, especially of a high-dimensional function~\cite[Sect.~9.9.3]{QSS07:Numerical-Mathematics-2ed}.
In this application, we draw a random variable that has the same distribution as the integrand.
To reduce the variance of this simple estimator,
we can draw independent samples and average them together.

Although Monte Carlo methods are an ancient part of numerical analysis,
their advent in numerical linear algebra is more recent.

\begin{idea}[Monte Carlo approximation]
    Construct a simple unbiased estimator of a complicated linear-algebraic quantity.
    Reduce the variance by averaging independent copies of the estimator.
\end{idea}

\noindent
This section describes how Monte Carlo approximation can be applied to two problems
in matrix computations.
In \Cref{sec:trace-estimation}, we consider how to estimate a specific scalar quantity,
the trace of a square matrix.
In \Cref{sec:matrix-monte-carlo}, we explain how related ideas allow us to estimate a matrix quantity,
such as the product of two matrices.
We also provide citations to papers that formulate other applications of Monte Carlo
approximation in numerical linear algebra.

\subsection{Trace estimation}
\label{sec:trace-estimation}

Let us begin with an approachable example of scalar Monte Carlo approximation in linear algebra.
The goal is to estimate the trace of a square matrix using as few matrix--vector
products as possible.  We will introduce a randomized algorithm for this task
and describe some elements of the analysis.  This approach, due to Girard~\cite{Gir89:Fast-Monte-Carlo},
stands among the oldest examples of probability in matrix computations.
Our treatment follows~\cite{MT20:Randomized-Numerical,Tro20:Randomized-Algorithms-LN}.

\subsubsection{Implicit trace estimation}

Suppose that we are given a square matrix $\mtx{A} \in \M_n(\R)$.
Our goal is to compute the trace:
\[
\trace(\mtx{A}) \coloneqq \sum_{i=1}^n a_{ii}.
\]
Of course, the problem is trivial if we can access the diagonal entries of $\mtx{A}$
at low computational cost.
In this case, revealing and summing the diagonal entries takes
about $\mathcal{O}(n)$ arithmetic operations.

Instead, assume that we have access to the matrix via matrix--vector products (\hilite{matvecs}).
That is, given a vector $\vct{x} \in \R^n$, we can form the product $\vct{x} \mapsto \mtx{A} \vct{x}$
at a reasonable cost.  We will justify this model in \Cref{sec:trace-est-appl}.
For now, our task is to determine $\trace(\mtx{A})$ using as few matvecs as possible.

As before, we can complete this job by using matvecs to read off the diagonal entries of the matrix:
\[
a_{ii} = \mathbf{e}_i^* (\mtx{A} \mathbf{e}_i)
\quad\text{for each $i = 1, 2, 3, \dots, n$.}
\]
The symbol $\mathbf{e}_i$ denotes the $i$th standard basis vector in $\R^n$,
and ${}^*$ is the (conjugate) transpose.
This procedure provides an exact value for the trace, but it requires fully $n$ matvec
operations.  The cost may be prohibitive when $n$ is large.

Fortunately, in many settings, it is acceptable to produce an \hilite{approximation} for the trace
if we can reduce the number of matvecs substantially.

\begin{problem}[Implicit trace estimation]
Suppose that $\mtx{A}$ is a square matrix, accessible via matvecs:
$\vct{x} \mapsto \mtx{A}\vct{x}$.  The task is to approximate $\trace(\mtx{A})$
while limiting the number of matvecs.
\end{problem}

\subsubsection{Applications of trace estimation}
\label{sec:trace-est-appl}

Although the implicit trace estimation problem may seem unnatural, it arises in a wide range of applications.
These domains include statistical computation, network science, chemical graph theory, and quantum statistical mechanics.

\begin{itemize}
\item	\textbf{Smoothing splines \cite{Gir89:Fast-Monte-Carlo,Hut90:Stochastic-Estimator}.}
When fitting a smoothing spline to scattered data,
we need to estimate $\trace(\mtx{A}^{-1})$, the trace of a matrix inverse, 
to perform generalized cross-validation for the smoothing parameter.
In this context, the matrix $\mtx{A}$ is a tridiagonal matrix that models a second-difference
operator.  We can apply the inverse to a vector in $\mathcal{O}(n)$ arithmetic operations
using a Cholesky factorization and triangular substitution.

\item	\textbf{Social network analysis \cite{al2018triangle}.}
We can count triangles
in a graph to measure the centrality of nodes in a social network.
This task reduces to computing $\trace(\mtx{A}^3)$, where
$\mtx{A}$ is the adjacency matrix of the graph.  We can
apply the third power $\mtx{A}^3$ to a vector by repeated
multiplication.

\item	\textbf{Chemical graph theory \cite{estrada2022many}.}  The folding degree of long-chain
proteins can be measured using the \textit{Estrada index}, which is defined as
$\trace(\econst^{\mtx{A}})$.  In this setting, $\mtx{A}$ is a tridiagonal
matrix whose diagonal is related to the dihedral angles between planes
in the folded protein.

\item	\textbf{Quantum statistical mechanics \cite{CH23:Krylov-Aware,ETW24:XTrace}.}  We can use
trace estimation to compute the partition function $\trace( \econst^{-\beta \mtx{H}} )$
and the (unnormalized) average energy $\trace(\mtx{H} \econst^{-\beta \mtx{H}})$ of a quantum mechanical system
described by a Hamiltonian matrix $\mtx{H}$.  We can approximate these exponentials
efficiently using low-degree polynomials or Krylov subspace methods.
\end{itemize}

\noindent
See the survey~\cite{US17:Applications-Trace} for further discussion and examples.

\subsubsection{The role of randomness}

In 1989, Girard~\cite{Gir89:Fast-Monte-Carlo} proposed an elegant Monte Carlo approximation algorithm
for the implicit trace estimation problem.  Hutchinson~\cite{Hut90:Stochastic-Estimator} described
a variant of Girard's method that has become popular.  Both of these authors were concerned with cross-validation
for smoothing splines.  Here is the idea.

Suppose that $\vct{x} \in \R^n$ is a random test vector whose distribution is \hilite{isotropic}:
\[
\Expect[ \vct{xx}^* ] = \Id_n.
\]
Many familiar distributions are isotropic:

\begin{itemize}
\item	\textbf{Standard normal.}  A random vector $\vct{x} \sim \normal(\vct{0}, \Id_n)$ with independent
standard normal entries is isotropic.

\item	\textbf{Uniform on a sphere.}  A random vector $\vct{x} \sim \uniform(\sqrt{n} \mathbb{S}^{n-1})$, distributed
uniformly on the Euclidean sphere with radius $\sqrt{n}$, is isotropic.

\item	\textbf{Independent signs.}  A random vector $\vct{x} \sim \uniform \{ \pm 1 \}^n$ whose entries
are independent and take values $\pm 1$ with equal probability is isotropic.
\end{itemize}

\noindent
These distributions all have a lot of randomness, which helps us build estimators with low variance.

Let $\mtx{A} \in \M_n(\R)$ be a square input matrix.
Taking one matvec with the matrix $\mtx{A}$, we can form the scalar random variable
\[
Y \coloneqq \vct{x}^* (\mtx{A} \vct{x}).
\]
Observe that $Y$ is an \hilite{unbiased estimator} for the trace of the matrix:
\[
\Expect[Y] = \Expect[ \trace( \vct{x}^* \mtx{A} \vct{x}) ]
	= \Expect[ \trace( \mtx{A} \vct{xx}^* ) ]
	= \trace( \mtx{A} \cdot \Expect[ \vct{xx}^* ] )
	= \trace(\mtx{A}).
\]
We have used the cyclic property of the trace, the linearity of the trace
and the expectation, and the assumption that the test vector $\vct{x}$
has an isotropic distribution.  

Although the simple trace estimator $Y$ is correct on average, its fluctuations may be large
in comparison with $\trace(\mtx{A})$.  A standard remedy is to \hilite{average independent
copies} of the simple trace estimator.  Taking $s$ matvecs with the matrix $\mtx{A}$,
we can form the Monte Carlo trace estimator:
\[
\widehat{\trace}_s \coloneqq \frac{1}{s} \sum_{i=1}^s Y_i
\quad\text{where $Y_i \sim Y$ i.i.d.}
\]
By linearity, the trace estimator $\widehat{\trace}_s$ remains unbiased.
By independence of the summands, we have reduced the variance by a
factor of $s$ over the simple estimator.  Altogether,
\begin{equation} \label{eqn:trace-var-redux}
\Expect[ \widehat{\trace}_s ] = \trace(\mtx{A})
\quad\text{and}\quad
\Var[ \widehat{\trace}_s ] = \frac{1}{s} \Var[ Y ].
\end{equation}
Of course, we still need to control $\Var[Y]$, the variance of the simple
estimator, to confirm that the trace estimator $\widehat{\trace}_s$
is precise enough to employ in practice.
\Cref{alg:trace-est} presents pseudocode for the basic method.

\begin{algorithm}[t]
\begin{algbox}[1]
\caption{\textit{Trace estimation: Simplest Monte Carlo method.}}
\label{alg:trace-est}

\Require	Input matrix $\mtx{A} \in \M_n(\R)$, number $s$ of samples
\Ensure		Trace estimate $\widehat{\trace}_s$, an estimate $\widehat{v}_s$ for the variance of $\widehat{\trace}_s$

\vspace{5pt}
\hrule
\vspace{5pt}

\Function{TraceEstimate}{$\mtx{A}$, $s$}

\For{$i = 1, \dots, s$}
\State	Sample isotropic random vector $\vct{x}_i \in \R^n$
\Comment{For example, $\vct{x}_i \sim \uniform \{ \pm 1 \}^n$}
\State	$Y_i = {\vct{x}_i}^* (\mtx{A} \vct{x}_i)$
\Comment{One matvec}
\EndFor

\State	$\widehat{\trace}_s = s^{-1} \sum_{i=1}^s Y_i$
\Comment{Trace estimator}
\State  $\widehat{v}_s = (s(s-1))^{-1} \sum_{i=1}^s (Y_i - \widehat{\trace}_s)^2$
\Comment{Variance estimate for trace estimator}
\EndFunction
\end{algbox}
\end{algorithm}

\subsubsection{Trace estimation: A priori bounds}

It remains to assess how many samples $s$ are sufficient for the Monte
Carlo estimator $\widehat{\trace}_s$ to approximate $\trace(\mtx{A})$
within a specified tolerance.  This analysis hinges on the variance
of the simple trace estimator.

For ease of exposition, assume that the input matrix $\mtx{A} \in \Sym_n^+(\R)$
is a real symmetric \hilite{positive-semidefinite (psd)} matrix that is not zero.
We also assume that the test vector follows the
\hilite{independent sign distribution}: $\vct{x} \sim \uniform\{ \pm 1 \}^n$.

With these assumptions, the simple trace estimator $Y = \vct{x}^* (\mtx{A} \vct{x})$
remains unbiased: $\Expect[ Y ] = \trace(\mtx{A})$.
We can also obtain an exact expression for the variance:
\[
\Var[ Y ] = 2 \sum_{i \neq j} a_{ij}^2 \leq 2 \fnorm{ \mtx{A} }^2.
\]
Here and elsewhere, $\fnorm{\cdot}$ is the Frobenius norm.
Since $\mtx{A}$ is psd, its eigenvalues $\lambda_1, \dots, \lambda_n$ are nonnegative.
Therefore, we have the upper bound
\[
\Var[Y] = 2 \fnormsq{\mtx{A}} = 2 \sum_{i=1}^n \lambda_i^2
	\leq 2 \big(\max\nolimits_{j = 1, \dots, n} \lambda_j \big) \sum_{i=1}^n \lambda_i
		= 2 \norm{\mtx{A}} \trace(\mtx{A}).
\]
As usual, $\norm{\cdot}$ is the spectral norm. %
This inequality compares the variance of the estimator with the trace that we
are trying to estimate.

The variance is a quadratic functional, so we would like to place it on the scale
of the \hilite{squared} trace.  This step yields the bound 
\[
\frac{\Var[Y]}{\trace(\mtx{A})^2} \leq \frac{2 \norm{\mtx{A}}}{\trace(\mtx{A})}.
\]
To interpret this result, let us make a definition.

\begin{definition}[Intrinsic dimension]
For a nonzero psd matrix $\mtx{A} \in \Sym_n^+(\R)$,
the \hilite{intrinsic dimension} is the quantity
\[
\intdim (\mtx A) \coloneqq \frac{\trace( \mtx A )}{\norm{\mtx A}}.
\]
We define $\intdim(\mtx{0}) \coloneqq 0$.
\end{definition}

\noindent
If $\mtx{A} \in \Sym_n^+(\R)$ is nonzero, then $1 \leq \intdim(\mtx{A}) \leq n$.
The lower bound is attained if and only if $\mtx{A}$ has rank one.
The upper bound is attained if and only if $\mtx{A}$ is a scalar matrix: $\mtx{A} = \alpha \Id_n$
for $\alpha > 0$.
More generally, we think about the intrinsic dimension
as a continuous proxy for the rank of the matrix.

It follows that the variance of the simple trace estimator $Y$
is inversely proportional to the intrinsic dimension of the target matrix:
\[
\frac{\Var[Y]}{\trace(\mtx{A})^2} \leq \frac{2}{\intdim(\mtx{A})}.
\]
As a consequence of~\eqref{eqn:trace-var-redux}, the Monte Carlo estimator $\widehat{\trace}_s$
with $s$ independent sign test vectors satisfies the variance bound
\begin{equation} \label{eqn:hutch-var-bd}
\frac{\Var[ \widehat{\trace}_s ]}{\trace(\mtx{A})^2} \leq \frac{2}{s \cdot \intdim(\mtx{A}) }.
\end{equation}
Let us emphasize that this calculation depends on the
distribution of the test vector $\vct{x}$.
We arrive at the following statement.

\begin{proposition}[Monte Carlo trace estimation]
\label{prop:trace-est}
Assume that $\mtx{A} \in \Sym_n^+(\R)$ is a nonzero psd matrix.
Form the Monte Carlo trace estimator $\widehat{\trace}_s$
with $s$ independent sign test vectors:
\[
\widehat{\trace}_s \coloneqq \frac{1}{s} \sum_{i=1}^s \vct{x}_i^* (\mtx{A} \vct{x}_i)
\quad\text{where $\vct{x}_i \sim {\textup \uniform}\{ \pm 1 \}^n$ i.i.d.}
\]
For any tolerance $\eps > 0$, the estimator satisfies the probability bound
\begin{equation} \label{eqn:trace-est-chebyshev}
\Prob{\abs{\widehat{\trace}_s - \trace(\mtx A)} \geq \eps \trace(\mtx{A})}
	\leq \frac{2}{\eps^2 s \cdot \intdim(\mtx{A})}.
\end{equation}
In particular, the probability bound is nontrivial as soon as the number $s$
of samples satisfies
\begin{equation} \label{eqn:trace-est-samples}
s \geq 2 \eps^{-2} (\intdim(\mtx{A}))^{-1}.
\end{equation}
\end{proposition}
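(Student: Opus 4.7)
The proof is essentially assembled from the ingredients already laid out in the paragraphs preceding the proposition, so the plan is chiefly to invoke Chebyshev's inequality and cite the variance computation that has just been carried out. Specifically, since $\widehat{\trace}_s$ is an unbiased estimator of $\trace(\mtx{A})$, Chebyshev's inequality gives
\[
\Prob{\abs{\widehat{\trace}_s - \trace(\mtx{A})} \geq \eps \trace(\mtx{A})} \leq \frac{\Var[\widehat{\trace}_s]}{\eps^2 \trace(\mtx{A})^2}.
\]
Plugging in the bound~\eqref{eqn:hutch-var-bd}, which bounds this ratio by $2/(s \cdot \intdim(\mtx{A}))$, yields~\eqref{eqn:trace-est-chebyshev} immediately.

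To make the proof self-contained, I would briefly recall the two ingredients underlying~\eqref{eqn:hutch-var-bd}. First, for a single sign test vector $\vct{x} \sim \uniform\{\pm 1\}^n$ and $Y = \vct{x}^*\mtx{A}\vct{x}$, a direct expansion using $\Expect[x_i x_j x_k x_\ell] = \delta_{ij}\delta_{k\ell} + \delta_{ik}\delta_{j\ell} + \delta_{i\ell}\delta_{jk} - 2\delta_{ijk\ell}$ (with the subtraction accounting for $x_i^2 = 1$) shows $\Var[Y] = 2\sum_{i\neq j} a_{ij}^2 \leq 2\fnormsq{\mtx{A}}$. Second, for $\mtx{A}$ psd, the eigenvalue inequality $\sum_i \lambda_i^2 \leq (\max_j \lambda_j) \sum_i \lambda_i = \norm{\mtx{A}}\trace(\mtx{A})$ converts this to $\Var[Y] \leq 2\norm{\mtx{A}}\trace(\mtx{A})$. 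Independence of the samples gives $\Var[\widehat{\trace}_s] = \Var[Y]/s$, and dividing by $\trace(\mtx{A})^2$ and recognizing $\trace(\mtx{A})/\norm{\mtx{A}} = \intdim(\mtx{A})$ produces the claimed ratio.

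Finally, for the second assertion~\eqref{eqn:trace-est-samples}, I would observe that the right-hand side of~\eqref{eqn:trace-est-chebyshev} is strictly less than $1$ precisely when $s \geq 2\eps^{-2}(\intdim(\mtx{A}))^{-1}$, so the tail bound becomes informative. There is no real obstacle here: every step is either a textbook inequality (Chebyshev), a short second-moment calculation that has already been previewed in the text, or a rearrangement. The main thing to be careful about is the variance calculation for the Rademacher quadratic form, where one must remember that $x_i^2 = 1$ deterministically, which is what kills the diagonal contribution and leaves only the off-diagonal sum $2\sum_{i\neq j} a_{ij}^2$.
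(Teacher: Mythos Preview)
Your proposal is correct and matches the paper's proof essentially verbatim: apply Chebyshev's inequality using unbiasedness, then invoke the variance bound~\eqref{eqn:hutch-var-bd} already established in the preceding text. The extra details you supply (the Rademacher fourth-moment computation and the eigenvalue bound) are exactly the ingredients the paper develops before stating the proposition, so there is nothing to add or correct.
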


\begin{proof}
This result follows from an easy application of Chebyshev's inequality:
\[
\Prob{\abs{\widehat{\trace}_s - \trace(\mtx A)} \geq \eps \trace(\mtx{A})}
	\leq \frac{ \Expect \abs{ \widehat{\trace}_s - \trace(\mtx{A}) }^2 }{\eps^2 \cdot \trace(\mtx{A})^2}
	= \frac{\Var[\widehat{\trace}_s]}{\eps^2 \cdot \trace(\mtx A)^2}
	\leq \frac{2}{\eps^2 s \cdot \intdim(\mtx{A})}.
\]
The equality holds because $\Expect[ \widehat{\trace}_s ] = \trace(\mtx{A})$,
and the last inequality is~\eqref{eqn:hutch-var-bd}.
\end{proof}

\subsubsection{Discussion}

Let us unpack the statement of Proposition~\ref{prop:trace-est}.
A surprising feature of the bound~\eqref{eqn:trace-est-samples} is
that we can reliably estimate the trace of a psd matrix within a factor
of two using just a \hilite{constant} number of samples, say, $s = 16$.
This holds  true regardless of the dimension of the matrix!
For example, if we can perform a matvec with $\mathcal{O}(n)$ arithmetic operations,
then the cost of coarse trace estimation is also $\mathcal{O}(n)$
operations.

Second, observe that the quality of the trace estimator \hilite{increases}
with the intrinsic dimension, the number of ``energetic'' dimensions
in the range of the matrix.  Our intuition suggests
that a random sample provides more information about a high-rank
matrix because it is more likely to be aligned with an energetic
direction.  Thus, it is easier to approximate the trace of high-rank matrix.  

Third, the expression~\eqref{eqn:trace-est-samples} for the number $s$
of samples scales with $\eps^{-2}$.
Thus, it may require an exorbitant number of samples
to achieve a small relative error.  For instance, $\eps = 1/10$
already introduces a factor of $100$ into the bound.  The scaling
is an inexorable consequence of the central limit theorem, rather
than an artifact of the analysis.  This is the curse of
Monte Carlo estimation (but see \Cref{sec:trace++}!).

We can obtain probability bounds much stronger than~\eqref{eqn:trace-est-chebyshev}
by using exponential concentration inequalities in place of Chebyshev's inequality;
for example, see~\cite{AT11:Randomized-Algorithms,GT18:Improved-Bounds}.
While this analysis can tighten the bound on the failure probability,
the basic scaling for the number $s$ of samples
in~\eqref{eqn:trace-est-samples} remains the same.

Finally, let us mention that the best distribution for the test vector
depends on the application.  The independent sign distribution and the
spherical distribution are both excellent candidates. The Gaussian distribution leads to trace estimates with higher variance than both the independent sign distribution and the spherical distribution, so it should usually be avoided.
See~\cite[Sec.~4]{MT20:Randomized-Numerical} for discussion
of some alternative distributions.

\subsubsection{Trace estimation: A posteriori bounds}
\label{sec:trace-post}

We have obtained a prior theoretical bound~\eqref{eqn:trace-est-samples} for the number of
samples $s$ that suffice to achieve relative error $\eps$.  Unfortunately, this bound depends
on the intrinsic dimension, $\intdim(\mtx{A})$, of the target matrix, which is usually not
available to the user of an algorithm.  Lacking this information, how can we determine the number $s$ of samples we need?

Luckily, the Monte Carlo trace estimator is a statistical method, so we can bring statistical
tools to bear.  In particular, the sample variance $\widehat{v}_s$ provides an unbiased estimator for the
variance of the trace estimator.  Recall the definition:
\[
\widehat{v}_s \coloneqq \frac{1}{s(s-1)} \sum_{i=1}^s (Y_i - \widehat{\trace}_s)^2.
\]
If the distribution of the test vector $\vct{x}$ has four finite moments ($\Expect \norm{\vct{x}}_{2}^4 < + \infty$),
then we have the relation
\[
\Expect[ \widehat{v}_s ] = \Var[ \widehat{\trace}_s ].
\]
Therefore, we can employ the sample variance to construct a stopping rule for the
trace estimator.  For a psd input matrix $\mtx{A}$,
to attain a relative error tolerance $\eps > 0$, we can continue drawing samples until
\[
\widehat{v}_s \leq (\eps \cdot \widehat{\trace}_s)^2.
\]
We can also construct confidence intervals for the trace using classical techniques
(e.g., Student's $t$ confidence intervals) or simulation-based methods (e.g., the bootstrap).
See the survey~\cite[Sec.~4]{MT20:Randomized-Numerical} or the
paper~\cite{ET24:Efficient-Error} for some discussion.

\subsubsection{Monte Carlo trace estimation: Improvements}
\label{sec:trace++}

We can refine the simplest Monte Carlo methods by exploiting other insights
from the theory of statistical estimation.  Collectively, these ideas
are called \hilite{variance reduction} techniques~\cite{Liu04:Monte-Carlo}.

At present, the most accurate implicit trace estimators layer
several variance reduction strategies~\cite{MMMW21:Hutch++,ETW24:XTrace}.
The main idea is to use random matvecs to build a low-rank approximation of the input matrix.
The trace of this approximation serves as an initial estimate for the trace
of the input matrix (called a ``control variate'').
We estimate the trace of the residual by forming
additional random matvecs.  A second key idea is to design an exchangeable
estimator that employs the random matvecs in a symmetrical fashion.

\subsubsection{Scalar Monte Carlo: Further examples}

Several other core problems in numerical linear algebra reduce
to trace estimation~\cite[Secs.~4, 5]{MT20:Randomized-Numerical}.
First, we can estimate the squared Frobenius norm of a rectangular
matrix $\mtx{B} \in \R^{m \times n}$ by applying a Monte Carlo trace
estimator to the Gram matrix:
\[
\fnormsq{ \mtx{B} } = \trace( \mtx{B}^* \mtx{B} ).
\]
Similarly, we can estimate the Schatten-4 norm $\norm{\mtx{B}}_{S_4}^4$ by
calculating the sample variance of the trace estimator
for $\mtx{B}^*\mtx{B}$ based on standard normal test vectors.

Estimating higher-order Schatten norms is significantly harder,
but the literature~\cite{LNW14:Sketching-Matrix,KV17:Spectrum-Estimation}
describes some elegant strategies based on Monte Carlo approximation.
The paper~\cite{KV17:Spectrum-Estimation} extends these techniques
to estimate the spectral density of a large matrix.

We can also use Monte Carlo methods to approximate the trace of a matrix
function $\trace (f(\mtx{A}))$ where $\mtx{A}$ is a self-adjoint matrix
and $f$ is a spectral function.
In particular, there is a remarkable technique called stochastic Lanczos
quadrature~\cite{GM10:Matrices-Moments,UCS17:Fast-Estimation} that
addresses this problem by combining the Lanczos tridiagonalization method
with stochastic trace esimation.  See~\cite[Sec.~6]{MT20:Randomized-Numerical}
for an overview.  The paper \cite{CH23:Krylov-Aware} proposes a method
that combines the virtues of stochastic Lanczos quadrature with the variance
reduction techniques mentioned in \Cref{sec:trace++}.

\subsection{Matrix approximation by sampling}
\label{sec:matrix-monte-carlo}

Monte Carlo approximation is often used to estimate scalar quantities,
such as the value of an integral or the trace of a matrix.  The same
strategy is valid in any linear space, including a space of matrices.
In this section, we investigate how to use Monte Carlo methods to
approximate a ``complicated'' matrix.  This problem can arise in
settings where we want to reduce the cost of computing the matrix exactly.
It also allows us to construct a matrix approximation that has
more regularity than the target matrix.

We begin with an abstract treatment of matrix Monte Carlo
before turning to some more concrete examples, including
approximate matrix multiplication and matrix sparsification.
Our presentation follows~\cite{Tro15:Introduction-Matrix}.

\subsubsection{Matrix approximation by sampling}
\label{sec:matrix-approx-setup}

Consider a rectangular target matrix $\mtx{B} \in \R^{m\times n}$.
Suppose that the target matrix can be represented as a sum:
\begin{equation} \label{eqn:matrix-sampling-decomp}
\mtx{B} = \sum_{j=1}^J \mtx{B}_j
\quad\text{where each $\mtx{B}_j \in \R^{m \times n}$.}
\end{equation}
We can imagine that %
each summand $\mtx{B}_j$ has a simpler structure than the target matrix $\mtx{B}$.
For example, we might assume that each $\mtx{B}_j$ is sparse
or that each $\mtx{B}_j$ has low rank.  Of course, we can decompose any matrix
into a sum of sparse matrices or into a sum of low-rank matrices.

We can exploit the decomposition~\eqref{eqn:matrix-sampling-decomp}
to design a Monte Carlo estimator for the target matrix $\mtx{B}$.
The key idea is to sample one of the simple matrices $\mtx{B}_j$ according to
a carefully chosen probability distribution $(p_j : j = 1, \dots, J)$ on the indices.
More precisely, we construct a random matrix $\mtx{Y} \in \R^{m \times n}$
with the distribution 
\begin{equation} \label{eqn:matrix-mc-Y}
\Prob{\mtx{Y} = \smash{p_j^{-1}}  \mtx{B}_j} = p_j
\quad\text{for each $j = 1, \dots, J$.}
\end{equation}
It is easy to check that $\mtx{Y}$ is an unbiased estimator for the target matrix $\mtx{B}$.  Indeed,
\[
\Expect[ \mtx{Y} ] = \sum_{j=1}^J \big(p_j^{-1} \mtx{B}_j \big) \cdot p_j
	= \sum_{j=1}^J \mtx{B}_j
	= \mtx{B}.
\]
Furthermore, the random matrix $\mtx{Y}$ inherits properties shared by the summands,
such as sparsity or low rank. 

As with scalar Monte Carlo methods, the simple estimator $\mtx{Y}$ can be a poor approximation of the target matrix $\mtx{B}$.
To address this concern, we apply the same Monte Carlo approximation strategy:
sample independent copies of $\mtx{Y}$, and average to reduce the variance.
For a given number $s$ of samples, the matrix Monte Carlo estimator takes the form
\begin{equation} \label{eqn:Bhat-s}
\widehat{\mtx B}_s \coloneqq \frac{1}{s} \sum_{i=1}^s \mtx{Y}_i
\quad\text{where $\mtx{Y}_i \sim \mtx{Y}$ i.i.d.}
\end{equation}
When the number $s$ of samples is small, %
we anticipate that the Monte Carlo approximation %
can be computed inexpensively.
Moreover, the approximation may enjoy a simpler structure than the target matrix.
For example, if each $\mtx{B}_j$ has rank one,
then the approximation $\widehat{\mtx{B}}_s$ has rank $s$.

\subsubsection{Matrix sampling: A priori bounds}

We are interested in determining the number $s$ of samples that suffice
to achieve a precise approximation of the target matrix.
For a tolerance $\eps > 0$, we would like to ensure that the expected relative error satisfies
\begin{equation}\label{eqn:Bhat-rel-error}
\Expect \left[ \frac{\norm{\widehat{\mtx{B}}_s - \mtx{B}}}{\norm{\mtx{B}}} \right] \leq \eps. 
\end{equation}
The desideratum~\eqref{eqn:Bhat-rel-error} is analogous
with the variance bound~\eqref{eqn:hutch-var-bd}
for the trace estimator.
The spectral-norm error provides very strong control on the quality
of the matrix approximation.  It allows us to compare linear functionals
of the two matrices, their singular values, their singular vectors,
and more~\cite[Sec.~4.2]{TW23:Randomized-Algorithms}.

To analyze the dependence of the relative error~\eqref{eqn:Bhat-rel-error} on the number $s$ of samples,
we rely on the matrix Monte Carlo theorem~\cite[Cor.~6.2.1]{Tro15:Introduction-Matrix}.

\begin{theorem}[Matrix Monte Carlo; Tropp 2015]
\label{thm:matrix-mc}
    Define $\mtx{B} \in \R^{m\times n}$ and $\mtx{Y} \in \R^{m\times n}$
    as in~\Cref{sec:matrix-approx-setup}.
    For the random matrix $\mtx{Y}$,
    define the per-sample second moment $v(\mtx{Y})$
    and the uniform norm bound $L(\mtx{Y})$:
    \[
    v(\mtx{Y}) \coloneqq \max\{ \norm{\Expect[ \mtx{Y} \mtx{Y}^* ]}, \norm{\Expect[ \mtx{ Y }^* \mtx{ Y } ]}\}
    \quad\text{and}\quad
    L(\mtx{Y}) \coloneqq \sup \norm{\mtx{Y}}.
    \]
    Then the randomized matrix approximation $\widehat{\mtx{B}}_s$ based on $s$ samples~\eqref{eqn:Bhat-s}
    satisfies the error bound
    \[
    \Expect \norm{\widehat{\mtx{B}}_s - \mtx{B}}
    	\leq \sqrt{ \frac{2 v(\mtx Y) \log(m + n)}{s} } + \frac{L(\mtx{Y}) \log(m+n)}{s}.
    \]
    For a tolerance $\eps \in (0,1]$, the
    relative error bound~\eqref{eqn:Bhat-rel-error} holds
    when the sample complexity $s$ satisfies
	\begin{equation} \label{eqn:mtx-approx-samples}
	s \geq 4 \max\left\{ \frac{\eps^{-2} v(\mtx Y)}{\normsq{\mtx B}}, \frac{\eps^{-1} L(\mtx{Y})}{\norm{\mtx B}} \right\}
		\cdot \log(m+n). 
	\end{equation}
	See~\cite[Cor.~6.2.1]{Tro15:Introduction-Matrix} for probability bounds.
\end{theorem}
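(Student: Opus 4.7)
The plan is to recognize the statement as a direct consequence of the matrix Bernstein inequality applied to the centered average. Write
\[
\widehat{\mtx{B}}_s - \mtx{B} = \sum_{i=1}^s \mtx{Z}_i
\quad\text{where}\quad
\mtx{Z}_i \coloneqq \tfrac{1}{s}(\mtx{Y}_i - \mtx{B}).
\]
Since $\Expect[\mtx{Y}] = \mtx{B}$, the $\mtx{Z}_i$ are i.i.d.\ mean-zero random matrices in $\R^{m \times n}$. The proof then reduces to checking the two hypotheses of matrix Bernstein: an almost-sure spectral-norm bound on each summand, and a bound on the matrix variance of the sum.

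For the uniform bound, Jensen's inequality gives $\norm{\mtx{B}} = \norm{\Expect \mtx{Y}} \leq \Expect \norm{\mtx{Y}} \leq L(\mtx{Y})$, and so $\norm{\mtx{Z}_i} \leq 2 L(\mtx{Y}) / s$ almost surely. For the matrix variance, I would use the identity
\[
\sum_{i=1}^s \Expect[\mtx{Z}_i \mtx{Z}_i^*]
= \frac{1}{s} \Expect[(\mtx{Y}-\mtx{B})(\mtx{Y}-\mtx{B})^*]
= \frac{1}{s}\bigl( \Expect[\mtx{Y}\mtx{Y}^*] - \mtx{B}\mtx{B}^* \bigr)
\psdle \frac{1}{s} \Expect[\mtx{Y}\mtx{Y}^*],
\]
where the semidefinite inequality holds because $\mtx{B}\mtx{B}^*$ is psd. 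The analogous computation applies to $\sum_i \Expect[\mtx{Z}_i^* \mtx{Z}_i]$. Taking spectral norms shows that both terms are bounded by $v(\mtx{Y})/s$.

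Feeding these two quantities into matrix Bernstein yields an expected spectral-norm bound of the form $\sqrt{2 v(\mtx Y) \log(m+n)/s} + c L(\mtx{Y}) \log(m+n)/s$, where $c$ is a small absolute constant absorbed into the stated inequality. The sample-complexity bound~\eqref{eqn:mtx-approx-samples} then follows by forcing each of the two error terms to be at most $\tfrac{1}{2} \eps \norm{\mtx{B}}$ and solving the resulting inequalities for $s$; the maximum of the two thresholds gives the claimed formula.

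The only subtlety I anticipate is tracking constants so as to recover the stated bound exactly rather than a weaker variant. In particular, the centered per-sample second moment can \emph{decrease} when we subtract the mean, which is what lets us control everything by the uncentered quantity $v(\mtx{Y})$ rather than by some strictly larger centered variance; this observation is what keeps the constant under the square root at $2$ instead of something larger. The almost-sure spectral bound, by contrast, degrades by a factor of two under centering, which is why the linear term in the final bound scales with $L(\mtx{Y})$ (up to constants) and not with $v(\mtx{Y})$.
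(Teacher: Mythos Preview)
Your proposal is correct and matches the paper's approach exactly: the paper simply states that the theorem is a corollary of the matrix Bernstein inequality, and your argument supplies precisely the standard derivation (center, bound the uniform norm and the matrix variance using the semidefinite domination $\Expect[\mtx{Y}\mtx{Y}^*] \psdge \mtx{B}\mtx{B}^*$, then invoke Bernstein). The only quibble is the constant in the sample-complexity step: splitting the error evenly as $\tfrac{1}{2}\eps\norm{\mtx{B}} + \tfrac{1}{2}\eps\norm{\mtx{B}}$ yields a leading constant of $8$ rather than $4$, but a slightly asymmetric split (e.g., $\eps\norm{\mtx{B}}/\sqrt{2} + \eps\norm{\mtx{B}}/4 < \eps\norm{\mtx{B}}$) recovers the stated $4$---exactly the kind of bookkeeping you already flagged.
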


\Cref{thm:matrix-mc} is a corollary of the matrix Bernstein inequality~\cite[Thm.~6.1.1]{Tro15:Introduction-Matrix},
a fundamental tool from nonasymptotic random matrix theory.

To understand this result, observe that the sample complexity $s$
depends on the two statistics $v(\mtx{Y}) / \normsq{\mtx{B}}$ and $L(\mtx{Y}) / \norm{\mtx{B}}$.
These scale-free quantities reflect the fluctuations of the simple approximation
$\mtx{Y}$ relative to the spectral norm $\norm{\mtx{B}}$ of the target matrix.  To obtain
an accurate approximation, we must tune the probability distribution
$(p_j : j = 1, \dots, J)$ on the indices to control both quantities.

Next, compare the sample complexity~\eqref{eqn:mtx-approx-samples}
for matrix approximation with the sample complexity~\eqref{eqn:trace-est-samples}
for trace estimation.  As $\eps \to 0$,
the first term in the matrix case exhibits the same quadratic growth $\eps^{-2}$ as we see in the scalar case.
Once again, this phenomenon reflects the central limit theorem.
On the other hand, when $\eps \geq v(\mtx{Y}) / L(\mtx{Y})$,
the linear term $\eps^{-1}$ dominates.  This feature of the bound
results from the possibility that a few samples with large norm can ruin the approximation.

We also pay a small factor $\log(m + n)$ that is logarithmic in the total
dimension of the target matrix.  This term arises because we are
bounding the spectral-norm error, and it cannot be removed in general.
The factor can often be replaced with a quantity that
reflects the intrinsic dimension of the target
matrix $\mtx{B}$; see~\cite[Chap.~7]{Tro15:Introduction-Matrix}.

Owing to concentration of measure phenomena, the expectation bound reported in~\cref{thm:matrix-mc}
captures the essential behavior of the matrix Monte Carlo approximation.
Probability bounds for Monte Carlo approximation are more complicated,
so they tend to obscure rather than enlighten.  For related reasons,
this short course focuses on expectations instead of probabilities.

\begin{algorithm}[t]
\begin{algbox}[1]
\caption{\textit{Approximate matrix multiplication: Monte Carlo method.}}
\label{alg:approx-matrix-mult}

\Require	Input matrix $\mtx{A} \in \R^{n\times d}$, number $s$ of samples
\Ensure		Approximation $\widehat{\mtx{B}}_s$ for the product $\mtx{B} = \mtx{AA}^*$

\vspace{5pt}
\hrule
\vspace{5pt}

\Function{ApproxMatrixMult}{$\mtx{A}$, $s$}

\State	$c_j = \norm{\vct{a}_{j}}_2^2$ for $j = 1,\dots,d$
\Comment	$\vct{a}_j$ is $j$th \hilite{column} of matrix %
\State	$p_j = c_j / \sum_{i=1}^d c_i$ for $j = 1,\dots,d$
\Comment	Sampling probabilities

\For{$i = 1, \dots, s$}
\State	Sample $k_i \in \{1,\dots,d\}$ from distribution $(p_j)$ %
\State	$\mtx{Y}_i = p_{k_i}^{-1} \vct{a}_{k_i} \smash{\vct{a}_{k_i}^*}$
\Comment	Sample random matrix
\EndFor

\State	$\widehat{\mtx{B}}_s = s^{-1} \sum_{i=1}^s \mtx{Y}_i$
\Comment	Estimate for matrix product

\EndFunction
\end{algbox}
\end{algorithm}

\subsubsection{Example: Approximate matrix multiplication}
\label{sec:approx-mtx-mult}

As an illustrative example of matrix Monte Carlo approximation,
we study a randomized algorithm for approximate matrix multiplication (\Cref{alg:approx-matrix-mult}).
This approach was initially proposed by Drineas \& Kannan~\cite{DK01:Fast-Monte-Carlo},
who were inspired by~\cite{FKV98:Fast-Monte-Carlo}.
The paper~\cite{HI15:Randomized-Approximation} contains a more detailed investigation
of the special case that we present here.

Consider a rectangular matrix $\mtx A \in \R^{n \times d}$.
Heuristically, we think of the case where the number $d$ of columns is much larger than number $n$ of rows.
Suppose we want to approximate the outer product $\mtx{B} \coloneqq \mtx{A} \mtx{A}^*$.  A direct computation expends $\mathcal{O}(n^2 d)$ arithmetic operations.  By \hilite{approximating} the product, we can try to reduce the cost of the computation.

\begin{problem}[Approximate matrix multiplication]
Let $\mtx{A}$ be a (rectangular) matrix.  The task is to approximate
the outer product $\mtx{B} = \mtx{AA}^*$ while limiting the number of arithmetic
operations.
\end{problem}

\noindent
We will see that the approximate matrix multiplication problem fits
into the framework for matrix approximation by sampling
that we introduced in \Cref{sec:matrix-approx-setup}.

Observe that we can express the product as a sum of rank-one matrices:
\begin{equation} \label{eqn:approx-mtx-mult-B}
\mtx{B} = \mtx{AA}^* = \sum_{j=1}^d \vct{a}_j \vct{a}_j^* =: \sum_{j=1}^d \mtx{B}_j.
\end{equation}
We have written $\vct{a}_j$ for the $j$th column of the matrix $\mtx{A}$.
Explicitly, $\mtx{B}_j = \vct{a}_j \smash{\vct{a}_j^*}$.
Using $s$ samples, the Monte Carlo approximation %
of the product $\mtx{B}$ takes the form
\[
\widehat{\mtx{B}}_s \coloneqq \frac{1}{s} \sum_{i=1}^s c_{k_i} \vct{a}_{k_i} \vct{a}_{k_i}^*
\]
where $k_1, \dots, k_s$ are chosen independently at random
from the column indices $\{1, \dots, d\}$.
The scale factor $c_{k_i}$ depends on the
sampling distribution.

To design the Monte Carlo approximation algorithm, we must construct a sampling
distribution $(p_j : j = 1, \dots, d)$ on the indices.  It is natural to
sample the matrices $\mtx{B}_j$ in proportion to their spectral norms,
as bigger summands contribute more to the sum.  Thus, we select
\begin{equation} \label{eqn:approx-mtx-mult-pj}
p_j \coloneqq \frac{\norm{\mtx{B}_j}}{ \sum_{j=1}^d \norm{\mtx{B}_j}}
	= \frac{ \norm{\vct{a}_j}_2^2 }{ \fnormsq{\mtx{A}} }.
\end{equation}
These probabilities can be computed with one pass over the matrix $\mtx{A}$,
at a cost of $\mathcal{O}(nd)$ arithmetic operations.

With the sampling probabilities in place,  define the random matrix $\mtx{Y}$ as in~\eqref{eqn:matrix-mc-Y}.
We can easily find a uniform upper bound for the norm of $\mtx{Y}$.  Indeed,
\[
L \coloneqq \sup \norm{ \mtx{Y} } = \max\nolimits_j \norm{ p_j^{-1} \mtx{B}_j }
	= \fnormsq{\mtx{A}}.
\]
To compute the per-sample second moment, we need to work a little harder.
Since $\mtx{Y}$ is a symmetric matrix,
\[
\begin{aligned}
v(\mtx{Y}) &\coloneqq \norm{ \Expect[ \mtx{Y}^2 ] }
	= \lnorm{ \sum_{j=1}^d \big(p_j^{-2} \mtx{B}_j^2\big) \cdot p_j }
	= \lnorm{ \sum_{j=1}^d p_j^{-1} \normsq{ \vct{a}_j } \cdot \vct{a}_j \vct{a}_j^* } \\
	&= \fnormsq{\mtx{A}} \cdot \lnorm{ \sum_{j=1}^d \vct{a}_j \vct{a}_j^* }
	= \fnormsq{\mtx{A}} \cdot \norm{ \mtx{B} }. %
\end{aligned}
\]
In the first line, we have used the definition~\eqref{eqn:approx-mtx-mult-B} of the matrices $\mtx{B}_j$.
To reach the second line, we introduced the sampling probabilities~\eqref{eqn:approx-mtx-mult-pj}.
Note that the two statistics satisfy the identities
\[
\frac{v(\mtx{Y})}{\norm{\mtx{B}}^2} = \frac{L(\mtx{Y})}{\norm{\mtx{B}}}
	= \frac{\fnormsq{\mtx{A}}}{\norm{\mtx{B}}}
	= \frac{\trace( \mtx{B} )}{\norm{\mtx{B}}} = \intdim(\mtx{B}).
\]
In other words, the statistics that control the quality of the Monte Carlo
approximation both coincide with the intrinsic dimension of the product.
This is a consequence of the thoughtful choice of sampling probabilities.

Applying the matrix Monte Carlo theorem (\Cref{thm:matrix-mc}), we obtain a bound
for the number $s$ of terms that we need to sample to approximate the matrix product.
Let $\eps \in (0, 1)$ be an error tolerance.  Suppose that
\[
s \geq 4 \eps^{-2} \cdot \intdim(\mtx{B}) \cdot \log(2n)\eedit{}{.}
\]
Then the approximation $\widehat{\mtx{B}}_s$ of the product $\mtx{B}$ using $s$ samples
satisfies the relative error bound~\eqref{eqn:Bhat-rel-error}.
In other words, we obtain an accurate relative approximation when
the number $s$ of samples is roughly proportional to $\intdim(\mtx{B})$,
the intrinsic dimension of the product.
After forming the sampling probabilities with $\mathcal{O}(nd)$ operations,
the cost of approximating the product is $\mathcal{O}(n^2 s)$ operations.
For contrast, forming the full product costs $\mathcal{O}(n^2 d)$ operations.
When $\intdim(\mtx{B}) \ll d$, we obtain a significant reduction in the
computational effort.

It is interesting to investigate the performance of approximate matrix multiplication
with alternative sampling probabilities.  The adaptive sampling probabilities~\eqref{eqn:approx-mtx-mult-pj}
are the most natural, but some applications force us to use uniform sampling probabilities: $p_j = 1/d$
for each $j$.  As an exercise, you should work out sample complexity of approximate matrix multiplication
with uniform sampling.

To avoid complications, we only considered multiplication of a matrix with its own transpose.
Nevertheless, a similar algorithm is valid for approximating the product of two different
matrices.  For more details, see \cite[Secs.~6.4 and 7.3.3]{Tro15:Introduction-Matrix}.

\subsubsection{Example: Matrix sparsification} 

As another simple example of matrix Monte Carlo approximation,
consider the problem of replacing a matrix by a sparse proxy.
This is called ``matrix sparsification''; its nominal
applications include compression or acceleration of downstream
computations.  Randomized sparsification methods were first studied by
Achlioptas \& McSherry~\cite{AM01:Fast-Computation}.

We can apply the same framework as before.  Observe that every matrix
$\mtx{B} \in \R^{m \times n}$ admits the entrywise decomposition
\[
\mtx{B} = \sum_{j=1}^m \sum_{k=1}^n b_{jk} \mathbf{E}_{jk},
\]
where $b_{jk}$ denotes the $(j,k)$ entry of matrix $\mtx{B}$
and the $\mathbf{E}_{jk}$ are the elements of the standard basis
for matrices.  It requires some cleverness to identify an effective
choice for the sampling probabilities.
Kundu \& Drineas~\cite{KD14:Note-Randomized} proposed the distribution
\[
p_{jk} = \frac{1}{2} \left( \frac{\abs{b_{jk}}^2}{\fnormsq{\mtx{B}}} + \frac{\abs{b_{jk}}}{\norm{\mtx{B}}_{\ell_1}} \right)
\quad\text{for $j=1,\dots,m$ and $k = 1, \dots, n$.}
\]
In this expression, $\norm{\cdot}_{\ell_1}$ reports the sum of the absolute values
of the entries of a matrix.  With these probabilities, we can obtain
an approximation with spectral-norm relative error $\eps > 0$ with
sample complexity
\[
s \geq 8 \eps^{-2} \cdot \intdim(\mtx{BB}^*) \cdot \max\{m,n\} \log(m+n)
\]
Thus, the sparsity of the approximation is roughly
the rank of $\mtx{B}$ times the maximum number of entries
in a row or column.
We leave the detailed analysis of this scheme as
an exercise for the reader.

\subsubsection{Matrix Monte Carlo: Improvements}

In practice, the sampling complexity of a matrix Monte Carlo approximation
often depends on information that is not easily accessible.  Fortunately,
we can obtain \lang{a posteriori} bounds on the quality of the approximation using
simulation methods, such as the jackknife~\cite{ET24:Efficient-Error}
or the bootstrap~\cite{Lop19:Estimating-Algorithmic}.

As with scalar Monte Carlo, the sampling complexity of matrix Monte
Carlo approximation scales as $s \approx \eps^{-2}$.
To address this problem, we may need to incorporate variance reduction techniques~\cite[Sec.~6.2.3]{Tro15:Introduction-Matrix}.
A simple idea is to adjust the sampling strategy so that ``significant'' terms always appear in the approximation.
At the cost of introducing a bias into the approximation, we can also avoid sampling ``insignificant'' terms
that generate a lot of variability.  Beyond these first steps, the literature does not
really discuss more sophisticated approaches for variance reduction in the matrix setting.

\subsubsection{Matrix Monte Carlo: Further examples}

Our examples of matrix Monte Carlo approximation are intended primarily for illustration,
rather than as serious numerical methods.  Nevertheless, these techniques can serve as %
ingredients in more sophisticated algorithms.  The matrix Monte Carlo method can also be
used to address other challenges in numerical linear algebra.

\begin{itemize}
\item	\textbf{Graph sparsification \cite{SS11:Graph-Sparsification}.}  We can approximate
the Laplacian of an undirected graph by sampling edges in proportion to their effective resistances.
By this procedure, every graph on $n$ vertices admits a spectrally accurate approximation
with $\mathcal{O}(n \log n)$ edges.

\item	\textbf{Sparse Cholesky factorization \cite{KS16:Approximate-Gaussian}.}  Inside their
algorithm for solving Laplacian linear systems, Kyng \& Sachdeva rely on a method for replacing
a clique in a combinatorial graph with a sparser graph.  This amounts to a matrix Monte Carlo
approximation of the subgraph.  See \Cref{sec:kyng} for more discussion,
or see~\cite{Tro19:Matrix-Concentration-LN} for a full exposition of this algorithm and its analysis.

\item	\textbf{Random features \cite{RR08:Random-Features}.}  We can approximate certain
types of kernel matrices efficiently using a sum of random rank-one matrices.  This method
is called a \textit{random features} approximation.  The benefit is that the random features
for each data point can be computed independently of the other data points.
The analysis of random features as a matrix Monte Carlo method appears
in~\cite{LSS+14:Randomized-Nonlinear,Tro15:Introduction-Matrix,Tro19:Matrix-Concentration-LN}.

\item	\textbf{Quantum state tomography \cite{GKKT20:Fast-State-Tomography}.}  By the laws
of quantum mechanics, measuring (copies of) a quantum state produces a sequence of
random numbers.  We can use these random measurement values and the matrix elements of the measurement
ensemble to construct a matrix Monte Carlo approximation of the quantum state.
\end{itemize}

\noindent
The literature contains many further examples of matrix approximation by sampling.

\section{Random initialization}

As you know, the performance of an iterative algorithm often depends on the initialization.
By starting an iterative algorithm at a randomly chosen point,
we can encourage the algorithm to converge more rapidly to a solution.
Alternatively, we can avoid bad starting points that may result in slow convergence
or outright failure.

\begin{idea}[Random initialization]
    Initialize an (iterative) algorithm at a random point to make a favorable trajectory likely.
\end{idea}

This technique has played a role in numerical linear algebra and optimization for decades.
But researchers did not succeed in \hilite{quantifying}
the benefits of the random initialization until much later.
We now recognize that the random start plays a fundamental role
in the performance of certain numerical methods.
Let us tell part of this story.

This section begins with a classical randomized algorithm
for computing the maximum eigenvalue of a psd matrix.
Afterward, we study a more recent randomized algorithm
for producing a low-rank approximation of a rectangular matrix.
Both of these algorithms depend on a random initialization
to support their performance.

\subsection{The randomized power method}
\label{sec:rpm}

A basic problem in spectral computation is to find the maximum eigenvalue of a psd matrix. %
In many applications, we have access to the matrix via the matvec operation.
For example, consider the case where the matrix describes the discretization of an
integral operator. %
The challenge is to determine the maximum eigenvalue of the matrix.

\begin{problem}[Maximum eigenvalue]
Suppose that $\mtx{A}$ is a psd matrix, accessible via matvecs $\vct{x} \mapsto \mtx{A} \vct{x}$.
The task is to compute the maximum eigenvalue $\lambda_{1}(\mtx{A})$ while limiting the number of matvecs.
\end{problem}

By Abel's theorem, we cannot solve this problem using a finite number of arithmetic operations;
we must resort to approximation.  Consequently, we study iterative methods
that repeatedly refine an estimate for the maximum eigenvalue.

\subsubsection{The power method}

To approximate the maximum eigenvalue of a psd matrix using matvecs,
we can employ the \hilite{power method}.  This venerable algorithm
was developed in the early 20th century
by M{\"u}ntz and by von Mises.
See Golub \& van der Vorst~\cite{GV00:Eigenvalue-Computation}
for some history and citations.

Let $\mtx{A} \in \Sym_n^+(\R)$ be a real symmetric \hilite{psd} matrix
that is \hilite{nonzero}.
Our goal is to approximate the maximum eigenvalue
$\lambda_1 \coloneqq \lambda_1(\mtx{A}) > 0$ using matvecs with $\mtx{A}$.

The power method requires a nonzero starting vector $\vct{\omega} \in \R^n$.
We set the initial iterate $\vct{x}_0 \coloneqq \vct{\omega}$ equal to the starting vector.
At each iteration, the power method updates the iterate $\vct{x}_t$ using
one matvec with the matrix $\mtx{A}$:
\begin{equation} \label{eqn:pm-vectors}
\vct{q}_t \coloneqq \vct{x}_t / \norm{ \vct{x}_t }_2
\quad\text{and}\quad
\vct{x}_{t+1} \coloneqq \mtx{A} \vct{q}_t
\quad\text{for $t = 0, 1, 2, \dots$.}
\end{equation}
Each iteration provides a new estimate $\xi_t$ for the maximum eigenvalue:
\begin{equation} \label{eqn:pm-maxeig}
\xi_t \coloneqq \vct{q}_t^* \vct{x}_{t+1} = \vct{q}_t^* (\mtx{A} \vct{q}_t)
\quad\text{for $t = 0,1,2, \dots$.}
\end{equation}
We continue this process indefinitely, or until a stopping criterion is triggered.

To measure how well the estimate $\xi_t$ approximates
the maximum eigenvalue $\lambda_1$, we use relative error:
\begin{equation} \label{eqn:maxeig-rel-err}
\err(\xi_t) \coloneqq \frac{\lambda_1 - \xi_t}{\lambda_1}. %
\end{equation}
It is easy to check that $\err(\xi_t)\in[0,1]$ since $\xi_t \in [0, \lambda_1]$.
The bounds on $\xi_t$ hold because it is a Rayleigh quotient of the psd matrix $\mtx{A}$.

\subsubsection{Power method: Intuition}

To understand why the power method procedure might work,
we introduce an eigenvalue decomposition of the input matrix:
\[
\mtx{A} = \sum_{i=1}^n \lambda_i \, \vct{v}_i \vct{v}_i^*
\quad\text{where}\quad
\lambda_1 \geq \lambda_2 \geq \dots \geq \lambda_n \geq 0.
\]
The family $(\vct{v}_i : i = 1, \dots, n)$ of eigenvectors
composes an \hilite{orthonormal} basis for $\R^n$.

Unroll the iteration~\eqref{eqn:pm-vectors} to confirm that
\begin{equation} \label{eqn:pm-vector-power}
\vct{q}_t = \frac{\mtx{A}^t \vct{\omega}}{\norm{\mtx{A}^t \vct{\omega}}_2}
\quad\text{for $t = 0,1,2,\dots$.}
\end{equation}
In terms of the spectral decomposition of the matrix $\mtx{A}$,
it is clear that the power amplifies the largest eigenvalue:
\[
\mtx{A}^t = \sum_{i=1}^n \lambda_i^t \, \vct{v}_i \vct{v}_i^*.
\]
Eventually, as $t$ increases, the power damps the eigenvalues strictly smaller
than $\lambda_1$, and $\mtx{A}^t / \norm{\mtx{A}^t}$ converges to an orthogonal
projector onto the eigenspace associated with the eigenvalue $\lambda_1$.
Provided that the starting vector $\vct{\omega}$ has a nonzero component
in this subspace, then the iterate $\vct{q}_t$ will converge toward
a unit vector in this subspace.  As a consequence, the associated
eigenvalue estimate $\xi_t$ will converge toward $\lambda_1$.

\subsubsection{Power method: Convergence}
\label{sec:pm-classical}

The classical analysis of the power method allows us to sharpen this intuition. 
To simplify the presentation, we rescale the nonzero matrix
$\mtx{A}$ so that the maximum eigenvalue \hilite{$\lambda_1 = 1$}.
We will express the starting vector $\vct{\omega}$
in the orthonormal basis of eigenvectors:
\[
\vct{\omega} = \sum_{i=1}^n \omega_i \, \vct{v}_i
\quad\text{where}\quad
\omega_i \coloneqq \vct{v}_i^* \vct{\omega}.
\]
This notation for the components for the starting vector remains
in force throughout the section.  If you prefer, you could instead
change basis so that $\mtx{A}$ is diagonal.

Let us develop an expression for the relative error~\eqref{eqn:maxeig-rel-err}.
Since $\lambda_1 = 1$, a short calculation yields
\begin{equation}
\begin{aligned}
    \err(\xi_t) &= 1 - \vct{q}_t^* (\mtx{A} \vct{q}_t)
    = \frac{(\vct{\omega}^* \mtx{A}^t)(\mtx{A}^{t} \vct{\omega})}{\normsq{\mtx{A}^t \vct{\omega}}_2} %
    	- \frac{(\vct{\omega}^* \mtx{A}^{t})(\mtx{A}^{t+1} \vct{\omega})}{\normsq{\mtx{A}^{t} \vct{\omega}}_2} \\
    &= \frac{\vct{\omega}^* \mtx{A}^{2t}(\Id_n - \mtx{A})\vct{\omega}}{\normsq{\mtx{A}^{t} \vct{\omega}}_2}
    = \frac{\sum_{i=1}^n \omega_i^2 \lambda_i^{2t} (1 - \lambda_i)}{\sum_{i=1}^n \omega_i^2 \lambda_i^{2t}}.
\end{aligned}
\label{eq:PM_relative_error}
\end{equation}
The first line relies on the definition~\eqref{eqn:pm-maxeig} of the eigenvalue estimate $\xi_t$,
along with the representation~\eqref{eqn:pm-vector-power} of the iterate $\vct{q}_t$.
In the second line, we calculate in the eigenvector basis.

The expression~\eqref{eq:PM_relative_error} for the relative error exposes the influence
of the eigenvalues and the components of the starting vector in the eigenvector basis.
From here, we can easily confirm the asymptotic convergence of the power method under minimal conditions.

\begin{proposition}[Power method: Convergence] \label{prop:pm-convergence}
Assume that the psd matrix $\mtx{A}$ is nonzero.
If the starting vector $\vct{\omega}$ satisfies $\omega_1 \neq 0$,
then the power method produces maximum eigenvalue estimates $\xi_t$
whose relative errors~\eqref{eqn:maxeig-rel-err}
satisfy $\err(\xi_t) \to 0$ as $t \to \infty$.
In particular, $\xi_t \to \lambda_1$.
\end{proposition}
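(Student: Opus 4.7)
The plan is to work directly from the closed-form expression~\eqref{eq:PM_relative_error} derived just above the proposition, since after the rescaling $\lambda_1 = 1$ the relative error is already written as a ratio of two explicit sums in the eigenvalue basis. The strategy is to show the numerator vanishes in the limit while the denominator stays bounded away from zero, which immediately forces $\err(\xi_t)\to 0$.

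First I would establish a uniform lower bound on the denominator. Since $\lambda_1 = 1$ and $\omega_1 \neq 0$ by hypothesis, the single term $i=1$ contributes $\omega_1^2 \lambda_1^{2t} = \omega_1^2 > 0$, and every other term is nonnegative, so
\[
\sum_{i=1}^n \omega_i^2 \lambda_i^{2t} \;\geq\; \omega_1^2 > 0 \qquad\text{for all } t \geq 0.
\]
Next I would attack the numerator by splitting the index set according to whether $\lambda_i$ equals the top eigenvalue or is strictly smaller. Let $\mathcal{I} \coloneqq \{ i : \lambda_i = 1\}$ and $\mathcal{I}^\comp$ be its complement. For $i \in \mathcal{I}$ the factor $(1-\lambda_i)$ vanishes, so those terms contribute nothing to the numerator; this detail is how the argument tolerates multiplicity at the top of the spectrum. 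For $i \in \mathcal{I}^\comp$ we have $0 \leq \lambda_i < 1$, so $\lambda_i^{2t}(1-\lambda_i) \to 0$ as $t \to \infty$. Summing the finitely many such terms gives
\[
\sum_{i=1}^n \omega_i^2 \lambda_i^{2t}(1-\lambda_i) \;=\; \sum_{i \in \mathcal{I}^\comp} \omega_i^2 \lambda_i^{2t}(1-\lambda_i) \;\longrightarrow\; 0.
\]

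Combining these two observations, the ratio in~\eqref{eq:PM_relative_error} tends to $0$, so $\err(\xi_t) \to 0$ and hence $\xi_t \to \lambda_1$. Finally I would note that the rescaling step was harmless: the error $\err(\xi_t)$ is scale-invariant, so the conclusion transfers back to the original (unnormalized) psd matrix. I do not anticipate a real obstacle here — the proof is essentially a bookkeeping exercise on the explicit formula — but the one subtle point worth flagging is the treatment of a degenerate top eigenvalue, which is handled cleanly by noticing that the $(1-\lambda_i)$ factor annihilates exactly the indices in $\mathcal{I}$ that could otherwise contaminate the numerator.
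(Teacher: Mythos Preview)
Your proof is correct and follows essentially the same approach as the paper: both arguments normalize to $\lambda_1=1$, use the explicit formula~\eqref{eq:PM_relative_error}, observe that the $(1-\lambda_i)$ factor kills the contribution of the top-eigenvalue indices in the numerator while the remaining terms decay since $\lambda_i<1$, and lower-bound the denominator using $\omega_1^2>0$. Your version is arguably slightly cleaner in that it handles the scalar case $\mtx{A}=\Id$ uniformly (the set $\mathcal{I}^\comp$ is empty), whereas the paper treats it as a separate special case.
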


\begin{proof}
Impose the normalization $\lambda_1 = 1$.
For the scalar matrix $\mtx{A} = \Id$, we can easily check that $\err(\xi_t) = 0$ for all $t = 0,1,2, \dots$.

Otherwise, let $m < \dim(\mtx{A})$ be the multiplicity of the largest eigenvalue:
$1 = \lambda_1 = \dots = \lambda_m > \lambda_{m+1}$.
Then the identity~\eqref{eq:PM_relative_error} for the error ensures that
\[
\err(\xi_t) = \frac{\sum_{i} \omega_i^2 \lambda_i^{2t} (1 - \lambda_i)}{\sum_{i} \omega_i^2 \lambda_i^{2t}}
	= \frac{\sum_{i > m} \omega_i^2 \lambda_i^{2t}(1- \lambda_i)}{(\omega_1^2 + \dots + \omega_m^2) + \sum_{i > m} \omega_i^2 \lambda_i^{2t}}.
\]
\hilite{Since $\omega_1^2 > 0$}, the parenthesis in the denominator is strictly positive.  
For each $i > m$, we have the limit $\lambda_i^{2t} \to 0$ as $t \to \infty$.
We conclude that $\err(\xi_t) \to 0$.
\end{proof}

The proof of \Cref{prop:pm-convergence} emphasizes that
the estimates for the maximum eigenvalue converge to the correct value
($\xi_t \to \lambda_1$) \hilite{if and only if} the
starting vector $\vct{\omega}$ has a nonzero component
in the eigenspace associated with the maximum eigenvalue.

\subsubsection{Power method: Convergence rate}

As an aside, we can also exploit~\eqref{eq:PM_relative_error} to
derive asymptotic convergence rates for the power method.
These results enforce a \hilite{spectral gap}: $\lambda_1 > \lambda_2$.

\begin{proposition}[Power method: Asymptotic convergence rate] \label{prop:power-method-asymptotic}
	Assume that the psd matrix $\mtx{A}$ satisfies
	$\lambda_1 > \lambda_2 > \lambda_3$.  Assume that the first two components
	of the starting vector $\vct{\omega}$ are nonzero: $\omega_1, \omega_2 \ne 0$.
	Then the power method produces eigenvalue estimates $\xi_t$
	whose relative errors~\eqref{eqn:maxeig-rel-err} satisfy 
    \[
    \frac{\err(\xi_{t+1})}{\err(\xi_t)} \to \lt (\frac{\lambda_2}{\lambda_1}\rt)^{2}
    \quad\text{as $t \to \infty$.}
    \]
\end{proposition}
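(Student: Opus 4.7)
The plan is to leverage the explicit formula~\eqref{eq:PM_relative_error} for $\err(\xi_t)$ derived in~\Cref{sec:pm-classical} and isolate the dominant terms in the numerator and denominator as $t \to \infty$. As in the proof of \Cref{prop:pm-convergence}, I would first normalize so that $\lambda_1 = 1$; this is harmless, since the relative error~\eqref{eqn:maxeig-rel-err} is scale-invariant in $\mtx{A}$, so the limiting ratio it produces will carry over to the general case as $(\lambda_2/\lambda_1)^2$ after unscaling.

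Under this normalization, the $i = 1$ contribution to the numerator of~\eqref{eq:PM_relative_error} vanishes because $1 - \lambda_1 = 0$. Thus
\[
\err(\xi_t) \;=\; \frac{\sum_{i \geq 2} \omega_i^2 \lambda_i^{2t}(1-\lambda_i)}{\omega_1^2 + \sum_{i \geq 2} \omega_i^2 \lambda_i^{2t}}.
\]
The key asymptotic extraction is now elementary. The hypothesis $\omega_1 \neq 0$ makes the denominator converge to $\omega_1^2$, since every term with $i \geq 2$ decays like $\lambda_i^{2t}$ with $\lambda_i < 1$. The hypotheses $\omega_2 \neq 0$ and $\lambda_2 > \lambda_3 \geq \cdots \geq \lambda_n \geq 0$ make the numerator dominated by the $i = 2$ term: factoring $\lambda_2^{2t}$ gives
\[
\sum_{i \geq 2} \omega_i^2 \lambda_i^{2t}(1-\lambda_i)
\;=\; \lambda_2^{2t}\Bigl[ \omega_2^2 (1-\lambda_2) + \sum_{i \geq 3} \omega_i^2 (\lambda_i/\lambda_2)^{2t}(1-\lambda_i) \Bigr],
\]
and the bracketed quantity tends to $\omega_2^2(1-\lambda_2) > 0$ because $(\lambda_i/\lambda_2)^{2t} \to 0$ for each $i \geq 3$.

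Putting these together, $\err(\xi_t) = (\omega_2^2(1-\lambda_2)/\omega_1^2)\,\lambda_2^{2t}(1 + o(1))$, and therefore $\err(\xi_{t+1})/\err(\xi_t) \to \lambda_2^2$ as $t \to \infty$. Unscaling (multiplying $\mtx{A}$ by $1/\lambda_1$ to get back to the original problem) replaces $\lambda_2$ by $\lambda_2/\lambda_1$, which delivers the claimed limit. There is no real obstacle here; the only care needed is the geometric-series argument that shows eigenvalues beyond the second make no asymptotic contribution, and the observation that positivity of $\omega_2^2(1-\lambda_2)$ prevents the bracket from collapsing, so that the $\lambda_2^{2t}$ prefactor genuinely governs the rate.
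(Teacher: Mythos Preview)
Your proof is correct and follows essentially the same approach as the paper: normalize to $\lambda_1 = 1$, use the explicit formula~\eqref{eq:PM_relative_error}, and extract the dominant $\lambda_2^{2t}$ term using the hypotheses $\omega_1,\omega_2 \neq 0$ and $\lambda_2 > \lambda_3$. The only stylistic difference is that you first establish the asymptotic $\err(\xi_t) \sim C\lambda_2^{2t}$ and then take the ratio, whereas the paper forms the ratio $\err(\xi_{t+1})/\err(\xi_t)$ directly and extracts dominant terms from each of its four sums; your route has the minor bonus of identifying the constant $C = \omega_2^2(1-\lambda_2)/\omega_1^2$.
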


This result states that $\xi_t \to \lambda_1$ exponentially fast
with a rate that depends on the spectral gap.  Let us stress that this is
an \hilite{asymptotic} rate of convergence that only prevails in the limit.
As with \Cref{prop:pm-convergence}, it is both necessary and sufficient that
$\omega_1, \omega_2 \neq 0$ to achieve this rate.

\begin{proof}
As before, we may rescale so that $\lambda_1 = 1$.  Using the identity~\eqref{eq:PM_relative_error},
we can form the ratio of the relative errors:
\[
\begin{aligned}
\frac{\err(\xi_{t+1})}{\err(\xi_t)}
	&= \frac{\sum_{i} \omega_i^2 \lambda_i^{2t+2} (1 - \lambda_i)}{\sum_{i} \omega_i^2 \lambda_i^{2t} (1 - \lambda_i)}
	\cdot \frac{\sum_{i} \omega_i^2 \lambda_i^{2t}}{\sum_{i} \omega_i^2 \lambda_i^{2t+2}} \\
		&= \frac{\lambda_2^{2} \cdot \omega_2^2 \lambda_2^{2t} (1 - \lambda_2) + \sum_{i>2} \omega_i^2 \lambda_i^{2t+2} (1 - \lambda_i)}
		{\omega_2^2 \lambda_2^{2t} (1 - \lambda_2) + \sum_{i>2} \omega_i^2 \lambda_i^{2t} (1 - \lambda_i)}
	\cdot \frac{\omega_1^2 + \sum_{i>1} \omega_i^2 \lambda_i^{2t}}{\omega_1^2 + \sum_{i>1} \omega_i^2 \lambda_i^{2t+2}}
	\to \lambda_2^2 \cdot 1.
\end{aligned}
\]
In the second line, we have extracted the dominant term from each sum.  As $t \to \infty$,
each sum converges to zero faster than this dominant term because $\lambda_1 > \lambda_2 > \lambda_3$.
We can cancel the effects of $\omega_1, \omega_2$ because they are both nonzero.
\end{proof}

\subsubsection{The role of randomness}

\Cref{prop:pm-convergence,prop:power-method-asymptotic} demand that
the starting vector $\vct{\omega}$ for the power method
has nonzero components in the direction of the leading eigenspaces of the matrix $\mtx{A}$.
How can we secure this property if we do not know anything about the eigenvectors?
The classical prescription is to select the starting vector \hilite{at random}.

Using the structure of the maximum eigenvalue problem,
we can design a {distribution} for the
random starting vector in the power method.
Since we do not have access to the eigenvector basis of the input matrix,
we should select a distribution for the starting vector that is indifferent
to the eigenvector basis.
The natural choice is a \hilite{rotationally invariant} distribution.
The power method is invariant to the scale of the starting vector,
so the distribution of the radial component is at our option.

Recognizing these principles, we will draw the starting vector from the %
\hilite{standard normal} distribution: $\vct{\omega} \sim \normal(\vct{0}, \Id_n)$.
In particular, the components $(\omega_1, \dots, \omega_n)$ of the starting vector
in the eigenvector basis %
compose an \hilite{independent} family of real standard normal variables:
$\omega_i \sim \normal(0, 1)$.
\Cref{alg:power-method} lists pseudocode for this %
version of the \hilite{power method with a random initialization}.

This random starting vector has a valuable feature that
has long been appreciated by numerical analysts~\cite[p.~155]{Dem97:Applied-Numerical}.
Almost surely, each component of the standard normal starting vector
is nonzero: $\Prob{ \omega_i \neq 0 } = 1$.
Thus, we can activate \Cref{prop:pm-convergence,prop:power-method-asymptotic}
to deduce that the randomized power method converges to the maximum eigenvalue
\hilite{with probability one}!

In fact, the random starting vector provides additional benefits
that were first recognized~\cite{Dix83:Estimating-Extremal,KW92:Estimating-Largest} in the 1980s and 1990s.
With a careful probabilistic analysis, %
we can derive several beautiful results about the randomized power method.
These theorems appear in the upcoming subsections.

For intuition, a closer examination of the relative error~\eqref{eq:PM_relative_error}
suggests that the power method converges most quickly when the first component $\omega_1$ of the
starting vector is large, while the remaining components $\omega_2, \dots, \omega_n$
are small.  %
Since the components are independent random variables with $\Expect[ \omega_i^2 ] = 1$,
the typical size of each coordinate $\omega_i$ is about one.
In other words, the standard normal distribution does a reasonable job at achieving
our goals for the components of the starting vector.
This is the best we can hope for, absent knowledge about the leading eigenspace.

\begin{algorithm}[t]
\begin{algbox}[1]
\caption{\textit{Randomized power method.}}
\label{alg:power-method}

\Require	Psd input matrix $\mtx{A} \in \Sym_n^+(\R)$, number $T$ of iterations
\Ensure		Maximum eigenvalue estimate $\xi_T$

\vspace{5pt}
\hrule
\vspace{5pt}

\Function{RandPowerMethod}{$\mtx{A}$; $T$}
\State	Sample $\vct{x}_0 \sim \normal(\vct{0}, \Id_n)$
\Comment	Standard normal starting vector

\For{$t = 0, \dots, T$}
\State	$\vct{q}_t = \vct{x}_t / \norm{\vct{x}_t}_2$
\Comment	Normalize the iterate
\State	$\vct{x}_{t+1} = \mtx{A} \vct{q}_t$
\Comment	One matvec

\State	$\xi_t = \vct{q}_t^* \vct{x}_{t+1}$
\Comment	Eigenvalue estimate
\EndFor

\EndFunction
\end{algbox}
\end{algorithm}

\subsubsection{Randomized power method: Spectral gap}

The next step is to quantify how quickly the randomized power method
converges to the maximum eigenvalue.
This theory adapted from a remarkable paper of
Kuczy{\'n}ski \& Wo{\'z}niakowski~\cite{KW92:Estimating-Largest}.
As observed in~\cite[Lec.~2]{Tro20:Randomized-Algorithms-LN},
we can dramatically simplify their calculations using the
properties of the standard normal distribution.

Echoing the classical analysis, we begin with a result that imposes an assumption
on the spectral gap of the input matrix.
The next section will show how to remove this condition.

\begin{theorem}[Randomized power method: Spectral gap] \label{thm:RPM_with_gap}
Assume that $\mtx{A} \in \Sym_n^+(\R)$ is a psd matrix with a spectral gap:
$\lambda_1 > \lambda_2$. Then the randomized power method (\Cref{alg:power-method}) produces
eigenvalue estimates $\xi_t$ whose relative errors~\eqref{eqn:maxeig-rel-err}
satisfy
\begin{equation} \label{eqn:rpm-gap}
\Expect \err (\xi_t) \le \sqrt{2n} \cdot \lt ( \frac{\lambda_2}{\lambda_1} \rt)^t 
\quad\text{for $t = 0,1,2,\dots$.}
\end{equation}
See~\cite[Thm.~4.1]{KW92:Estimating-Largest} for probability bounds.
\end{theorem}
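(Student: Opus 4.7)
My plan is to combine the exact expression~\eqref{eq:PM_relative_error} for the relative error with a clean one-dimensional Gaussian calculation, reducing everything to a conditional expectation that can be controlled by Jensen's inequality.

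First I would normalize so $\lambda_1 = 1$ and exploit rotational invariance of the starting vector $\vct{\omega} \sim \normal(\vct{0}, \Id_n)$ to work in the eigenvector basis, in which the components $\omega_i = \vct{v}_i^*\vct{\omega}$ are i.i.d.\ standard normal. Substituting into~\eqref{eq:PM_relative_error}, bounding $(1-\lambda_i) \le 1$ in the numerator, and using $\lambda_i \le \lambda_2$ for $i \ge 2$ together with the monotonicity of $z \mapsto z/(\omega_1^2+z)$, I arrive at
\[
\err(\xi_t) \le \frac{\lambda_2^{2t}\, Y}{\omega_1^2 + \lambda_2^{2t}\, Y},
\qquad Y \coloneqq \sum_{i \ge 2}\omega_i^2,
\]
where $Y$ is independent of $\omega_1$ and $\Expect Y = n-1$.

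The heart of the argument is to study the one-dimensional function $g(m) \coloneqq \Expect_\omega[m/(\omega^2+m)]$ for $\omega \sim \normal(0,1)$. After the substitution $\omega = \sqrt{m}\, u$ and the pointwise bound $e^{-mu^2/2} \le 1$,
\[
g(m) = \frac{\sqrt{m}}{\sqrt{2\pi}}\int_{-\infty}^\infty \frac{e^{-mu^2/2}}{u^2+1}\, du \le \frac{\sqrt{m}}{\sqrt{2\pi}}\cdot \pi = \sqrt{\pi m/2}.
\]
Moreover, $g$ is concave on $[0,\infty)$ because $m \mapsto m/(\omega^2+m) = 1 - \omega^2/(\omega^2+m)$ is concave in $m$ for each fixed $\omega$. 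To finish, I would condition on $Y$ and invoke Jensen's inequality:
\[
\Expect\err(\xi_t) \le \Expect_Y\, g\bigl(\lambda_2^{2t} Y\bigr) \le g\bigl(\lambda_2^{2t}(n-1)\bigr) \le \sqrt{\pi(n-1)/2}\cdot \lambda_2^t \le \sqrt{2n}\cdot \lambda_2^t,
\]
where the last step uses $\pi(n-1)/2 \le 2n$ for all $n \ge 1$.

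The main obstacle is the temptation to use the cruder bound $\err(\xi_t) \le Z/\omega_1^2$ with $Z \coloneqq \sum_{i\ge 2}\omega_i^2\lambda_i^{2t}$: this has \emph{infinite} expectation because $\Expect[\omega_1^{-2}] = \infty$ when $\omega_1 \sim \normal(0,1)$. The key trick is to keep the self-normalizing denominator $\omega_1^2 + Z$ intact throughout, so that the one-dimensional $\omega_1$-expectation obeys $g(m) \lesssim \sqrt{m}$ rather than blowing up. This square-root is precisely what converts the pointwise rate $\lambda_2^{2t}$ into the claimed expected rate $\lambda_2^t$ and contributes the $\sqrt{n}$ prefactor via $\Expect Y = n-1$.
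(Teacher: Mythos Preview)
Your proof is correct and follows essentially the same route as the paper: normalize $\lambda_1=1$, bound $(1-\lambda_i)\le 1$, integrate out $\omega_1$ via the Gaussian estimate $\Expect_g[c/(g^2+c)]\le\sqrt{\pi c/2}$ (which you derive explicitly; the paper quotes it as a fact), and then apply Jensen. The only cosmetic difference is that you bound $\lambda_i\le\lambda_2$ \emph{before} the $\omega_1$-integration (via monotonicity of $z\mapsto z/(\omega_1^2+z)$) and invoke Jensen through concavity of $g$, whereas the paper keeps $\sum_{i>1}\omega_i^2\lambda_i^{2t}$ through the $\omega_1$-step and applies Jensen to $\sqrt{\cdot}$ afterward; both orderings yield the identical constant $\sqrt{\pi(n-1)/2}\le\sqrt{2n}$.
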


In contrast with the classical convergence result~\Cref{prop:power-method-asymptotic},
\Cref{thm:RPM_with_gap} offers a \hilite{nonasymptotic} error bound
that is valid in each iteration $t$.
It may be disappointing that the expected error~\eqref{eqn:rpm-gap} only improves by
a factor of $(\lambda_2 / \lambda_1)$ at each iteration,
a rate that is substantially worse than the asymptotic rate $(\lambda_2 / \lambda_1)^2$.
The bound~\eqref{eqn:rpm-gap} reflects the possibility
that the first component $\omega_1$ of the starting vector is unusually small.
For the randomized power method, \Cref{thm:RPM_with_gap}
is essentially sharp~\cite[Thm.~3.1(b)]{KW92:Estimating-Largest},
but see~\cite{Tro22:Randomized-Block} for more texture.

What else does the bound~\eqref{eqn:rpm-gap} tell us about
the performance of the randomized power method?
Let us rewrite the error in terms
of the \hilite{relative spectral gap} $\gamma$, defined as %
\[
\gamma \coloneqq \frac{\lambda_1 - \lambda_2}{\lambda_1} \in [0,1].
\]
\Cref{thm:RPM_with_gap} yields the exponential convergence rate
\[
\Expect \err(\xi_t) \le \sqrt{2n} \cdot \econst^{-\gamma t}
\quad\text{for $t = 0,1,2,\dots$.}
\]
For any tolerance $\eps \in (0,1)$, we can achieve an expected relative
error $\eps$ after a predictable number $t$ of iterations:
\[
t \ge \frac{\log \sqrt{ 2n } + \log \lt( 1/\eps \rt)}{\gamma}
\quad\text{implies}\quad
\Expect \err(\xi_t) \leq \eps.
\]
This formula reveals that the randomized power method exhibits a \hilite{burn-in period}.
It may not make any progress for the first $\log(2n) / (2 \gamma)$ iterations. 
Afterward, it begins to converge at an exponential rate
that depends on the relative spectral gap $\gamma$.
The logarithmic dependence $\log(1/\eps)$
on the tolerance $\eps$ indicates that we can achieve very small relative errors after a
modest number of iterations, relative to the spectral gap $\gamma$.
For comparison, recall that a simple Monte Carlo approximation
suffers an $\eps^{-2}$ dependence on the tolerance.

\begin{proof}[Proof of \Cref{thm:RPM_with_gap}]
	As always, we may normalize the matrix so $\lambda_1 = 1$.
    Start with the expectation of the bound~\eqref{eq:PM_relative_error} on the relative error:
    \begin{equation} \label{eqn:rpm-error-decomp}
    \Expect \err(\xi_t) = \Expect \left[ \frac{\sum_{i} \omega_i^2 \lambda_i^{2t} (1 - \lambda_i)}{ \sum_i \omega_i^2 \lambda^{2t} } \right] %
    	\leq \Expect_{\omega_2, \dots, \omega_n} \Expect_{\omega_1} \left[ \frac{\sum_{i > 1} \omega_i^2 \lambda_i^{2t}}{ \omega_1^2 + \sum_{i > 1} \omega_i^2 \lambda^{2t} } \right].
	\end{equation}
	To bound the numerator, note that $0 \leq 1 - \lambda_i \leq 1$.
	Since the components $\omega_1, \dots, \omega_n$ are \hilite{independent} standard normal variables,
	we can take the expectation $\Expect_{\omega_1}$ with respect to the first component
	before averaging over the remaining components.

To continue, we rely on an expectation bound for a standard normal variable~\cite[Fact 2.8]{Tro20:Randomized-Algorithms-LN}.
\begin{fact}[A Gaussian integral] \label{fact:gaussian_integral}
	Fix $c \geq 0$. For a real standard normal variable $g \sim {\textup \normal}(0, 1)$,
    \[
    \Expect_g \lt [ \frac{c }{g^2 + c} \rt ] 
    	\leq \sqrt{\frac{\pi}{2} \cdot c}. %
    \]
\end{fact}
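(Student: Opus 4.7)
My plan is to reduce the integral to a computable form by rescaling, then to drop the Gaussian factor to evaluate what remains explicitly.

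First, I would dispose of the trivial case $c = 0$, where both sides equal $0$. For $c > 0$, I would write the expectation as an explicit integral against the standard normal density:
\[
\Expect_g \left[ \frac{c}{g^2 + c} \right] = \frac{1}{\sqrt{2\pi}} \int_{-\infty}^{\infty} \frac{c}{g^2 + c} \, \econst^{-g^2/2} \idiff g.
\]
The idea is that the denominator $g^2 + c$ and the Gaussian weight $\econst^{-g^2/2}$ live on different natural scales ($\sqrt{c}$ versus $1$), and a substitution should separate the algebraic part from the probabilistic part.

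Next, I would perform the change of variables $u = g/\sqrt{c}$, so that $\idiff g = \sqrt{c} \idiff u$ and $g^2 + c = c(u^2 + 1)$. This transforms the integral into
\[
\frac{\sqrt{c}}{\sqrt{2\pi}} \int_{-\infty}^{\infty} \frac{1}{u^2 + 1} \, \econst^{-cu^2/2} \idiff u.
\]
The cancellation between the factor $c$ in the numerator and the factor $c$ produced in the denominator is the key step: all dependence on $c$ outside the Gaussian exponent has been pulled out as $\sqrt{c}$.

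Finally, I would use the crude bound $\econst^{-cu^2/2} \leq 1$ to eliminate the Gaussian weight, leaving the standard arctangent integral
\[
\int_{-\infty}^{\infty} \frac{\idiff u}{u^2 + 1} = \pi.
\]
Combining gives the upper bound $\sqrt{c}\,\pi/\sqrt{2\pi} = \sqrt{\pi c/2}$, as claimed. There is no real obstacle here; the only question is whether the loose step $\econst^{-cu^2/2} \leq 1$ is wasteful enough to matter downstream, and the application in the proof of \Cref{thm:RPM_with_gap} suggests it is not.
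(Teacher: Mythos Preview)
Your proof is correct. The paper does not actually supply its own proof of this statement: it is recorded as a \emph{Fact} inside the proof of \Cref{thm:RPM_with_gap} and attributed to \cite[Fact~2.8]{Tro20:Randomized-Algorithms-LN}, so there is nothing in the paper to compare against. Your argument---rescale $g \mapsto \sqrt{c}\,u$ to pull out the factor $\sqrt{c}$, bound $\econst^{-cu^2/2} \le 1$, and evaluate $\int_{-\infty}^{\infty} (1+u^2)^{-1}\idiff u = \pi$---is the standard route and almost certainly matches what the cited source does.
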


\noindent
	Employing Fact~\ref{fact:gaussian_integral} with $c = \sum_{i > 1} \omega_i^2 \lambda_i^{2t}$
	and $g = \omega_1$,	we can bound the error as
	\begin{align*}
	\Expect \err(\xi_t)
		\leq \Expect_{\omega_2, \dots, \omega_n} \sqrt{\frac{\pi}{2}\sum_{i>1} \omega_i^2 \lambda_i^{2t}} %
		\leq \sqrt{\frac{\pi}{2} \sum_{i > 1} \Expect[ \omega_i^2 ] \cdot \lambda_i^{2t}} 
		\leq \sqrt{\frac{\pi}{2} (n-1)} \cdot \lambda_2^{t}
	\end{align*}
	The second inequality is Jensen's.  Third, we recall that $\Expect[\omega_i^2] = 1$
	and introduce the bound $\lambda_i \leq \lambda_2$ for $i \geq 2$.
	Finally, invoke the numerical inequality $\pi/2 < 2$ and remove the normalization.
\end{proof}

\subsubsection{Randomized power method: No spectral gap}

The probabilistic analysis of the randomized power method also points
to new phenomena that are invisible to the classical analysis.
\hilite{Without assuming a spectral gap}, we can still obtain
a convergence rate for the randomized power method.  This result is also
due to Kuczy{\'n}ski \& Wo{\'z}niakowski~\cite{KW92:Estimating-Largest},
with roots in the work of Dixon~\cite{Dix83:Estimating-Extremal}.
The proof here is adapted from~\cite[Lec.~2]{Tro20:Randomized-Algorithms-LN}.

\begin{theorem}[Randomized power method: No spectral gap]
\label{thm:rpm-gapless}
Assume that $\mtx{A} \in \Sym_n^+(\R)$ is a nonzero psd matrix.
Then the randomized power method (\Cref{alg:power-method}) produces
eigenvalue estimates $\xi_t$ whose relative errors~\eqref{eqn:maxeig-rel-err}
satisfy
\[
\Expect \err(\xi_t) \le \frac{1 + \log \sqrt{2n} + \log t}{t}
\quad\text{for $t = 1,2,3,\dots$.}
\]
In particular, for any tolerance $\eps \in (0,1)$,
\[
t \ge \frac{1 + \log \sqrt{2n} + \log t }{\eps}
\quad\text{implies}\quad
\Expect \err(\xi_t) \le \eps.
\]
See~\cite[Thm.~4.1]{KW92:Estimating-Largest} for probability bounds.
\end{theorem}

Before turning to the proof, a few remarks.
\Cref{thm:rpm-gapless} is a \hilite{nonasymptotic} result that holds in each iteration.
It reveals a \hilite{burn-in period} of about $\log \smash{\sqrt{2n}}$ steps
before the randomized power method starts to make progress.
Afterward, the error decays at the \hilite{polynomial rate} of $1/t$.
This is far worse than the exponential rate attained
when we have a spectral gap, but it requires \hilite{no assumptions}.
Keep in mind that \Cref{thm:RPM_with_gap,thm:rpm-gapless} are both valid,
so we can employ whichever bound is strongest.

\begin{proof}[Proof of Theorem~\ref{thm:rpm-gapless}]
Normalize the matrix so that $\lambda_1 = 1$.
Nominate a parameter $\beta \in [0,1]$, which will be determined later.
As compared with the proof of \Cref{thm:RPM_with_gap},
the key new insight is to split the sum in the numerator
into terms where $\lambda_i \geq \beta$ and where $\lambda_i < \beta$.

Proceeding in this way,
the expected relative error~\eqref{eq:PM_relative_error} satisfies the bound
\begin{align*}
\Expect \err(\xi_t) &= \Expect \left[ \frac{\sum_{i} \omega_i^2 \lambda_i^{2t} (1 - \lambda_i)}{\sum_{i} \omega_i^2 \lambda_i^{2t}}\right]
	\leq \Expect \left[ \frac{(1-\beta) \sum_{\lambda_i \geq \beta} \omega_i^2 \lambda_i^{2t} + \sum_{\lambda_i < \beta} \omega_i^2 \lambda_i^{2t}}{\sum_{i} \omega_i^2 \lambda_i^{2t}} \right] \\
	&\leq (1 - \beta) + \Expect_{\omega_2,\dots,\omega_n} \Expect_{\omega_1} \left[\frac{\sum_{\lambda_i < \beta} \omega_i^2 \lambda_i^{2t}}{\omega_1^2 + \sum_{\lambda_i < \beta} \omega_i^2 \lambda_i^{2t}} \right] \\
\intertext{Invoke \Cref{fact:gaussian_integral} with $c = \sum_{\lambda_i < \beta} \omega_i^2 \lambda_i^{2t}$
and $g=\omega_1$ to obtain}
\Expect \err(\xi_t) &\leq (1 - \beta) + \Expect_{\omega_2, \dots, \omega_n} \sqrt{ \frac{\pi}{2} \sum_{\lambda_i < \beta} \omega_i^2 \lambda_i^{2t} }
	\leq (1 - \beta) + \sqrt{\frac{\pi}{2} \sum_{\lambda_i < \beta} \lambda_i^{2t}} \\
	&\leq (1 - \beta) + \sqrt{\frac{\pi}{2} (n-1)} \cdot \beta^{t}
	\leq (1 - \beta) + \sqrt{2n} \cdot \econst^{-(1-\beta)t}.
\end{align*}
The second inequality is Jensen's.  Then we made the coarse bound $\lambda_i \leq \beta$
for each eigenvalue that participates in the sum.  The last inequality is numerical.
Finally, minimize the right-hand side over $\beta \in [0,1]$ to arrive at
advertised result.
\end{proof}

\subsubsection{Discussion}

By exploiting the properties of the standard normal starting vector,
we can systematically establish a body of results on the behavior of the
randomized power method.
We can reproduce classical facts~\cite[Thm.~4.2.1]{Par98:Symmetric-Eigenvalue}
about the performance in the presence of a spectral gap,
including the burn-in period and nonasymptotic exponential
convergence rates.
We can also identify the surprising phenomenon that the
randomized power method makes progress
\hilite{even without a spectral gap}.
To the best of our knowledge, this result first emerged
from the probabilistic analysis of the algorithm.

This section focuses on the randomized power method because
it is both familiar and simple.  In practice, however,
we typically rely on more powerful algorithms
for estimating maximum eigenvalues.
Kuczy{\'n}ski \& Wo{\'z}niakowski~\cite{KW92:Estimating-Largest}
also proposed a probabilistic analysis of the Lanczos method
with a random starting vector.  Their work demonstrates that
the randomized Lanczos method has a burn-in period,
nonasymptotic exponential convergence in the presence of a spectral gap,
and nonasymptotic polynomial convergence even without a spectral gap.
Related results~\cite{Tro22:Randomized-Block} are also available for the randomized block
power method and the randomized block Lanczos method;
these variants use multiple starting vectors to enhance convergence.
In all of these cases, the probabilistic analysis reveals new insights
about the performance of these fundamental numerical algorithms.

\subsection{The randomized SVD}

We have seen that random initialization can play a role in the computation
of scalar quantities, such as the maximum eigenvalue of a psd matrix.
In this section, we develop a second example where random initialization
supports an algorithm for a matrix computation: the randomized SVD
for low-rank matrix approximation.

Consider a rectangular matrix %
that we can access with via matvec operations.
A basic task in numerical linear algebra is to produce a low-rank
approximation of the matrix. %
For example, principal component analysis reduces to low-rank approximation.
We frame the following problem:

\begin{problem}[Low-rank matrix approximation]
Suppose $\mtx{B}$ is a rectangular matrix, accessible via matvecs
$\vct{x} \mapsto \mtx{B}\vct{x}$ and $\vct{y} \mapsto \mtx{B}^* \vct{y}$.
The task is to produce a low-rank approximation of $\mtx{B}$ %
that is competitive with a best approximation of similar rank.
\end{problem}

\subsubsection{Singular-value decompositions and low-rank approximations}

We begin with some deep background on low-rank approximation of matrices.
Consider a rectangular matrix $\mtx{B} \in \R^{m \times n}$, and recall
that $m \wedge n \coloneqq \min\{m,n\}$.
We introduce a singular-value decomposition (SVD) of the matrix: %
\begin{equation} \label{eqn:B-svd}
\mtx{B} = %
\sum_{i=1}^{m \wedge n} \sigma_i \, \vct{u}_i \vct{v}_i^*
\quad\text{where}\quad
\sigma_1 \geq \sigma_2 \geq \dots \geq \sigma_{m \wedge n} \geq 0.
\end{equation}
The right singular vectors $(\vct{v}_i : i = 1, \dots, m \wedge n)$
compose an orthonormal system in $\R^n$, %
while the left singular vectors $(\vct{u}_i : i = 1, \dots, m \wedge n)$
compose an orthonormal system in $\R^m$.  As usual, the singular values $\sigma_i$
are listed in decreasing order.  Geometrically, the SVD tells us that the matrix
$\mtx{B}$ maps the unit sphere in $\R^n$ to an ellipsoid in $\R^m$;
see \Cref{fig:Ng1}.

\begin{figure}[th]
\centering
\begin{subfigure}[b]{0.9\textwidth}
   \includegraphics[width=\textwidth]{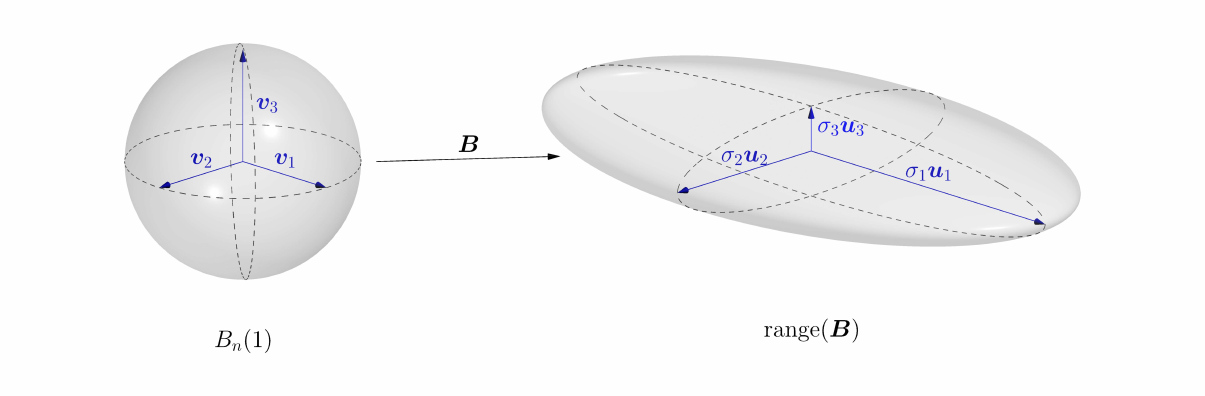}
   \caption{\textbf{Geometry of the SVD.}  The linear map $\mtx B \in \R^{n\times m}$ transforms the unit sphere in $\R^n$ to an ellipsoid in $\R^m$ with semiaxes determined by the singular values and the singular vectors of $\mtx{B}$. %
   }
   \label{fig:Ng1} 
\end{subfigure}
\begin{subfigure}[b]{0.9\textwidth}
   \includegraphics[width=\textwidth]{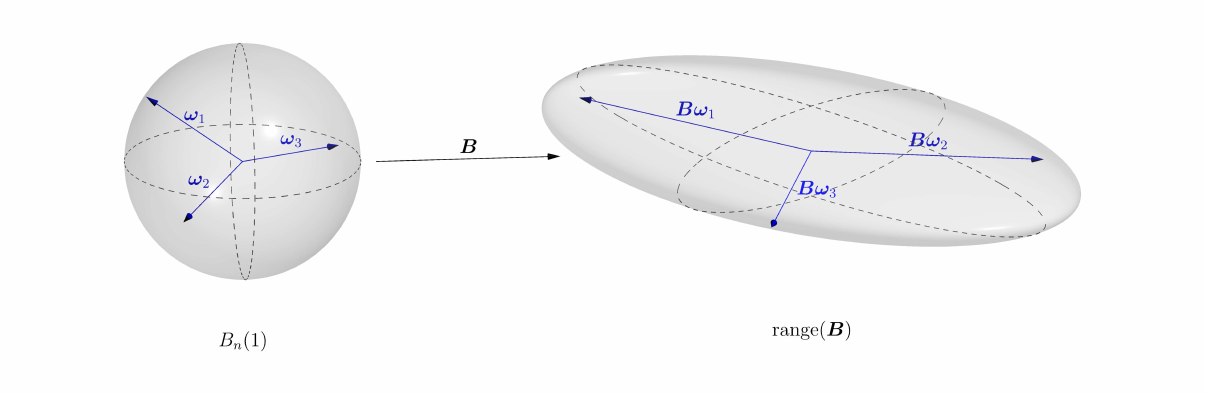}
   \caption{\textbf{Images of random vectors.}  A normalized Gaussian vector $\vct{\omega} / \norm{\vct{\omega}}_2$ is distributed uniformly on the unit sphere.  The image $\mtx{B} \vct{\omega} / \norm{\vct{\omega}}_2$ tends to align with the major semiaxes of the ellipsoid defined by $\mtx B$.
   To cover the leading $r$ directions, we need to use slightly more than $r$ random test vectors.
   }
   \label{fig:Ng2}
\end{subfigure}
\caption{\textbf{Randomized SVD: Intuition.}  These schematics illustrate the geometric ideas behind the SVD and the randomized SVD.}
\end{figure}

What can we say about the optimal low-rank approximation of the matrix $\mtx{B}$?
Given a rank parameter $r \leq m \wedge n$, define the orthogonal projector $\mtx{P}_r$
onto the span of the leading $r$ left singular vectors:
\[
\mtx{P}_r \coloneqq \sum_{i=1}^r \vct{u}_i \vct{u}_i^*.
\]
The Eckart--Young theorem~\cite{EY36:Approximation-One}
identifies a best rank-$r$ approximation with respect
to the Frobenius norm:
\begin{equation} \label{eqn:eckart-young}
\min\nolimits_{\rank(\mtx{M}) \leq r} \fnormsq{ \mtx{B} - \mtx{M} }
	= \fnormsq{ \mtx{B} - \mtx{P}_r \mtx{B} }
	= \sum_{i > r} \sigma_i^2.
\end{equation}
In words, a best approximation is achieved by projecting the
target matrix onto the $r$-dimensional subspace that captures
most of the action of the matrix.  We remark that the best
approximation is unique if and only if $\sigma_{r} > \sigma_{r+1}$.
Throughout this discussion, we focus on the Frobenius-norm error
to obtain more transparent results.

In some applications, it is unnecessary to compute the best approximation
to high accuracy.
Instead, we can relax our requirements.
Let $s = r + \ell$ for a small natural number $\ell$.
For a tolerance $\eps > 0$, we seek a \hilite{rank-$s$} approximation $\widehat{\mtx{B}}_s$
that competes with the best \hilite{rank-$r$} approximation~\eqref{eqn:eckart-young}:
\begin{equation} \label{eqn:low-rank-error-goal}
\fnormsq{ \mtx{B} - \widehat{\mtx{B}}_s }
	\leq (1 + \eps) \cdot \fnormsq{\mtx{B} - \mtx{P}_r \mtx{B}}
	= (1 + \eps) \cdot \sum_{ i > r } \sigma_i^2.
\end{equation}
When the best rank-$r$ approximation error is tiny,
it is not a big deal to pay a modest factor $(1+\eps)$ more.
This situation occurs in scientific computing applications
where the singular values of the matrix decay
exponentially fast~\cite[Sec.~7.1]{HMT11:Finding-Structure}.

\subsubsection{Randomized SVD: Intuition}

Our goal is to produce an estimate for the best rank-$r$ approximation $\mtx{P}_r \mtx{B}$
of the input matrix.
Instead of computing the projector $\mtx{P}_r$ onto the $r$ leading left singular vectors,
we will use \hilite{randomness} to estimate these directions.
We can trace this insight to work of Frieze et al.~\cite{FKV98:Fast-Monte-Carlo}.
Martinsson et al.~\cite{MRT06:Randomized-Algorithm} developed this idea into a practical algorithm,
which was crystallized in the paper~\cite{HMT11:Finding-Structure}.

The method is to multiply \hilite{random test vectors} into the matrix $\mtx{B}$
to identify salient directions in the range.  We have the intuition that the image
of a random vector tends to be aligned with the leading left singular
vectors.  By repetition, we can cover most of %
the range of the projector $\mtx{P}_r$.  Since we do not know the orthonormal basis
of right singular vectors, it is natural to draw the random vectors
from a rotationally invariant distribution.
See \Cref{fig:Ng2} for an illustration.

We can describe this approach mathematically~\cite[Sec.~2.1]{TW23:Randomized-Algorithms}.
Draw a standard normal test vector $\vct{\omega} \in \R^n$.  The image of the random vector satisfies
\[
\mtx{B} \vct{\omega} = \sum_{i=1}^{m \wedge n} \sigma_i \, \vct{u}_i (\vct{v}_i^* \vct{\omega}) 
	\eqqcolon \sum_{i=1}^{m \wedge n}  \omega_i \sigma_i \, \vct{u}_i
\]
As before, the component $\omega_i \coloneqq \vct{v}_i^* \vct{\omega}$ of the random vector
along the $i$th right singular vector follows a standard normal distribution,
and the components $(\omega_i : i = 1, \dots, m \wedge n)$ compose an independent family.
On average, $\Expect[ \omega_i^2 ] = 1$.  Therefore, the image $\mtx{B} \vct{\omega}$
tends to align with the left singular vectors associated with large singular values.

By repeating this process with a statistically independent family
$(\vct{\omega}^{(j)} : j = 1, \dots, s)$ of random test vectors,
we can obtain a family $(\mtx{B} \vct{\omega}^{(j)} : j = 1, \dots, s)$
of vectors whose span contains most of $\range(\mtx{P}_r)$.
The number $s = r + \ell$ of test vectors needs to be a bit
larger than the target rank $r$ to obtain coverage of the
subspace with high probability.

\subsubsection{Randomized SVD: Algorithm}

Let us forge this intuition into an algorithm~\cite[p.~227]{HMT11:Finding-Structure},
called the \hilite{randomized SVD}.
For a rank parameter $s$, we draw a random test matrix:
\[
\mtx{\Omega} = \begin{bmatrix} \vct{\omega}^{(1)} & \dots & \vct{\omega}^{(s)} \end{bmatrix} \in \R^{n \times s}
\quad\text{where $\vct{\omega}^{(j)} \sim \normal(\vct{0}, \Id_n)$ i.i.d.}
\]
We obtain the images of the test vectors using a matrix--matrix product
with the input matrix:
\[
\mtx{Y}  \coloneqq \mtx{B\Omega}.
\]
This step requires $s$ matvecs with $\mtx{B}$.
The orthogonal projector $\mtx{P}_{\mtx{Y}}$ onto the range of $\mtx{Y}$
serves as a proxy for the ideal projector $\mtx{P}_r$.  Computationally,
\[
\mtx{P}_{\mtx{Y}} \coloneqq \mtx{QQ}^*
\quad\text{where}\quad
\mtx{Q}  \coloneqq \texttt{orth}(\mtx{Y}).
\]
The function $\texttt{orth}$ returns an orthonormal basis for the range
of a matrix, and it costs $\mathcal{O}(s^2 m)$ arithmetic operations.
Finally, we report the approximation $\widehat{\mtx{B}}_s$ in factored form:
\[
\widehat{\mtx{B}}_s \coloneqq \mtx{P}_{\mtx{Y}} \mtx{B} = \mtx{Q} (\mtx{Q}^* \mtx{B})
\]
This step requires $s$ matvecs with the transpose $\mtx{B}^*$.
If desired, we can report the SVD of the approximation after
a small amount of additional work:
\[
\widehat{\mtx{B}}_s = (\mtx{Q} \widehat{\mtx{U}}_0) \widehat{\mtx{\Sigma}} \widehat{\mtx{V}}^*
\quad\text{where}\quad
(\widehat{\mtx{U}}_0, \widehat{\mtx{\Sigma}}, \widehat{\mtx{V}}) = \texttt{svd}(\mtx{Q}^* \mtx{B}). 
\]
This step costs another $\mathcal{O}(s^2 n)$ arithmetic operations.

See \cref{alg:rSVD} for pseudocode for the randomized SVD algorithm.
For a dense unstructured matrix $\mtx{B}$, the total cost is
$\mathcal{O}(smn)$. To compare, recall that a dense, economy-size SVD
costs $\mathcal{O}(mn (m \wedge n))$. Classic Krylov subspace methods
are competitive in cost with the randomized SVD, but they often fail
for challenging problem instances~\cite{LLS+17:Algorithm-971}.
In contrast, the randomized SVD and its relatives are bulletproof.

\begin{algorithm}[t]
\begin{algbox}[1]
\caption{\textit{Randomized SVD.}}
\label{alg:rSVD}

\Require Input matrix $\mtx{B} \in \R^{m\times n}$, number $s$ of samples
\Ensure	Factors $\mtx{Q} \in \R^{m \times s}$ and $\mtx{C} \in \R^{s \times n}$
of the approximation $\widehat{\mtx{B}}_s = \mtx{QC}$

\vspace{5pt}
\hrule
\vspace{5pt}

\Function{RandSVD}{$\mtx{B}$}
\State	$\mtx{\Omega} = \texttt{randn}(n,s) \in \R^{n \times s}$ %
	\Comment{Standard normal test matrix}
\State	$\mtx Y = \mtx{B} \mtx{\Omega}$
	\Comment{$s$ matvecs with $\mtx{B}$}
\State $\mtx{Q} = \texttt{orth}(\mtx Y)$
	\Comment{Orthonormal basis for range of $\mtx{Y}$}
\State	$\mtx{C} = \mtx{Q}^* \mtx{B}$
	\Comment{$s$ matvecs with $\mtx{B}^*$}
\State	$(\widehat{\mtx{U}}_0, \widehat{\mtx{\Sigma}}, \widehat{\mtx{V}}) = \texttt{svd}(\mtx{C})$
	\Comment{\textbf{\textcolor{dkgray}{Optional:}} Report SVD of approximation}
\State	$\widehat{\mtx{U}} = \mtx{Q}\widehat{\mtx{U}}_0$
	\Comment{$\widehat{\mtx{B}}_s = \widehat{\mtx{U}}\widehat{\mtx{\Sigma}} \widehat{\mtx{V}}^*$}
\EndFunction
\end{algbox}
\end{algorithm}

\subsubsection{Randomized SVD: Analysis}

Concerning the randomized SVD, the main question is how many test vectors $s$ are needed to obtain a rank-$r$ approximation
that satisfies~\eqref{eqn:low-rank-error-goal}.
We present a result from Halko et al.~\cite[Thm.~10.5]{HMT11:Finding-Structure}
that accurately predicts the performance of the randomized SVD algorithm.

\begin{theorem}[Randomized SVD; Halko et al.~2011] \label{thm:rsvd}
	Consider a matrix $\mtx{B} \in \R^{m \times n}$,
	and fix the target rank $r \leq m \wedge n$.
	When $s \geq r + 2$,
	the randomized SVD method (\Cref{alg:rSVD}) produces a random rank-$s$
	approximation \smash{$\widehat{\mtx{B}}_s$} that satisfies 
    \begin{equation} \label{eqn:rsvd-err-bd}
    \Expect %
    \fnormsq{\mtx{B} - \widehat{\mtx{B}}_s}
    	\leq \left(1 + \frac{r}{s - r - 1}\right) \cdot \sum_{i>r} \sigma^2_i (\mtx B).
    \end{equation}
    On average, to obtain a tolerance $\eps > 0$ in the error bound~\eqref{eqn:low-rank-error-goal},
    it suffices that $s \geq 1 + r + r/\eps$.
\end{theorem}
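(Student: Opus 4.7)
The plan is to combine a purely deterministic error inequality for the projection $\mtx{P}_{\mtx{Y}}\mtx{B}$ with two Gaussian moment computations, following the classical Halko--Martinsson--Tropp template. The first move is to rewrite the error in a way that exposes how $\mtx{\Omega}$ interacts with the top-$r$ singular subspace. Introduce a thin SVD $\mtx{B} = \mtx{U\Sigma V}^*$ and block $\mtx{V} = [\mtx{V}_1, \mtx{V}_2]$ with $\mtx{V}_1 \in \R^{n \times r}$, together with $\mtx{\Sigma} = \diag(\mtx{\Sigma}_1, \mtx{\Sigma}_2)$. Set
\[
\mtx{\Omega}_1 \coloneqq \mtx{V}_1^* \mtx{\Omega} \in \R^{r \times s},
\qquad
\mtx{\Omega}_2 \coloneqq \mtx{V}_2^* \mtx{\Omega} \in \R^{(n-r) \times s}.
\]
Because $\mtx{\Omega}$ is standard normal and $[\mtx{V}_1,\mtx{V}_2]$ is orthogonal, $\mtx{\Omega}_1$ and $\mtx{\Omega}_2$ are independent standard normal matrices. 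In particular, $s \geq r+2$ guarantees that $\mtx{\Omega}_1$ has full row rank almost surely.

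The next step is the key deterministic bound: conditional on $\mtx{\Omega}_1$ having full row rank,
\[
\fnormsq{(\Id - \mtx{P}_{\mtx{Y}})\mtx{B}}
\;\leq\; \fnormsq{\mtx{\Sigma}_2} + \fnormsq{\mtx{\Sigma}_2 \mtx{\Omega}_2 \mtx{\Omega}_1^{\pinv}}.
\]
I would prove this by exhibiting a specific rank-$r$ matrix inside $\range(\mtx{Y})$ that approximates $\mtx{U}_1$: namely, observe that $\mtx{Y} = \mtx{U}_1 \mtx{\Sigma}_1 \mtx{\Omega}_1 + \mtx{U}_2 \mtx{\Sigma}_2 \mtx{\Omega}_2$, so the matrix $\mtx{Y} \mtx{\Omega}_1^{\pinv} \mtx{\Sigma}_1^{-1}$ lies in $\range(\mtx{Y})$ and equals $\mtx{U}_1 + \mtx{U}_2 \mtx{\Sigma}_2 \mtx{\Omega}_2 \mtx{\Omega}_1^{\pinv}\mtx{\Sigma}_1^{-1}$. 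Since $\mtx{P}_{\mtx{Y}}\mtx{B}$ is the orthogonal projection of $\mtx{B}$ onto $\range(\mtx{Y})$, its Frobenius-norm error is at most that of any other matrix with columns in $\range(\mtx{Y})$; choosing such a surrogate and expanding in the unitarily invariant Frobenius norm yields the stated inequality after cancellation.

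With the oracle bound in hand, the proof is finished by taking expectations. Conditioning on $\mtx{\Omega}_1$ and exploiting independence, the identity $\Expect_{\mtx{\Omega}_2}[\mtx{\Omega}_2 \mtx{M} \mtx{\Omega}_2^*] = \trace(\mtx{M}) \cdot \Id$ for a deterministic $\mtx{M}$ gives
\[
\Expect_{\mtx{\Omega}_2} \fnormsq{\mtx{\Sigma}_2 \mtx{\Omega}_2 \mtx{\Omega}_1^{\pinv}}
\;=\; \fnormsq{\mtx{\Sigma}_2} \cdot \fnormsq{\mtx{\Omega}_1^{\pinv}}.
\]
Then I invoke the classical inverse-Wishart moment $\Expect \fnormsq{\mtx{G}^{\pinv}} = r/(s-r-1)$ for an $r \times s$ standard normal matrix $\mtx{G}$, valid exactly when $s \geq r+2$. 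Combining these with $\fnormsq{\mtx{\Sigma}_2} = \sum_{i>r} \sigma_i^2(\mtx{B})$ yields~\eqref{eqn:rsvd-err-bd}, and rearranging $(1 + r/(s-r-1)) \leq 1 + \eps$ gives the sufficient condition $s \geq 1 + r + r/\eps$.

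The main obstacle is the deterministic oracle inequality: choosing the right surrogate matrix in $\range(\mtx{Y})$ and arguing that the Frobenius residual splits cleanly into the two promised terms is the step that does all the linear-algebraic work. The Gaussian moment $\Expect\fnormsq{\mtx{G}^{\pinv}}$ is the other nontrivial input; its finiteness (and the factor $1/(s-r-1)$) is precisely what forces the hypothesis $s \geq r+2$ and dictates the shape of the bound.
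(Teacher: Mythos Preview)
Your proposal is correct and follows essentially the same approach as the paper: the deterministic oracle inequality $\fnormsq{\mtx{B} - \widehat{\mtx{B}}_s} \leq \fnormsq{\mtx{\Sigma}_2} + \fnormsq{\mtx{\Sigma}_2 \mtx{\Omega}_2 \mtx{\Omega}_1^{\pinv}}$ is exactly the paper's Proposition~\ref{prop:rsvd-determ} (which the paper cites rather than proves), and the two Gaussian moment computations you invoke---the conditional expectation over $\mtx{\Omega}_2$ and the inverse-Wishart trace formula $\Expect\fnormsq{\mtx{\Omega}_1^{\pinv}} = r/(s-r-1)$---match the paper's argument line for line. Your sketch of the surrogate-matrix argument for the oracle inequality is in fact more detailed than what the paper provides.
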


This theorem places no restrictions on the input matrix (such as a spectral gap).
To unpack the result, recall that the sum $\sum_{i > r} \sigma_i^2$
on the right-hand side of~\eqref{eqn:rsvd-err-bd}
is the Eckart--Young error~\eqref{eqn:eckart-young} in the best rank-$r$ approximation.
The factor $\eps = r/(s-r-1)$ reflects the loss that we suffer
from constructing the approximation $\widehat{\mtx{B}}_s$
using the random projector $\mtx{P}_{\mtx{Y}}$
instead of the ideal projector $\mtx{P}_r$.
By increasing the number $s$ of samples, we can drive the error tolerance $\eps$ down
as far as we like.  In particular, $s = 2r + 1$ yields a tolerance $\eps = 1$.
This type of approximation might seem too weak to be useful.  But keep in mind
that the main use case for the randomized SVD algorithm
is when the optimal error is tiny because the singular values decay
exponentially~\cite{HMT11:Finding-Structure,TW23:Randomized-Algorithms}.

We will sketch the proof of \Cref{thm:rsvd} to show how the properties of the
random test matrix $\mtx{\Omega}$ enter into the bound.  As with the randomized
power method, the key insight is that the random test matrix $\mtx{\Omega}$
aligns reasonably well with the leading right singular vectors of the
input matrix $\mtx{B}$.  At the same time, the test matrix does not align
too strongly with the trailing right singular vectors.  To quantify these
properties, we depend on some basic facts from random matrix theory.

As an aside, results like \Cref{thm:rsvd} also hold for the
spectral-norm error~\cite[Thm.~10.6]{HMT11:Finding-Structure},
but the proof requires some technical arguments that do not contribute
new insight about the mechanism that drives the randomized SVD algorithm.
Likewise, we can obtain probability bounds for the randomized SVD
after some additional effort; see \cite[Thms.~10.7 and 10.8]{HMT11:Finding-Structure}.

\begin{proof}[Proof of \Cref{thm:rsvd}]
Fix the target rank $r \leq m \wedge n$ and the number $s$ of random test vectors.
We can express the SVD~\eqref{eqn:B-svd} of the input matrix $\mtx{B}$
in block matrix form to isolate the leading $r$ singular directions:
\[
\mtx{B} =
	\begin{bmatrix} \mtx{U}_r & \mtx{U}_{\perp} \end{bmatrix}
	\begin{bmatrix} \mtx{\Sigma}_r & \mtx{0} \\ \mtx{0} & \mtx{\Sigma}_{\perp} \end{bmatrix}
	\begin{bmatrix} \mtx{V}_r & \mtx{V}_{\perp} \end{bmatrix}^*.
\]
Explicitly, $\mtx{U}_r \in \R^{m \times r}$, and $\mtx{\Sigma}_r = \diag(\sigma_1, \dots, \sigma_r)$,
and $\mtx{V}_r \in \R^{n \times r}$.

Next, extract the components of the test matrix $\mtx{\Omega}$ in the basis of right singular vectors:
\[
\mtx{\Omega}_r \coloneqq \mtx{V}_r^* \mtx{\Omega}
\quad\text{and}\quad
\mtx{\Omega}_{\perp} \coloneqq \mtx{V}_{\perp}^* \mtx{\Omega}.
\]
This decomposition is similar to the one we used to analyze the power method.
The matrix $\mtx{\Omega}_r$ describes the alignment of the test matrix with
the leading right singular vectors, while $\mtx{\Omega}_{\perp}$ describes
the alignment with the trailing right singular vectors.

After some linear algebra (omitted), %
we can develop the following (deterministic) bound for the
approximation error~\cite[Thm.~9.1]{HMT11:Finding-Structure}.
For a streamlined proof, see~\cite[Prop.~8.5]{TW23:Randomized-Algorithms}.

\begin{proposition}[Randomized SVD: Approximation error] \label{prop:rsvd-determ}
With notation as above, assume that $\rank(\mtx{\Omega}_r) = r$.
Then the rank-$s$ approximation \smash{$\widehat{\mtx{B}}_s$} produced by \Cref{alg:rSVD}
satisfies
\begin{equation} \label{eqn:rsvd-determ}
\fnormsq{ \mtx{B} - \widehat{\mtx{B}}_s }
	\leq \fnormsq{ \mtx{\Sigma}_{\perp} } + \fnormsq{ \mtx{\Sigma}_{\perp} \mtx{\Omega}_{\perp} \mtx{\Omega}_{r}^\dagger }
\end{equation}
\end{proposition}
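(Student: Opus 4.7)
The plan is to exploit the fact that $\widehat{\mtx{B}}_s = \mtx{P}_{\mtx{Y}} \mtx{B}$ is the best approximation of $\mtx{B}$ whose columns lie in $\range(\mtx{Y})$. Specifically, for any matrix $\mtx{Z} = \mtx{Y}\mtx{F}$ (whose columns lie in $\range(\mtx{Y})$), the contractive property of the orthogonal projector $\mtx{I} - \mtx{P}_{\mtx{Y}}$ yields
\[
\fnormsq{\mtx{B} - \widehat{\mtx{B}}_s} = \fnormsq{(\mtx{I} - \mtx{P}_{\mtx{Y}})(\mtx{B} - \mtx{Y}\mtx{F})} \leq \fnormsq{\mtx{B} - \mtx{Y}\mtx{F}}.
\]
So it is enough to exhibit a single clever choice of $\mtx{F}$ for which $\fnormsq{\mtx{B} - \mtx{Y}\mtx{F}}$ matches the right-hand side of~\eqref{eqn:rsvd-determ}.

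Next I would decompose $\mtx{Y}$ in the basis of singular vectors. Using the block SVD,
\[
\mtx{Y} = \mtx{B}\mtx{\Omega} = \mtx{U}_r \mtx{\Sigma}_r \mtx{\Omega}_r + \mtx{U}_{\perp} \mtx{\Sigma}_{\perp} \mtx{\Omega}_{\perp}.
\]
The natural ``witness'' is $\mtx{F} \coloneqq \mtx{\Omega}_r^{\dagger} \mtx{V}_r^*$, designed to recover the leading rank-$r$ component $\mtx{U}_r \mtx{\Sigma}_r \mtx{V}_r^*$ of $\mtx{B}$. Since $\mtx{\Omega}_r \in \R^{r \times s}$ has full row rank by assumption, the identity $\mtx{\Omega}_r \mtx{\Omega}_r^{\dagger} = \Id_r$ holds, and therefore
\[
\mtx{Y}\mtx{F} = \mtx{U}_r \mtx{\Sigma}_r \mtx{V}_r^* + \mtx{U}_{\perp} \mtx{\Sigma}_{\perp} \mtx{\Omega}_{\perp} \mtx{\Omega}_r^{\dagger} \mtx{V}_r^*.
\]
Subtracting from $\mtx{B} = \mtx{U}_r \mtx{\Sigma}_r \mtx{V}_r^* + \mtx{U}_{\perp} \mtx{\Sigma}_{\perp} \mtx{V}_{\perp}^*$ gives
\[
\mtx{B} - \mtx{Y}\mtx{F} = \mtx{U}_{\perp}\mtx{\Sigma}_{\perp}\bigl(\mtx{V}_{\perp}^* - \mtx{\Omega}_{\perp}\mtx{\Omega}_r^{\dagger}\mtx{V}_r^*\bigr).
\]

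To finish, I would use two orthogonal invariance facts for the Frobenius norm. First, $\mtx{U}_{\perp}$ has orthonormal columns, so it drops out: $\fnormsq{\mtx{U}_{\perp}\mtx{M}} = \fnormsq{\mtx{M}}$. Second, the rows of the residual split cleanly across the orthogonal subspaces spanned by the rows of $\mtx{V}_r^*$ and $\mtx{V}_{\perp}^*$; writing $\mtx{N} \coloneqq \mtx{\Sigma}_{\perp}(\mtx{V}_{\perp}^* - \mtx{\Omega}_{\perp}\mtx{\Omega}_r^{\dagger}\mtx{V}_r^*)$ and using $\fnormsq{\mtx{N}} = \fnormsq{\mtx{N}\mtx{V}_r} + \fnormsq{\mtx{N}\mtx{V}_{\perp}}$ with $\mtx{N}\mtx{V}_r = -\mtx{\Sigma}_{\perp}\mtx{\Omega}_{\perp}\mtx{\Omega}_r^{\dagger}$ and $\mtx{N}\mtx{V}_{\perp} = \mtx{\Sigma}_{\perp}$ yields
\[
\fnormsq{\mtx{B} - \mtx{Y}\mtx{F}} = \fnormsq{\mtx{\Sigma}_{\perp}} + \fnormsq{\mtx{\Sigma}_{\perp}\mtx{\Omega}_{\perp}\mtx{\Omega}_r^{\dagger}}.
\]
Combining with the projection inequality from the first step proves~\eqref{eqn:rsvd-determ}.

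There is no real obstacle here once the right witness $\mtx{F}$ is identified; the proof is an elementary sequence of block-matrix manipulations. The only conceptual content is (i) recognizing that projection minimizes the residual over $\range(\mtx{Y})$, so any feasible $\mtx{F}$ furnishes an upper bound, and (ii) choosing $\mtx{F} = \mtx{\Omega}_r^{\dagger}\mtx{V}_r^*$ to invert exactly the ``signal'' block $\mtx{\Omega}_r$. The full-rank assumption on $\mtx{\Omega}_r$ is used precisely to guarantee $\mtx{\Omega}_r\mtx{\Omega}_r^{\dagger} = \Id_r$; this is the one place the hypothesis enters, and it is also why the subsequent probabilistic analysis needs to control the conditioning of $\mtx{\Omega}_r$ rather than merely its rank.
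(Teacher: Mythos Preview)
Your proof is correct and complete. The paper itself does not prove this proposition; it states the result with the parenthetical ``(omitted)'' and refers the reader to \cite[Thm.~9.1]{HMT11:Finding-Structure} and \cite[Prop.~8.5]{TW23:Randomized-Algorithms}. Your argument---bounding the projection residual by any feasible $\mtx{Y}\mtx{F}$ and selecting the witness $\mtx{F} = \mtx{\Omega}_r^{\dagger}\mtx{V}_r^*$ to cancel the leading block---is precisely the standard proof that appears in those references, so there is nothing to compare.
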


The first term on the right-hand side of~\eqref{eqn:rsvd-determ} reflects the trailing singular values of the matrix.
Every rank-$r$ approximation~\eqref{eqn:eckart-young} suffers this loss.
The second term is the deficit from using the test matrix $\mtx{\Omega}$.
We want the component $\mtx{\Omega}_r$ in the leading directions to be
well conditioned (ideally, an identity matrix).  We want the component
$\mtx{\Omega}_{\perp}$ in the trailing directions to be as small as possible.

It is instructive to compare \Cref{prop:rsvd-determ} with the error bound~\eqref{eqn:rpm-error-decomp}
for the power method when $t = 1$.  Indeed, the quantity $\fnormsq{\mtx{\Omega}_r^\dagger}$
is analogous to the term $\omega_1^2$ in the \hilite{denominator} of~\eqref{eqn:rpm-error-decomp}.
Meanwhile, the quantity $\fnormsq{\mtx{\Sigma}_{\perp} \mtx{\Omega}_{\perp}}$
is analogous with the sum $\sum_{i > 1} \omega_i^2 \lambda_i^{2}$ in the numerator of~\eqref{eqn:rpm-error-decomp}.

Now, we can exploit the rotational invariance of the standard normal test matrix $\mtx{\Omega}$
to evaluate the expectation of the error bound~\eqref{eqn:rsvd-determ}.
Note that the columns of the leading singular vectors $\mtx{V}_r$
and the trailing singular vectors $\mtx{V}_{\perp}$ are mutually orthogonal.
Therefore, the associated components $\mtx{\Omega}_r \in \R^{r \times s}$ and $\mtx{\Omega}_{\perp} \in \R^{(n-r) \times s}$
of the test matrix are \hilite{standard normal matrices} that are statistically \hilite{independent}.
In particular, if $s \geq r$, then $\rank(\mtx{\Omega}_r) = r$ with probability one.
This holds true regardless of the right singular vectors of $\mtx{B}$.

To evaluate the expected deficit in the error bound~\eqref{eqn:rsvd-determ},
we use independence to introduce conditional expectations.  Thus,
\[
\Expect \fnormsq{ \mtx{\Sigma}_{\perp} \mtx{\Omega}_{\perp} \mtx{\Omega}_{r}^\dagger }
	= \Expect_{\mtx{\Omega}_r} \big[ \Expect_{\mtx{\Omega}_{\perp}} \fnormsq{ \mtx{\Sigma}_{\perp} \mtx{\Omega}_{\perp} \mtx{\Omega}_{r}^\dagger } \big]
	= \Expect_{\mtx{\Omega}_r} \big[  \fnormsq{ \mtx{\Sigma}_{\perp} } \cdot \fnormsq{ \mtx{\Omega}_{r}^\dagger } \big].
\]
The second identity follows when we write out the Frobenius norm in coordinates
and find the expectation over the entries of $\mtx{\Omega}_{\perp}$ by direct calculation.
The Frobenius norm of the remaining random matrix can be rewritten as a trace:
\[
\Expect_{\mtx{\Omega}_r} \fnormsq{ \mtx{\Omega}_{r}^\dagger }
	= \Expect_{\mtx{\Omega}_r} \trace\big[ (\mtx{\Omega}_r \mtx{\Omega}_r^*)^{-1} \big]
	= \frac{r}{s-r-1}.
\]
Indeed, since $\mtx{\Omega}_r \in \R^{r \times s}$ is standard normal,
the product $\mtx{W} = \mtx{\Omega}_r\mtx{\Omega}_r^*$
follows the standard $\textsc{wishart}(s, \Id_r)$ distribution with $s$ degrees of freedom.
The calculation of $\Expect \trace[ \mtx{W}^{-1} ]$ is a classical result
from statistics~\cite[Prop.~A.6]{HMT11:Finding-Structure}.

To conclude, take the expectation of \eqref{eqn:rsvd-determ}, and introduce the last two displays:
\[
\Expect \fnormsq{ \mtx{B} - \widehat{\mtx{B}}_s }
	\leq \fnormsq{ \mtx{\Sigma}_{\perp} } + \Expect \fnormsq{ \mtx{\Sigma}_{\perp} \mtx{\Omega}_{\perp} \mtx{\Omega}_{r}^\dagger }
	\leq \left( 1 + \frac{r}{s-r-1} \right) \cdot \fnormsq{ \mtx{\Sigma}_{\perp} }.
\]
Last, we recognize $\fnormsq{ \mtx{\Sigma}_{\perp} } = \sum_{i > r} \sigma_i^2(\mtx{B})$ as the Eckart--Young error.
\end{proof}

\subsubsection{Discussion}

We have shown that the random choice of test matrix plays an integral role
in the randomized SVD algorithm for low-rank matrix approximation.  This example
may seem different in spirit from the randomized initialization for the
power method, but the algorithms are actually close relatives.

Indeed, we can extend the randomized SVD to an algorithm called \hilite{randomized subspace
iteration} by repeated multiplication with the test matrix:

\begin{listbox}
\begin{enumerate}
\item	Sample $\mtx{\Omega} = \begin{bmatrix} \vct{\omega}^{(1)} & \dots & \vct{\omega}^{(s)} \end{bmatrix} \in \R^{n \times s}$
with i.i.d.~standard normal columns.

\item	Set the initial iterate $\mtx{X}_0 = \mtx{\Omega}$.

\item	For $t = 1, 2, 3, \dots, T$, compute
\[
\mtx{Q}_t \coloneqq \texttt{orth}(\mtx{B}\mtx{X}_{t-1})
\quad\text{and}\quad
\mtx{X}_t \coloneqq \mtx{B}^* \mtx{Q}_{t}
\]

\item	Return the approximation $\widehat{\mtx{B}}_s = \mtx{Q}_T \mtx{X}_T^*$.
\end{enumerate}
\end{listbox}

\noindent
The randomized SVD (\Cref{alg:rSVD}) is the special case of this procedure
with $T = 1$.

Unrolling the recurrence, we find that randomized subspace iteration
produces approximations %
\[
\widehat{\mtx{B}}_s = \mtx{Q}_t (\mtx{Q}_t^* \mtx{B})
\quad\text{where}\quad
\mtx{Q}_t = \texttt{orth}( (\mtx{BB}^*)^{t-1} \mtx{B} \mtx{\Omega} )
\quad\text{for $t = 1, 2, 3, \dots$.}
\]
Much as the power method drives its iterates toward the leading eigenspace,
subspace iteration drives $\range(\mtx{Q}_t)$ so that it aligns with $\range(\mtx{U}_r)$,
the leading left singular subspace of $\mtx{B}$.
The range of the random starting matrix $\mtx{\Omega}$ has sufficient alignment with the leading
right singular vectors $\mtx{V}_r$ to begin this process.  Meanwhile, the range of the starting
matrix $\mtx{\Omega}$ is oblique with the trailing right singular vectors $\mtx{V}_{\perp}$,
so they do not interfere too much with the convergence.
See~\cite{HMT11:Finding-Structure,TW23:Randomized-Algorithms}
for the analysis of randomized subspace iteration.

We can also study randomized block Lanczos methods for low-rank matrix
approximation~\cite{RST09:Randomized-Algorithm,HMST11:Algorithm-Principal,MM15:Randomized-Block}.
These algorithms exhibit much faster convergence than randomized subspace iteration,
so they are especially valuable for matrices whose singular values decay very slowly.
For these methods too, the random starting matrix is a critical ingredient.
See~\cite{TW23:Randomized-Algorithms} for a recent survey.

\section{Progress on average}\label{sec:progress_avg}

In the last section, we considered algorithms that start from a random initialization
and drive this point toward a solution to the computational problem.  Of course, there
is no reason to limit the role of probability to the initialization.
This section introduces another template where we make random choices \hilite{at each step}
of an iterative algorithm.  The methods we study share the feature that
each step reduces the error (or some other measure of progress) \hilite{on average}.

\begin{idea}[Progress on average]
	At each step of an (iterative) algorithm, make a random choice so that
	the expected error decreases.
\end{idea}

In optimization, a familiar example of this template is stochastic gradient %
iteration~\cite{Bot10:Large-Scale-Machine}.
This is a randomized variant of gradient descent in which the gradient
is replaced by an unbiased random estimate that is cheaper to compute.
Just as a small step in the direction of the negative gradient reduces the value
of the objective function, a step in the direction of the random gradient
estimate reduces the objective value on average.  Even if the individual
gradient estimates are very poor, we can still guarantee convergence of
the randomized algorithm.

We will study two problems in numerical linear algebra that we can
tackle using randomized algorithms that make progress on average.
First, we study the randomized Kaczmarz iteration for solving
an overdetermined least-squares problem.
Second, we describe the randomly pivoted partial Cholesky
method for computing a low-rank approximation of a psd matrix.

\subsection{Randomized Kaczmarz}
\label{sec:rand-kacz}

Another classic linear algebra problem, dating to the time of Gauss,
is the linear least-squares problem, often used for fitting linear
models to data.  We will focus on the overdetermined
setting where the number of observations (dramatically) exceeds the number
of variables in the model.  This problem arises in a range of applications,
such as statistical estimation, signal processing, machine learning, and even
as a subroutine in other linear algebra and optimization computations.

\begin{problem}[Overdetermined least-squares]
Given a tall matrix $\mtx{A} \in \R^{n \times d}$ with $n \gg d$
and a response vector $\vct{b} \in \R^n$, find a solution to
the overdetermined least-squares problem:
\[
\minimize_{\vct{x} \in \R^d}\quad \tfrac{1}{2} \normsq{\mtx A \vct x - \vct b}_2.
\]
\end{problem}

For large-scale instances, it may not be desirable or even feasible to
tackle the least-squares problem using direct methods, which cost
$\mathcal{O}(d^2 n)$ arithmetic operations.
Instead, we can seek iterative algorithms that approximate the solution
at a lower computational cost.  In particular, we will study methods that 
enforce individual equations. %
We will see that enforcing a \hilite{randomly chosen}
equation leads to a simple, elegant algorithm with
nice convergence guarantees.

\subsubsection{The Kaczmarz iteration}

There are many iterative methods for solving least-squares problems.
Let us introduce a classic algorithm, attributed to Kaczmarz~\cite{Kac37:Angenaherte}.
This method has historically been popular for computed tomography and digital signal
processing applications~\cite{SV09:Randomized-Kaczmarz}.

Consider a tall matrix $\mtx{A} \in \R^{n \times d}$ with $n \gg d$,
and write $\vct{a}_i^*$ for the $i$th \hilite{row} of the matrix.
Choose a response vector $\vct{b} = (b_1, \dots, b_n)^* \in \R^n$.
We can frame the overdetermined least-squares problem as
\begin{equation} \label{eqn:overdetermined-ls-eqns}
\minimize_{\vct{x} \in \R^d}\quad \frac{1}{2} \sum_{i=1}^n \big( \ip{\vct{a}_i}{ \vct{x} } - b_i \big)^2.
\end{equation}
As usual, $\ip{\cdot}{\cdot}$ denotes the standard inner product on $\R^d$.
Each term $i = 1, \dots, n$ in the sum corresponds to the squared residual in a single
equation, and we can develop algorithms that work with the individual equations.

The Kaczmarz iteration begins with an initial iterate $\vct{x}_0 \in \R^d$,
often taken to be the zero vector.  At each iteration $t = 1, 2, 3,\dots$,
we select an equation $j(t)$ and update the previous iterate $\vct{x}_{t-1}$ so this equation holds exactly:
\begin{equation} \label{eqn:kacz-update}
\vct{x}_{t} = \vct{x}_{t-1} - \frac{\ip{\vct{a}_{j(t)}}{ \vct{x}_{t-1} } - b_{j(t)}}{\norm{\vct{a}_{j(t)}}_2^2} \cdot \vct{a}_{j(t)}
\quad\text{for $t = 1, 2, 3, \dots$.}
\end{equation}
A short calculation confirms that $\ip{\vct{a}_{j(t)}}{\vct{x}_t} = b_{j(t)}$.
Each iteration of the Kaczmarz method costs just $\mathcal{O}(d)$ operations,
irrespective of the number $n$ of equations.
As an aside, note that the update amounts to a step in the direction
of the gradient of the $i$th squared residual in~\eqref{eqn:overdetermined-ls-eqns},
so we can interpret the Kaczmarz iteration as a type of gradient descent method.

To design a Kaczmarz algorithm, we need to specify a control rule $j(t)$
for picking the equation at iteration $t$.
In the classical literature, it was most common to repeatedly cycle through
the equations in the order given.
Another alternative is to arrange the equations in random order
and cycle through them. 
A third possibility is to enforce the equation that has the
largest residual, although this rule can be expensive to implement.
These strategies all lead to convergent algorithms,
but it is challenging to understand the \hilite{rate of convergence}
because it depends in a complicated way on the problem data
and the control rule.

\subsubsection{The role of randomness}

To address the limitations of the classic Kaczmarz method,
Strohmer \& Vershynin~\cite{SV09:Randomized-Kaczmarz} proposed a variant
that uses a \hilite{randomized control rule}.
Their idea is that we can make \hilite{progress on average} toward
the solution if we select an equation at random at each step.
To control the variability of the randomized algorithm,
the sampling probabilities should also reflect the ``importance'' of each equation.

With this preface in mind, we describe a random control rule for the Kaczmarz method.
At each iteration $t$, the random row $J(t)$ is drawn \hilite{independently}
from all previous random choices $(J(1), \dots, J(t-1))$.
The distribution of the random row $J(t)$ takes the form
\begin{equation} \label{eqn:rk-control}
\Prob{ J(t) = i } = \frac{\normsq{\vct{a}_i}_2}{\fnormsq{ \mtx{A} }}
\quad\text{for $i = 1, \dots, n$.}
\end{equation}
In other words, the ``importance'' of an equation to the least-squares problem~\eqref{eqn:overdetermined-ls-eqns}
is proportional to its squared norm, for reasons that will become clear in the analysis.
We already encountered a similar distribution in our discussion of approximate
matrix multiplication (\Cref{sec:approx-mtx-mult}).
This is an instance of \hilite{importance sampling}, a variance reduction
technique that was developed in the Monte Carlo approximation literature.

\Cref{alg:Kaczmarz} lists the pseudocode for
this \hilite{randomized Kaczmarz} method.
Note that it takes $\mathcal{O}(dn)$ operations to compute
the sampling probabilities exactly.
Each iteration requires $\mathcal{O}(d)$ operations.

\begin{algorithm}[t]
\begin{algbox}[1]
\caption{\textit{Randomized Kaczmarz.}}
\label{alg:Kaczmarz}

\Require	Matrix $\mtx{A} \in \R^{n \times d}$, response $\vct{b} \in \R^{n}$, initial iterate $\vct{x}_0 \in \R^d$, number $T$ of iterations
\Ensure		Solution estimate $\vct{x}_T \in \R^d$

\vspace{5pt}
\hrule
\vspace{5pt}

\Function{RandKaczmarz}{$\mtx{A}$, $\vct{b}$; $\vct{x}_0$}

\State	$r_i = \norm{\vct{a}_{i}}_2^2$ for $i = 1, \dots, n$
	\Comment $\vct{a}_i^*$ is $i$th \hilite{row} of matrix

\State	$p_i = r_i / \sum_{j=1}^n r_j$
	\Comment Sampling probabilities

\For{$t = 1, 2, 3, \dots, T$}
\State	Sample row index $j(t) \in \{1, \dots, n\}$ from the distribution $(p_i)$
\State	$\vct{x}_t = \vct{x}_{t-1} - (\ip{\vct{a}_{j(t)} }{\vct{x}_{t-1}} - b_i ) \vct{a}_{j(t)} / \normsq{\vct{a}_{j(t)}}_2$
	\Comment Update iterate
\EndFor

\EndFunction
\end{algbox}
\end{algorithm}

\subsubsection{Randomized Kaczmarz: Analysis}

In this section, we give an elegant analysis of the randomized Kaczmarz iteration
due to Strohmer \& Vershynin~\cite{SV09:Randomized-Kaczmarz}.
Following their presentation, we assume that $\mtx{A}$ is injective
and that the least-squares problem is consistent: $\vct{b} \in \range(\mtx{A})$.
Under these extra hypotheses,
the least-squares problem admits a unique solution $\vct{x}_{\star} \in \R^d$ that is exact: $\mtx{A} \vct{x}_{\star} = \vct{b}$.
Thus, we can measure progress of the iterates toward the solution:
\[
\normsq{ \vct{x}_t - \vct{x}_{\star} }_2
\quad\text{for $t = 0,1,2,\dots$.}
\]
We will prove that, on average, the randomized control rule~\eqref{eqn:rk-control}
reduces the squared norm of the error by a constant factor at each iteration.

The analysis shows that the convergence factor of the randomized Kaczmarz method
depends on the \hilite{Demmel condition number}~\cite{Dem88:Probability-Numerical}
of the matrix:
\begin{equation} \label{eqn:kappa-dem}
\kappa_{\dem} \coloneqq \kappa_{\dem}(\mtx{A}) \coloneqq \frac{ \fnorm{ \mtx{A} } }{ \sigma_{\min}(\mtx{A}) }.
\end{equation}
Here, $\sigma_{\min}(\mtx{A})$ is the $d$th largest singular value of the tall matrix $\mtx{A} \in \R^{n \times d}$.
For comparison, recall the standard condition number $\kappa \coloneqq \sigma_{\max} / \sigma_{\min}$
for linear systems.
We have the inequalities $\kappa \leq \kappa_{\dem} \leq \sqrt{d} \kappa$ for every matrix with $d$ columns.
Much as the intrinsic dimension refines the notion of the rank of a psd matrix,
the Demmel condition number $\kappa_{\dem}$ accounts for the number of energetic directions in the
range of the matrix, whereas the standard condition number $\kappa$ does not.

\begin{theorem}[Randomized Kaczmarz; Strohmer \& Vershynin 2009] \label{thm:randkacz}
Consider a consistent system of linear equations $\mtx{A} \vct{x} = \vct{b}$
where the matrix $\mtx{A}$ is injective, and let $\vct{x}_{\star}$ be the (unique) solution.
For each $t = 1, 2, 3, \dots$, the randomized Kaczmarz iteration (\Cref{alg:Kaczmarz})
yields the error bound
\[
\Expect \normsq{ \vct{x}_t - \vct{x}_{\star} }_2
	\leq \big(1 - \kappa_{\dem}^{-2} \big)^t \cdot \normsq{ \vct{x}_0 - \vct{x}_{\star} }_2.
\]
\end{theorem}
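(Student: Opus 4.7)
The plan is to follow the classical Strohmer--Vershynin argument, which rests on a clean geometric observation: each Kaczmarz update~\eqref{eqn:kacz-update} is an orthogonal projection of the current iterate onto the affine hyperplane $\{\vct{x} : \ip{\vct{a}_{J(t)}}{\vct{x}} = b_{J(t)}\}$. Because the system is consistent, $b_{J(t)} = \ip{\vct{a}_{J(t)}}{\vct{x}_\star}$, so the solution $\vct{x}_\star$ lies on every such hyperplane. Subtracting $\vct{x}_\star$ from both sides of the update, the error $\vct{e}_t \coloneqq \vct{x}_t - \vct{x}_\star$ is obtained by projecting $\vct{e}_{t-1}$ onto the linear hyperplane orthogonal to $\vct{a}_{J(t)}$.

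First, I would invoke the Pythagorean theorem for this orthogonal projection to obtain the deterministic per-step identity
\[
\normsq{\vct{e}_t}_2 = \normsq{\vct{e}_{t-1}}_2 - \frac{\absip{\vct{a}_{J(t)}}{\vct{e}_{t-1}}^2}{\normsq{\vct{a}_{J(t)}}_2}.
\]
Next, I would condition on the history $\mathcal{F}_{t-1} = \sigma(J(1),\dots,J(t-1))$, so that $\vct{e}_{t-1}$ is fixed and $J(t)$ is drawn from the importance-sampling distribution~\eqref{eqn:rk-control}. The crucial point is that this distribution is designed precisely so that the reciprocals $\normsq{\vct{a}_{J(t)}}_2^{-1}$ cancel against the sampling weights:
\[
\Expect\bigg[\frac{\absip{\vct{a}_{J(t)}}{\vct{e}_{t-1}}^2}{\normsq{\vct{a}_{J(t)}}_2} \,\bigg|\, \mathcal{F}_{t-1}\bigg]
= \sum_{i=1}^n \frac{\normsq{\vct{a}_i}_2}{\fnormsq{\mtx{A}}} \cdot \frac{\absip{\vct{a}_i}{\vct{e}_{t-1}}^2}{\normsq{\vct{a}_i}_2}
= \frac{\normsq{\mtx{A}\vct{e}_{t-1}}_2}{\fnormsq{\mtx{A}}}.
\]
This telescoping cancellation is the reason for using squared-row-norm sampling; it is really the only nontrivial step in the proof.

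Finally, since $\mtx{A}$ is injective, I would apply the singular-value lower bound $\normsq{\mtx{A}\vct{z}}_2 \geq \sigma_{\min}^2(\mtx{A}) \normsq{\vct{z}}_2$ with $\vct{z} = \vct{e}_{t-1}$, which yields
\[
\Expect\big[\normsq{\vct{e}_t}_2 \,\big|\, \mathcal{F}_{t-1}\big]
\leq \bigg(1 - \frac{\sigma_{\min}^2(\mtx{A})}{\fnormsq{\mtx{A}}}\bigg)\normsq{\vct{e}_{t-1}}_2
= \big(1 - \kappa_{\dem}^{-2}\big) \normsq{\vct{e}_{t-1}}_2,
\]
by the definition~\eqref{eqn:kappa-dem} of the Demmel condition number. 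Taking total expectation and iterating $t$ times via the tower property gives the stated geometric decay.

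There is no real obstacle; the only thing to be careful about is verifying that the sampling weights cancel to produce exactly $\fnormsq{\mtx{A}}^{-1}\normsq{\mtx{A}\vct{e}_{t-1}}_2$, since this is what turns a naive bound depending on $\max_i \normsq{\vct{a}_i}_2$ into one depending on the condition number $\kappa_{\dem}$. If one wanted probability (not just expectation) bounds, one would need additional concentration arguments; but the expectation statement we were asked to prove follows immediately from the three ingredients above.
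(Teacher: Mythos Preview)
Your proposal is correct and follows essentially the same argument as the paper: both express the Kaczmarz update as an orthogonal projection of the error onto the hyperplane $\vct{a}_{J(t)}^\perp$, compute the conditional expectation of the per-step decrement using the importance-sampling cancellation to obtain $\fnormsq{\mtx{A}}^{-1}\normsq{\mtx{A}\vct{e}_{t-1}}_2$ (the paper writes this via the expected projector $\Expect[\mtx{P}_{J(t)}] = \fnormsq{\mtx{A}}^{-1}\mtx{A}^*\mtx{A}$, which is the same computation in operator form), and then invoke $\sigma_{\min}^2(\mtx{A})$ and the tower rule to iterate. The only difference is notational.
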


Before turning to the proof, let us discuss what this result means~\cite[Sec.~2.1]{SV09:Randomized-Kaczmarz}.
For a tolerance $\eps > 0$, we can bound the expected number of iterations
that suffice to drive the error below the tolerance.
\[
T \geq 2 \kappa_{\dem}^2 \log( 1 /\eps)
\quad\text{implies}\quad
\Expect \normsq{ \vct{x}_T - \vct{x}_{\star} }_2
	\leq \eps^2 \norm{ \vct{x}_0 - \vct{x}_{\star} }_2.
\]
Suppose that the standard condition number $\kappa = \kappa(\mtx{A})$ is a constant
that does not depend on the matrix dimensions.
Then $\kappa_{\dem}^2 = \mathcal{O}(d)$.
We determine that $T = \mathcal{O}(d \log(1/\eps))$ iterations suffice to
obtain $\eps$-error, relative to the initial guess.
If the sampling probabilities in the control rule are given, then
the cost of the whole algorithm is just $\mathcal{O}(d^2 \log(1/\eps))$ operations,
in contrast to the $\mathcal{O}(n d^2)$ cost of a direct method.
Indeed, we may not even need to enforce all of the equations!

\begin{proof}[Proof of \Cref{thm:randkacz}]
We begin with a short linear algebra calculation
that describes the evolution of the error in the
deterministic Kaczmarz iteration.
Since $\vct{x}_{\star}$ satisfies the linear equations,
we have $\ip{\vct{a}_{i}}{\vct{x}_{\star}} = b_i$ for each row $i$.
For each iteration $t = 0, 1, 2, \dots$,
define the error vector $\vct{e}_t \coloneqq \vct{x}_t - \vct{x}_{\star}$.
The Kaczmarz update~\eqref{eqn:kacz-update} ensures that the error
vector satisfies
\[
\begin{aligned}
\vct{e}_t \coloneqq \vct{x}_t - \vct{x}_{\star}
	&= (\vct{x}_{t-1} - \vct{x}_{\star})
	- \frac{\ip{\vct{a}_{j(t)}}{ \vct{x}_{t-1} - \vct{x}_{\star}}}{\normsq{ \vct{a}_{j(t)}}_2} \cdot \vct{a}_{j(t)} \\
	&= \vct{e}_{t-1} - \frac{\vct{a}_{j(t)} \vct{a}_{j(t)}^*}{\normsq{\vct{a}_j}_2} \cdot \vct{e}_{t-1}
	\eqqcolon \big(\Id - \mtx{P}_{j(t)} \big) \vct{e}_{t-1}.
\end{aligned}
\]
For each row index $i$, we define the orthogonal projector $\mtx{P}_{i} \coloneqq \vct{a}_i \vct{a}_i^* / \normsq{\vct{a}_i}_2$
onto the span of $\vct{a}_i$.
Taking the squared norm of the error evolution equation, %
\begin{equation} \label{eqn:kacz-error-redux}
\normsq{ \vct{e}_t }_2
	= \normsq{ \big(\Id - \mtx{P}_{j(t)} \big) \vct{e}_{t-1} }_2
	= \lip{ \vct{e}_{t-1} }{ \big(\Id - \mtx{P}_{j(t)} \big) \vct{e}_{t-1} }.
\end{equation}
We can bound the reduction in the error by understanding how much the projection onto the row $j(t)$ cuts down the residual.
When $j(t)$ is deterministic, this argument is challenging.

Instead, we implement the randomized control rule~\eqref{eqn:rk-control}.
In this case, the projector $\mtx{P}_{J(t)}$ is a random matrix.
Averaging with respect to the randomness in $J(t)$,
\[
\Expect_{J(t)}[ \mtx{P}_{J(t)} ] = \sum_{i = 1}^n \mtx{P}_i \cdot \Prob{ J(t) = i }
	= \sum_{i = 1}^n \frac{\vct{a}_i \vct{a}_i^*}{\normsq{ \vct{a}_i }_2 } \cdot \frac{\normsq{ \vct{a}_i }_2 }{\fnormsq{\mtx{A}}} %
	= \frac{1}{\fnormsq{\mtx{A}}} \mtx{A}^* \mtx{A}.
\]
The importance sampling distribution for the row index is designed to cancel out the squared norms.
As a consequence, the expected projector is proportional to the Gram matrix $\mtx{A}^* \mtx{A}$,
so the convergence rate of the randomized Kaczmarz method will reflect
the properties of the input matrix $\mtx{A}$.

Taking the expectation of~\eqref{eqn:kacz-error-redux} and exploiting linearity,
\[
\begin{aligned}
\Expect_{J(t)} \normsq{ \vct{e}_t }_2
	&= \lip{ \vct{e}_{t-1} }{ \big(\Id - \Expect_{J(t)}[ \mtx{P}_{J(t)}] \big) \vct{e}_{t-1} }
	= \lip{ \vct{e}_{t-1} }{ \big(\Id - \fnorm{\mtx{A}}^{-2} \mtx{A}^* \mtx{A} \big) \vct{e}_{t-1} } \\
	&\leq \lambda_1\big( \Id - \fnorm{\mtx{A}}^{-2} \mtx{A}^* \mtx{A} \big) \cdot \normsq{\vct{e}_{t-1}}_2
	= \left( 1 - \frac{\sigma_{\min}(\mtx{A})^2}{\fnormsq{\mtx{A}}} \right) \cdot \normsq{\vct{e}_{t-1}}_2.
\end{aligned}
\]
Recalling the definition~\eqref{eqn:kappa-dem} of the Demmel condition number,
we reach
\begin{equation} \label{eqn:rk-supermartingale}
\Expect_{J(t)} \normsq{ \vct{e}_t }_2 \leq \big(1 - \kappa_{\dem}^{-2} \big) \cdot \normsq{ \vct{e}_{t-1} }_2.
\end{equation}
\hilite{At each iteration, the randomized Kaczmarz method makes progress on average.}
Finally, we take the total expectation using the tower rule
and the independence of control sequence: 
\[
\Expect \normsq{ \vct{e}_t }_2 \leq \big(1 - \kappa_{\dem}^{-2} \big) \cdot \Expect \normsq{ \vct{e}_{t-1} }_2.
\]
Unrolling this recurrence, we obtain the stated result.
\end{proof}

The proof hinges on the formula~\eqref{eqn:rk-supermartingale},
which states that the error sequence $(\normsq{\vct{e}_t}_2 : t = 0, 1, 2, \dots)$
composes a positive supermartingale.
This observation has some striking implications for the long-term behavior of the
randomized Kaczmarz method~\cite{CP12:Almost-Sure-Convergence}.  In particular,
a standard argument using the Borel--Cantelli lemma provides that
\[
(1 - \varrho)^{-t} \cdot \normsq{ \vct{e}_t }_2 \to 0
\quad\text{almost surely for each $\varrho < \kappa_{\dem}^{-2}$.}
\]
We can also obtain uniform concentration inequalities for the error sequence
by an application of Ville's inequality~\cite[Thm.~26.12]{Tro23:Probability-Theory-LN}.

The argument also shows that the precise distribution~\eqref{eqn:rk-control}
of the randomly chosen equation plays an important role in the performance
of the algorithm.
Instead, suppose that we sampled an equation uniformly at random at each step.
Then we obtain an error recurrence of the form
\[
\Expect_{J(t)} \normsq{\vct{e}_t}_2
	\leq \left( 1 - \frac{\sigma_{\min}(\mtx{A})^2}{n \cdot \max\nolimits_{i} \normsq{\vct{a}_i}_2} \right) \cdot
	\normsq{\vct{e}_{t-1}}_2.
\]
Since $\fnormsq{\mtx{A}} \leq n \cdot \max_i \normsq{\vct{a}_i}_2$,
the weighted sampling probabilities always result in faster convergence
unless the rows all share the same norm.

\subsubsection{Randomized Kaczmarz: Discussion}

The paper~\cite{SV09:Randomized-Kaczmarz} of Strohmer \& Vershynin
generated a substantial body of follow-up work.
Their result, \Cref{thm:randkacz}, states that the randomized Kaczmarz
method (\Cref{alg:Kaczmarz}) converges for a consistent linear system
with an injective matrix.
Zouzias \& Freris~\cite{ZF13:Randomized-Extended} developed a variant
of the randomized Kaczmarz algorithm that can handle inconsistent systems
where the matrix is not injective.
Needell \& Tropp~\cite{NT14:Paved-Good} studied a generalization
of the randomized block Kaczmarz method that enforces several equations
at each iteration, similar to the mini-batching strategy used in
stochastic gradient methods for machine learning.
Needell et al.~\cite{NSW16:Stochastic-Gradient} also demonstrated
that the randomized Kaczmarz method is a particular instance of
stochastic gradient iteration.

The randomized Kaczmarz algorithm is closely related to other
types of randomized algorithms for least-squares problems.
Leventhal \& Lewis~\cite{LL10:Randomized-Methods} observed that we can
design and analyze \hilite{randomized coordinate descent} algorithms using
the same principles; these methods work with columns of the input matrix
rather than the rows.
Gower \& Richtarik~\cite{GR15:Randomized-Iterative}
studied a general class of iterative algorithms for solving linear systems,
called \hilite{sketch-and-project methods}.
This class includes both randomized Kaczmarz
and randomized coordinate descent as special cases.

In most applications, the randomized Kaczmarz method is not a very
competitive algorithm for solving least-squares problems.
Nevertheless, it provides a nice illustration of the progress-on-average paradigm.
The papers cited above indicate that related ideas arise in other types of
randomized iterative algorithms.
In the next section, we consider a second example in numerical linear algebra.

\subsection{Randomly pivoted Cholesky}
\label{sec:rpc}

The randomized SVD algorithm is designed to produce a low-rank approximation
of a (rectangular) matrix, accessed via matvecs.  In some applications, however,
the cost of forming the input matrix and applying it to a vector is prohibitive.
Here, we consider the problem of approximating a psd matrix where we pay
to access each entry.

\begin{problem}[Low-rank psd approximation from entries]
Consider a psd matrix $\mtx{A} \in \Sym_n^+(\R)$ that we access via entry
evaluations: $(j, k) \mapsto a_{jk}$.  The task is to produce a low-rank
psd approximation of $\mtx{A}$, while limiting the number of entry
evaluations.
\end{problem}

The key example is a \hilite{kernel matrix},
which tabulates the pairwise similarities among a family of $n$ data points.
To compute an entry of a kernel matrix, we must compare two data points,
so the cost of generating and storing the matrix is $\mathcal{O}(n^2)$,
which is \hilite{quadratic} in the number of data points.
Thus, we prefer to avoid generating the entire kernel matrix.

Kernel methods reduce data analysis problems to
linear algebra problems with the kernel matrix.
One common strategy for scaling kernel methods to large data sets
is to replace the kernel matrix by a low-rank approximation.
The goal is to produce the best approximation of the kernel matrix
that we can while evaluating as few entries of the kernel matrix as possible.

In this section, we will describe a simple but powerful randomized algorithm
for solving the low-rank psd approximation problem.
At each step, this algorithm randomly updates the approximation
so that the approximation error decreases on average.
Our treatment closely follows the paper~\cite{CETW23:Randomly-Pivoted}.

\subsubsection{Column Nystr{\"o}m approximation}

Let $\mtx{A} \in \Sym_n^+(\R)$ be a psd matrix that we access by evaluating individual entries.
One way to produce an approximation after querying a small number of entries
is to form the approximation from a small number of (adaptively chosen) {columns}.

As it happens, there is a canonical way to approximate a psd matrix
from a specified set of columns.
Given a list $\set{S} \subseteq \{1, 2, 3, \dots, n\}$ of column indices,
we can form the \hilite{column Nystr{\"o}m approximation}:
\[
\mtx{A}\langle\set{S}\rangle \coloneqq \mtx{A}(:, \set{S}) \mtx{A}(\set{S}, \set{S})^{\dagger} \mtx{A}(\set{S}, :).
\]
Since $\mtx{A}$ is psd, the rows $\mtx{A}(\set{S}, :)$ are the (conjugate) transpose
of the columns $\mtx{A}(:, \set{S})$.
We use the colon operator to indicate a row or column submatrix.

The column Nystr{\"o}m approximation has several remarkable properties:

\begin{enumerate}

\item	The range of the approximation coincides with the span of the distinguished columns of the target matrix: $\range(\mtx{A}\langle\set{S}\rangle) = \range(\mtx{A}(:,\set{S}))$.

\item	In the semidefinite order, the approximation is dominated by the target matrix: $\mtx{0} \psdle \mtx{A}\langle\set{S}\rangle \psdle \mtx{A}$.
\end{enumerate}

\noindent
Moreover, among all approximations that satisfy (1)--(2), the column Nystr{\"o}m approximation is maximal
with respect to the semidefinite order~\cite[Thm.~5.3]{And05:Schur-Complements}.

We measure the error in the column Nystr{\"o}m approximation
with respect to the trace norm $\norm{\cdot}_{*}$:
\begin{equation} \label{eqn:psd-trace-error}
\norm{ \mtx{A} - \mtx{A}\langle\set{S}\rangle }_{*}
	= \trace( \mtx{A} - \mtx{A}\langle\set{S}\rangle ).
\end{equation}
Owing to property (2), the residual is always psd, so the trace norm agrees with the trace.
Our goal is find a set $\set{S}$ of $k$ columns that make the trace-norm
error~\eqref{eqn:psd-trace-error} as small as possible.

\subsubsection{Pivoted partial Cholesky}

We can compute a column Nystr{\"o}m approximation using a classic
technique from numerical linear algebra: \hilite{the pivoted partial Cholesky algorithm}.
This procedure incrementally forms a psd approximation of a psd matrix
by eliminating one column at a time.  For example, if we choose to eliminate
the first column of the psd matrix $\mtx{A}_0$, the Cholesky update takes the form
\[
\arraycolsep=5pt\def\arraystretch{1.5}
\mtx{A}_0 = \lt[ \begin{array}{c|c}
a_{11} & \vct a_{21}^*  \\ \hline
\vct a_{21} &\mtx A_{22}
\end{array}\rt]
\qquad \xmapsto{\text{update} } \qquad
\mtx A_1 = \lt[ \begin{array}{c|c}
0 & \vct 0  \\ \hline
\vct 0 &\mtx A_{22} - \frac{\vct a_{21} \vct a_{21}^* }{a_{11}}
\end{array}\rt]
\]
By direct calculation, you can check that the updated matrix $\mtx{A}_1$
remains psd.

Here is a conceptual description of the pivoted Cholesky algorithm
for a psd input matrix $\mtx{A} \in \Sym_n^+(\R)$.
Set the initial residual equal to the input matrix: $\mtx{A}_0 = \mtx{A}$.
Set the initial approximation equal to the zero matrix: $\smash{\widehat{\mtx{A}}_0} = \mtx{0}$.
At each step $t = 1, 2, 3, \dots, k$, we select a column index $s_t \in \{1, 2, 3, \dots, n\}$,
called a \hilite{pivot}.
We update the approximation using the $s_t$ column of the residual,
and we eliminate the $s_t$ column from the residual:
\begin{align}
\widehat{\mtx{A}}_t &\coloneqq \widehat{\mtx{A}}_{t-1} + \frac{\mtx{A}_{t-1}(:, s_t) \mtx{A}_{t-1}(s_t, :)}{\mtx{A}_{t-1}(s_t, s_t)}; \label{eqn:chol-approx} \\
{\mtx{A}}_t &\coloneqq {\mtx{A}}_{t-1} - \frac{\mtx{A}_{t-1}(:, s_t) \mtx{A}_{t-1}(s_t, :)}{\mtx{A}_{t-1}(s_t, s_t)}.
\label{eqn:chol-resid}
\intertext{As we will see, the diagonal of the residual plays a key role.
We can track it with the formula}
\diag( \mtx{A}_t ) &= \diag( \mtx{A}_{t-1} ) - \frac{1}{\mtx{A}_{t-1}(s_t,s_t)} \cdot \abssq{\mtx{A}_{t-1}(:, s_t)}. \label{eqn:chol-diag}
\end{align}
The function $\abssq{\cdot} : \R^n \to \R^n_+$ returns the squared magnitudes of the entries of a vector.

\begin{fact}[Cholesky meets Nystr{\"o}m]
Suppose that we apply the pivoted partial Cholesky algorithm~\eqref{eqn:chol-approx}--\eqref{eqn:chol-resid} to a psd matrix $\mtx{A}$,
and we select the pivot set $\set{S} = \{s_1, \dots, s_k\}$ in any order.
The resulting matrix approximation $\widehat{\mtx{A}}_k$ equals the Nystr{\"o}m approximation $\mtx{A}\langle\set{S}\rangle$.
\end{fact}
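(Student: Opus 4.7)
The plan is induction on $k$, the number of Cholesky steps. The base case $k=0$ is immediate, since $\widehat{\mtx{A}}_0 = \mtx{0} = \mtx{A}\langle\emptyset\rangle$ by convention. Before tackling the inductive step, I would record the invariant $\mtx{A}_t = \mtx{A} - \widehat{\mtx{A}}_t$ for every $t$, which is a two-line consequence of adding \eqref{eqn:chol-approx} to \eqref{eqn:chol-resid}. Combining the invariant with the inductive hypothesis $\widehat{\mtx{A}}_{k-1} = \mtx{A}\langle \set{S}_{k-1}\rangle$, the step-$k$ update becomes
\[
\widehat{\mtx{A}}_k \;=\; \mtx{A}\langle \set{S}_{k-1}\rangle \;+\; \mtx{R}\langle\{s_k\}\rangle, \quad\text{where}\quad \mtx{R} \coloneqq \mtx{A} - \mtx{A}\langle \set{S}_{k-1}\rangle.
\]
So the entire inductive step reduces to a single algebraic identity about column Nystr{\"o}m approximations.

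Specifically, I would prove the \emph{Nystr{\"o}m additivity (quotient) identity}
\[
\mtx{A}\langle \set{S}\cup\{j\}\rangle \;=\; \mtx{A}\langle \set{S}\rangle \;+\; \bigl(\mtx{A} - \mtx{A}\langle \set{S}\rangle\bigr)\langle\{j\}\rangle \quad\text{for each $j\notin\set{S}$.}
\]
To verify it, permute indices so that $\set{S} = \{1,\dots,m\}$ and $j = m+1$, and partition $\mtx{A}$ in $3\times 3$ block form with blocks of sizes $m$, $1$, and $n-m-1$. In the generic case where $\mtx{A}(\set{S},\set{S})$ is invertible, the Banachiewicz block-inverse formula expresses $\mtx{A}(\set{S}\cup\{j\},\set{S}\cup\{j\})^{-1}$ in terms of $\mtx{A}(\set{S},\set{S})^{-1}$ and the scalar Schur complement $\alpha \coloneqq \mtx{R}(j,j)$. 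Plugging this into the defining formula for $\mtx{A}\langle\set{S}\cup\{j\}\rangle$ and multiplying out, the only nontrivial check is that the bottom-right block matches; there the cross-terms collapse into the rank-one quantity $\vct{c}\vct{c}^*/\alpha$ contributed by the new pivot, where $\vct{c} = \mtx{R}(\set{S}^c\setminus\{j\}, j)$.

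The main obstacle is the degenerate case where $\mtx{A}(\set{S},\set{S})$ or the scalar $\alpha$ fails to be invertible, so the pseudoinverse is doing real work. For the pivot $\alpha$ itself, psd-ness of $\mtx{R}$ forces $\alpha = 0$ to imply $\mtx{R}(:,j) = \vct{0}$, which in turn means that adjoining $j$ changes neither side: the Cholesky update vanishes (by the $0/0$ convention), and $\mtx{A}\langle\set{S}\cup\{j\}\rangle = \mtx{A}\langle\set{S}\rangle$ since the added column is linearly dependent on $\mtx{A}(:,\set{S})$. For general rank deficiency of $\mtx{A}(\set{S},\set{S})$, the cleanest route is to perturb $\mtx{A} \rightsquigarrow \mtx{A} + \varepsilon\Id$, establish the additivity identity in the strictly positive-definite regime via the Banachiewicz formula above, and pass to the limit $\varepsilon \downarrow 0$ using continuity of the pseudoinverse on strata of constant rank. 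This delivers the identity in full generality and closes the induction.
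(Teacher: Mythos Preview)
The paper states this result as a \textbf{Fact} without proof, so there is no argument to compare against; your inductive approach via the Schur-complement quotient identity is the standard one and is correct in outline.

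One remark on the degenerate-case handling. Your $\varepsilon$-perturbation argument is both unnecessary and imprecisely justified: as $\varepsilon\downarrow 0$ the rank of $(\mtx{A}+\varepsilon\Id)(\set{S},\set{S})$ drops, so you are \emph{not} on a constant-rank stratum and the pseudoinverse is not continuous there. The cleaner observation is that your treatment of $\alpha=0$ already suffices. If every pivot satisfies $\mtx{A}_{t-1}(s_t,s_t)>0$, then these pivots are precisely the diagonal entries in an $\mtx{L}\mtx{D}\mtx{L}^*$ factorization of $\mtx{A}(\set{S}_k,\set{S}_k)$, so that principal submatrix is automatically invertible and the Banachiewicz formula applies directly at every step. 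Any zero pivot is, as you note, a no-op on both sides and can be discarded. Thus no limiting argument is needed.
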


\subsubsection{Cholesky: Evaluating fewer entries}

To address our computational problem, we must use an alternative formulation
of the pivoted partial Cholesky algorithm that avoids unnecessary entry evaluations.
To do so, we maintain the approximation in factored form:
\[
\widehat{\mtx{A}}_t = \mtx{F}_t \mtx{F}_t^*
\quad\text{for $t = 0, 1,2, \dots, r$.}
\]
The initial factor $\mtx{F}_0 = \mtx{0}$.  At each step $t$, we select a new column index $s_t$.
We extract the $s_t$ column from the input matrix and we subtract the $s_t$ column of the previous
approximation to obtain the $s_t$ column of the residual:
\[
\vct{c}_t \coloneqq \mtx{A}(:, s_t) - \widehat{\mtx{A}}_{t-1}(:, s_t)
	= \mtx{A}(:, s_t) - \mtx{F}_{t-1} \cdot (\mtx{F}_{t-1}(s_t, :))^*.
\]
We rescale this vector by the square-root of the diagonal entry $\mtx{A}_{t-1}(s_t, s_t) = \vct{c}_t(s_t)$,
and append the resulting vector to the factor matrix: 
\[
\mtx{F}_t \coloneqq \left[ \begin{array}{c|c} \mtx{F}_{t-1} & \vct{c}(s_t)^{-1/2} \vct{c}_t \end{array} \right].
\]
You can check that this procedure yields the same output as the standard
version of the Cholesky algorithm~\eqref{eqn:chol-approx}--\eqref{eqn:chol-resid}.

In each iteration, we only interact with the input matrix $\mtx{A}$ by reading
off the entries of the $s_t$ column.  After $k$ iterations, we have expended
$\mathcal{O}(k^2 n)$ arithmetic operations and $\mathcal{O}(kn)$ storage.

\subsubsection{Pivot selection rules}

How can we select pivots in the partial Cholesky method to achieve
the best approximation?  This problem is tricky because we only have
access to the current approximation, and we need to pay every time
we evaluate an entry of the input matrix.

One simple strategy is \hilite{uniform random pivoting}:
\[
s_t \sim \uniform\{1,2,3, \dots, n\}
\quad\text{for each $t = 1, \dots, k$.}
\]
This approach was proposed in~\cite{WS01:Using-Nystrom}, and it remains popular in the machine learning literature.
It is justified by the intuition that data points might represent
an i.i.d.~sample from a population, so one is just as good as another.

Another classic strategy is \hilite{greedy pivoting}~\cite{Hig90:Analysis-Cholesky}:
\[
s_t \in \arg\min \{ \mtx{A}_{t-1}(s, s) : s = 1, \dots, n \}
\quad\text{for each $t = 1, \dots, k$.}
\]
In other words, we find the largest diagonal of the residual,
and we select the associated column as the pivot.
To motivate this approach, recall that the entries of a psd matrix $\mtx{A}$
satisfy the inequality $\abssq{a_{ij}} \leq a_{ii} a_{jj}$.  Therefore, the
large diagonal entries of the residual point toward the columns that possibly
contain large entries (but may not). %

The uniform pivoting strategy and the greedy pivoting strategy
are both somewhat effective, but they also have serious failure modes.
The uniform random strategy is pure \hilite{exploration}
without any adaptivity to the matrix.
The greedy strategy is pure \hilite{exploitation},
governed entirely by the incomplete information in
the largest diagonal entry.

\subsubsection{The role of randomness}

Instead, we consider a random pivoting strategy that balances exploitation
and exploration.  We proceed from the intuition that the %
diagonal entries of the residual provide hints about which columns
might be significant.  Since this information is imperfect, we do
not want to place too much reliance on it.
Therefore, at each step of the iteration,
we sample a random column index
in proportion to the size of the diagonal entries.

More precisely, the \hilite{randomly pivoted Cholesky} algorithm uses the
following selection rule.  At iteration $t = 1, 2, 3, \dots, k$,
we draw the column index $s_t$ from the probability distribution
induced by the diagonal of the residual:
\begin{equation} \label{eqn:rpc-sampling}
\Prob{ s_t = j } = \frac{\mtx{A}_{t-1}(j, j)}{\trace( \mtx{A}_{t-1} )}
\quad\text{for $j = 1, 2, 3, \dots, n$.}
\end{equation}
This is another example of an importance sampling procedure.
We will see that this random pivot rule reduces the trace
of the residual \hilite{on average}.  The analysis also
highlights why this sampling distribution is a natural choice.

\Cref{alg:rpc} provides pseudocode for this approach.
Recall that we can track the diagonal entries of the residual
inexpensively using the formula~\eqref{eqn:chol-diag}.
To produce a rank-$k$ approximation, the algorithm only
reveals $(k+1) n$ entries of the input matrix:
its diagonal and the $r$ pivot columns.
We spend $\mathcal{O}(k^2 n)$ arithmetic to compute the
factors of the approximation, which require $\mathcal{O}(kn)$
storage.

\begin{algorithm}[t]
\begin{algbox}[1]
\caption{\textit{Randomly pivoted partial Cholesky.}}
\label{alg:rpc}

\Require	Psd matrix $\mtx{A} \in \Sym_n^+(\R)$; approximation rank $k$
\Ensure		Columns $\{s_1, \dots, s_k\}$; factor $\mtx{F} \in \R^{n \times k}$ of rank-$k$ approximation $\mtx{\widehat{A}}_k = \mtx{FF}^*$

\vspace{5pt}
\hrule
\vspace{5pt}

\Function{RPCholesky}{$\mtx{A}$; $k$}
\State	$\mtx{F} = \mtx{0}_{n \times k}$
\Comment	Factor of initial approximation
\State	$\vct{d} = \diag( \mtx{A} )$
\Comment	$n$ entry evals

\For{$t = 1,2, 3, \dots, k$}
\State	Sample $s_t \sim \vct{d} / \sum_{j=1}^n \vct{d}(j)$
\Comment	Sample w.r.t.~diagonal of current residual

\State	$\vct{c} = \mtx{A}(:, s_t) - \mtx{F} \cdot (\mtx{F}(s_t, :))^*$
\Comment	Column of residual via $n$ entry evals

\State	$\mtx{F}(:, t) = \vct{c} / (\vct{c}(s_t)^{1/2})$
\Comment	Update factor of approximation

\State	$\vct{d} = \vct{d} - \abssq{\mtx{F}(:,t)}$
\Comment	Update diagonal of residual

\State 	$\vct{d} = \max\{\vct{d}, \vct{0}\}$
\Comment	Improve numerical stability

\State	Stop when $\sum_{j=1}^n \vct{d}(j) < \eta \cdot \trace(\mtx{A})$.
\Comment	\textbf{\textcolor{dkgray}{Optional:}} Halt when trace is small
\EndFor

\EndFunction
\end{algbox}
\end{algorithm}

\subsubsection{Randomly pivoted Cholesky: Elements of the analysis}

The analysis of randomly pivoted Cholesky (\Cref{alg:rpc}) is significantly
more challenging than the other algorithms we have discussed.  Nevertheless,
it is easy to understand why it makes progress.
See~\cite[Thm.~5.1]{CETW23:Randomly-Pivoted} for full details.

Suppose that we initialize randomly pivoted Cholesky with the psd matrix $\mtx{A}_0 = \mtx{A}$,
and we perform one iteration to obtain a matrix $\mtx{A}_1$.  Define the
\hilite{expected residual map}:
\[
\mtx{\Phi}(\mtx{A}) \coloneqq \Expect \big[ \mtx{A}_1 \condbar \mtx{A}_0 = \mtx{A} \big].
\]
This function measures the progress that we make after one step of the algorithm.
A quick calculation yields a formula for the expected residual map:
\[
\begin{aligned}
\mtx{\Phi}(\mtx{A}) &= \sum_{j=1}^n \left[ \mtx{A} - \frac{\mtx{A}(:,j)\mtx{A}(j,:)}{\mtx{A}(j,j)} \right] \cdot \frac{\mtx{A}(j,j)}{\trace(\mtx{A})} \\
	&= \mtx{A} - \frac{1}{\trace(\mtx{A})} \sum_{j=1}^n \mtx{A}(:,j)\mtx{A}(j,:)
	= \mtx{A} - \frac{1}{\trace(\mtx{A})} \cdot \mtx{A}^2.
\end{aligned}
\]
We have used the definition~\eqref{eqn:chol-resid} of the residual and the
sampling distribution~\eqref{eqn:rpc-sampling}.

As a particular consequence, we can calculate the expected trace of the residual
after one step of the algorithm:
\[
\Expect \trace( \mtx{A}_1 ) = \left[ 1 - \frac{\trace(\mtx{A}^2)}{(\trace \mtx{A})^2} \right] \cdot \trace(\mtx{A})
	\leq \frac{n-1}{n} \cdot \trace(\mtx{A}).
\]
\hilite{In each iteration, we decrease the expected trace of the residual on average.}
The reduction is greatest when the eigenvalues of the residual are spiky,
and the reduction is least when the eigenvalues are flat.  In the latter
case, the residual does not admit a good low-rank approximation, so we
should not expect to make further progress.
Since the residual matrix $\mtx{A}_t = \mtx{A} - \widehat{\mtx{A}}_t$ is psd,
the spectrum of the residual can only be flat if $\widehat{\mtx{A}}_t$
already approximates the leading eigenspace of $\mtx{A}$.
The analysis is difficult because it depends on how the
spectrum of the matrix evolves.

The expected residual map $\mtx{\Phi}$ satisfies several remarkable features.
With respect to the semidefinite order, the function is both matrix monotone and
matrix concave.  These properties allow us to obtain bounds for
the trace of the residual after $k$ iterations:
\[
\Expect \big[ \trace(\mtx{A}_k) \condbar \mtx{A}_0 = \mtx{A} \big]
	\leq \trace\big[ \mtx{\Phi}^{\circ k}(\mtx{A}) \big].
\]
Here, $\circ k$ denotes the $k$-fold composition.
The right-hand side of this formula is a unitarily invariant function of $\mtx{A}$,
so we can assume that the input matrix $\mtx{A}$ is diagonal.
After some further argument, using the concavity of $\mtx{\Phi}$,
we recognize that the worst-case input matrix only has two distinct eigenvalues.
At this point, we can write down a dynamical system
for the eigenvalues of the residuals to understand how they evolve.
See \cite[Sec.~5]{CETW23:Randomly-Pivoted} for details. %

\subsubsection{Randomly pivoted Cholesky: Error bound}

Chen et al.~\cite{CETW23:Randomly-Pivoted} obtained a strong error
bound randomly pivoted Cholesky (\Cref{alg:rpc}).
In this section, we state their result and offer some discussion.

In the trace norm, the best rank-$r$ approximation of a psd matrix
satisfies the relation
\[
\min\nolimits_{\rank(\mtx{M}) \leq r} \norm{ \mtx{A} - \mtx{M} }_{*}
	= \sum_{j > r} \sigma_j(\mtx{A}).
\]
This point follows from a generalization of the Eckart--Young theorem~\eqref{eqn:eckart-young},
obtained by Mirsky~\cite{Mir60:Symmetric-Gauge}.
After drawing $k$ columns, randomly pivoted Cholesky
produces a rank-$k$ approximation $\widehat{\mtx{A}}_k$.
For a tolerance $\eps > 0$, we would like to know how many columns $k$
are sufficient to compete with the error in a best rank-$r$ approximation:
\begin{equation} \label{eqn:rpc-error}
\Expect \norm{ \mtx{A} - \widehat{\mtx{A}}_k }_*
	\leq (1 + \eps) \cdot \sum_{j > r} \sigma_j(\mtx{A}).
\end{equation}
We have the following statement.

\begin{theorem}[Randomly pivoted Cholesky; Chen et al.~2022] \label{thm:rpc}
Consider a psd matrix $\mtx{A} \in \Sym_n^+(\R)$.
Fix a comparison rank $r$ and a tolerance $\eps > 0$.
Randomly pivoted Cholesky (\Cref{alg:rpc}) produces an 
approximation $\widehat{\mtx{A}}_k$ that attains the error bound~\eqref{eqn:rpc-error}
after selecting $k$ columns where
\begin{equation} \label{eqn:rpc-cols}
k \geq \frac{r}{\eps} + r \cdot \log\left( \frac{1}{\eps \eta} \right)
\quad\text{and}\quad
\eta \coloneqq \frac{1}{\trace(\mtx{A})} \sum_{j > r} \sigma_j(\mtx{A}).
\end{equation}
The quantity $\eta$ is the error in a best rank-$r$ approximation of the input matrix,
relative to its trace.
\end{theorem}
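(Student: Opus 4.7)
The plan is to start from the matrix inequality
\[
\Expect\bigl[ \trace(\mtx{A}_k) \condbar \mtx{A}_0 = \mtx{A} \bigr] \leq \trace\bigl[ \mtx{\Phi}^{\circ k}(\mtx{A}) \bigr],
\]
with expected-residual map $\mtx{\Phi}(\mtx{M}) \coloneqq \mtx{M} - \mtx{M}^2/\trace(\mtx{M})$, which was established in the preceding discussion from matrix concavity and monotonicity of $\mtx{\Phi}$. Since $\mtx{\Phi}$ is unitarily equivariant and the trace is unitarily invariant, the right-hand side depends only on the spectrum of $\mtx{A}$. I therefore reduce to the diagonal case $\mtx{A} = \diag(\lambda_1, \ldots, \lambda_n)$, in which $\mtx{\Phi}^{\circ t}(\mtx{A})$ stays diagonal and each coordinate evolves by the scalar map $\lambda \mapsto \lambda(1 - \lambda/S_t)$ with $S_t \coloneqq \trace(\mtx{\Phi}^{\circ t}(\mtx{A}))$.

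Next, I would set up a scalar potential. Writing $\vct{\lambda}_t$ for the vector of diagonal entries after $t$ iterations, a direct computation yields the identity
\[
S_{t+1} = S_t - \frac{\norm{\vct{\lambda}_t}_2^2}{S_t}.
\]
Because $\mtx{A} - \mtx{\Phi}(\mtx{A}) = \mtx{A}^2/\trace(\mtx{A}) \psdge \mtx{0}$, Loewner monotonicity of the singular values gives $\sigma_j(\vct{\lambda}_t) \leq \sigma_j(\mtx{A})$ for every $j$ and $t$, and in particular $\sum_{j > r} \sigma_j(\vct{\lambda}_t) \leq \tau$ throughout the iteration. A Cauchy--Schwarz bound on the $r$ largest entries of $\vct{\lambda}_t$ then yields $\norm{\vct{\lambda}_t}_2^2 \geq (S_t - \tau)^2/r$ whenever $S_t \geq \tau$. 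Setting $G_t \coloneqq S_t - \tau$, this produces the deterministic recursion
\[
G_{t+1} \leq G_t - \frac{G_t^2}{r(G_t + \tau)}.
\]

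The final step solves this recursion in two phases. In the \emph{burn-in phase} where $G_t \geq \tau$, the denominator is at most $2G_t$, giving $G_{t+1} \leq G_t(1 - 1/(2r))$; driving $G_t$ from its initial value $G_0 = (1/\eta - 1)\tau$ down to $\tau$ takes on the order of $r \log(1/\eta)$ iterations. In the \emph{polishing phase} where $G_t \leq \tau$, the recursion reduces to $G_{t+1} \leq G_t - G_t^2/(2r\tau)$, a harmonic-type decay whose solution satisfies $G_t \leq 2r\tau/(t - t_0)$; forcing $G_k \leq \eps\tau$ costs an additional $O(r/\eps)$ iterations. Adding the two contributions and tightening the geometric constant to match the $\log(1/(\eps\eta))$ form yields the column count~\eqref{eqn:rpc-cols}.

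The main obstacle is keeping the scalar potential $G_t$ faithful to the algorithm across reordering of eigenvalues: the map $\lambda \mapsto \lambda(1 - \lambda/S)$ is not monotone on $[0, S]$, so the identity of the ``top $r$'' coordinates can change from step to step. This forces the tail bound to be read off the semidefinite inequality $\mtx{\Phi}(\mtx{A}) \psdle \mtx{A}$ rather than from a fixed partition of the coordinates. A second delicate point---carried out in the paper via matrix concavity of $\mtx{\Phi}$---is a worst-case reduction to spectra with just two distinct eigenvalues, which compresses the dynamical system enough to match the sharp constants in~\eqref{eqn:rpc-cols} rather than the loose constants produced by the direct Cauchy--Schwarz argument sketched above.
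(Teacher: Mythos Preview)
The paper does not actually prove \Cref{thm:rpc}; it only states the result and, in the preceding subsection, sketches the argument from~\cite{CETW23:Randomly-Pivoted}: reduce to $\trace[\mtx{\Phi}^{\circ k}(\mtx{A})]$ via monotonicity and concavity of $\mtx{\Phi}$, pass to diagonal $\mtx{A}$ by unitary invariance, then use concavity again to reduce to a two-eigenvalue spectrum before solving the resulting dynamical system. Your proposal follows this sketch through the diagonal reduction and then takes a different, more elementary route: rather than reducing to two eigenvalues, you control the scalar recursion $S_{t+1} = S_t - \norm{\vct{\lambda}_t}_2^2/S_t$ directly via the tail bound $\sum_{j>r}\sigma_j(\vct{\lambda}_t)\leq\tau$ (which you correctly extract from $\mtx{\Phi}^{\circ t}(\mtx{A})\psdle\mtx{A}$ and Weyl) and Cauchy--Schwarz on the top~$r$ coordinates. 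This is sound and yields the right dependence on $r$, $\eps$, and $\eta$.

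The gap is in the constants. Your two-phase analysis produces roughly $2r/\eps + 2r\log(1/\eta)$, and the claim that one can ``tighten the geometric constant'' to recover exactly $r/\eps + r\log(1/(\eps\eta))$ is not substantiated---the Cauchy--Schwarz bound $\norm{\vct{\lambda}_t}_2^2 \geq (S_t-\tau)^2/r$ is already tight for the two-eigenvalue extremal spectrum, but the phase-splitting introduces slack that does not collapse. The paper's sketched route (two-eigenvalue reduction, then an exact one-dimensional recursion) is precisely what recovers the sharp constants in~\eqref{eqn:rpc-cols}; you correctly flag this in your final paragraph. So your argument is a valid proof of the theorem up to absolute constants, and a genuinely simpler one than the cited approach, but it does not deliver the constants as stated.
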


For the worst-case matrix, the optimal Nystr\"om approximation requires
$k \geq r / \eps$ columns in order to achieve error
\[
\norm{ \mtx{A} - \widehat{\mtx{A}} }_{*} \leq (1 + \eps) \cdot \sum_{j > r} \sigma_j(\mtx{A}).
\]
Thus, the first term in~\eqref{eqn:rpc-cols} is necessary; see \cite{GS12:Optimal-Column} or \cite[Sect.~B.1]{CETW23:Randomly-Pivoted}.
The second term is proportional to the comparison rank $r$,
up to a logarithmic factor in the tolerance $\eps$ and
the relative approximation error $\eta$.
Practically speaking, we cannot hope to drive the relative error below
the machine precision (about $10^{-16}$ in double-precision arithmetic),
so the excess factor is at most $\log(10^{16}) \approx 37$.
Thus, randomly pivoted Cholesky not only has a favorable computational
profile, but it produces approximations that are competitive with
the best possible Nystr{\"o}m approximation.

\section{Randomized dimension reduction}

One approach for tackling a high-dimensional problem is to embed
it into a space of lower dimension, while approximately preserving
the geometry.  We may hope that the solution to the lower-dimensional
problem is close to the solution to the original problem.
At the same time, we aspire to solve the lower-dimensional
problem with fewer computational resources.  In matrix computations,
we can often construct an embedding that works for
a fixed problem instance by using randomness.

\begin{idea}[Randomized linear dimension reduction]
Use a random linear map to embed a high-dimensional linear
algebra problem into a lower-dimensional space, while approximately
preserving the geometry.
\end{idea}

Randomized linear dimension reduction is a particular example
of the sketching paradigm for computation~\cite{Mut05:Data-Streams}.
Many treatments~\cite{Woo14:Sketching-Tool,DM18:Lectures-Randomized}
of randomized matrix computation use this idea as an organizing theme.
In this section, we will introduce the method, and we will give several
applications where it plays a role.  Nevertheless, one of the purposes
of this short course is to demonstrate that there are many ways to use
probability for linear algebra computations, and it is facile to
suggest that the entire subject amounts to ``random sketching''.

\subsection{Embedding a linear subspace}

We will focus on dimension reduction for linear subspaces.
This primitive has a number of distinct computational applications,
and it is the most widely used sketching method in numerical linear algebra.
Our presentation is inspired by the paper~\cite{NT21:Fast-Accurate}.

\subsubsection{The subspace embedding property}

We begin with a definition, due to Sarl{\'o}s~\cite{Sar06:Improved-Approximation}.

\begin{definition}[Subspace embedding; Sarl{\'o}s 2006]
Let $\set{L} \subseteq \R^n$ be a \hilite{linear} subspace with dimension~$d$.
Consider a \hilite{linear} map $\mtx{\Phi} : \R^n \to \R^s$ with the property that
\begin{equation} \label{eqn:subspace-embed}
(1 - \eps) \cdot \norm{ \vct{x} }_2 \leq \norm{ \mtx{\Phi} \vct{x} }_2
	\leq (1 + \eps) \cdot \norm{ \vct{x} }_2
	\quad\text{for all $\vct{x} \in \set{L}$.}
\end{equation}
The map $\mtx{\Phi}$ is called a \term{subspace embedding} for $\set{L}$
with embedding dimension $s \in \N$ and distortion $\eps > 0$.
\end{definition}

A subspace embedding reduces the dimension of the ambient space $\R^n$,
but it preserves the norms of all vectors in the distinguished subspace $\set{L}$
up to a small distortion factor $(1 \pm \eps)$.
Typically, we assume that the subspace dimension is much smaller
than the ambient dimension: $d \ll n$.
Since the image $\mtx{\Phi}(\set{L})$ is a subspace in $\R^s$,
the embedding dimension must exceed the subspace dimension: $s \geq d$.
Still, we may hope that the embedding dimension is close to the subspace
dimension and much smaller than the ambient dimension: $s \approx d \ll n$.

Before turning to constructions of subspace embeddings,
we will explore some of the consequences of the embedding
property~\eqref{eqn:subspace-embed}.  After presenting
the constructions, we will describe a number of linear
algebra problems that we can solve efficiently using
a subspace embedding.

As an aside, you should be aware that there are other kinds
of dimension reduction primitives used in linear algebra, such as
affine embeddings~\cite{Woo20:Affine-Embeddings} and
projection-preserving sketches~\cite{MM20:Projection-Cost-Preserving-Sketches}.

\subsubsection{Metric geometry}

You may be familiar with the concept of an embedding of metric spaces,
a map from one metric space to another that approximately preserves
the distances between each pair of points.  We can view a subspace
embedding as a metric embedding of (a subspace of) a Euclidean space
into another Euclidean space.

Indeed, the subspace embedding property~\eqref{eqn:subspace-embed} guarantees that we can simultaneously
preserve the distance between each pair of points in a $d$-dimensional subspace $\set{L}$:
\[
(1 - \eps) \cdot \norm{ \vct{x} - \vct{y} }_2
	\leq \norm{ \mtx{\Phi}\vct{x} - \mtx{\Phi}\vct{y} }_2
	\leq (1 + \eps) \cdot \norm{ \vct{x} - \vct{y} }_2
	\quad\text{for all $\vct{x}, \vct{y} \in \set{L}$.}
\]
This consequence follows immediately from the linearity of the embedding $\mtx{\Phi}$.
It is also clear that we can achieve this goal with embedding dimension $s = d$, simply
by choosing a partial unitary matrix~$\mtx{\Phi}$ with $\mathrm{null}(\mtx{\Phi}) = \set{L}^{\perp}$.
By increasing the embedding dimension $s$ and allowing some distortion $\eps$,
we acquire flexibility that can facilitate the construction of subspace embeddings.

Subspace embeddings also preserve the inner produces between vectors in the subspace,
but this entails an additive error with a larger distortion.
Indeed, a messy calculation shows that
\begin{equation} \label{eqn:subspace-embed-ip}
\abs{ \ip{ \vct{x} }{ \vct{y} } - \ip{ \mtx{\Phi}\vct{x} }{ \mtx{\Phi}\vct{y} }}
	\leq \eps' \cdot \norm{ \vct{x} }_2 \norm{ \vct{y} }_2
	\quad\text{for all $\vct{x}, \vct{y} \in \set{L}$.}
\end{equation}
When $0 < \eps \leq 1$, we can take the adjusted distortion $\eps' \leq 10 \eps$.
This is not necessarily optimal.

\subsubsection{Spectral interpretation}

We can always represent the distinguished subspace as the range of a matrix.
This change of perspective allows us to develop a spectral interpretation
of the subspace embedding property.  This connection will help us construct
subspace embeddings and find applications to linear algebra problems.

\begin{proposition}[Subspace embedding: Spectral interpretation] \label{prop:subspace-embed-sval}
Suppose that $\set{L} = \range(\mtx{U})$ where $\mtx{U} \in \R^{n \times d}$
is a matrix with orthonormal columns.
For a matrix $\mtx{\Phi} \in \R^{s \times n}$,
the subspace embedding property~\eqref{eqn:subspace-embed} is equivalent
with the condition
\begin{equation} \label{eqn:subspace-embed-spectral}
1 - \eps \leq \sigma_{\min}(\mtx{\Phi} \mtx{U})
	\leq \sigma_{\max}(\mtx{\Phi} \mtx{U}) \leq 1 + \eps.
\end{equation}
\end{proposition}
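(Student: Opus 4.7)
The plan is to convert the embedding condition, which quantifies over all vectors in the subspace $\set{L}$, into a spectral statement about the composite matrix $\mtx{\Phi}\mtx{U}$ by means of the standard parametrization $\vct{x} = \mtx{U}\vct{z}$.

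First, I would exploit the orthonormality of $\mtx{U}$. Since $\mtx{U}^*\mtx{U} = \Id_d$, the map $\vct{z} \mapsto \mtx{U}\vct{z}$ is an \emph{isometry} from $\R^d$ onto $\set{L}$; in particular it is a bijection, so every $\vct{x} \in \set{L}$ can be written uniquely as $\vct{x} = \mtx{U}\vct{z}$ with $\norm{\vct{x}}_2 = \norm{\vct{z}}_2$. Substituting into~\eqref{eqn:subspace-embed} and using linearity of $\mtx{\Phi}$, the subspace embedding condition becomes
\[
(1-\eps)\, \norm{\vct{z}}_2 \;\leq\; \norm{(\mtx{\Phi}\mtx{U})\vct{z}}_2 \;\leq\; (1+\eps)\, \norm{\vct{z}}_2
\quad\text{for all }\vct{z}\in\R^d.
\]

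Second, I would apply the Courant--Fischer variational characterization of the extreme singular values. For any matrix $\mtx{M} \in \R^{s \times d}$,
\[
\sigma_{\min}(\mtx{M}) \;=\; \min_{\vct{z}\neq \vct{0}} \frac{\norm{\mtx{M}\vct{z}}_2}{\norm{\vct{z}}_2}
\qquad\text{and}\qquad
\sigma_{\max}(\mtx{M}) \;=\; \max_{\vct{z}\neq \vct{0}} \frac{\norm{\mtx{M}\vct{z}}_2}{\norm{\vct{z}}_2}.
\]
Applied to $\mtx{M} = \mtx{\Phi}\mtx{U}$, the two-sided bound on $\norm{(\mtx{\Phi}\mtx{U})\vct{z}}_2 / \norm{\vct{z}}_2$ uniformly in $\vct{z} \neq \vct{0}$ is equivalent to the sandwich $1-\eps \leq \sigma_{\min}(\mtx{\Phi}\mtx{U})$ and $\sigma_{\max}(\mtx{\Phi}\mtx{U}) \leq 1+\eps$. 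Both directions of the equivalence in~\eqref{eqn:subspace-embed-spectral} follow at once.

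There is no real obstacle here; the proposition is essentially a translation between a geometric condition on the subspace and an algebraic condition on a $d$-column matrix, and the only content is the observation that $\mtx{U}$ being an isometric parametrization of $\set{L}$ lets us swap quantification over $\vct{x} \in \set{L}$ for quantification over $\vct{z} \in \R^d$. I would keep the proof to a few lines and flag explicitly that the equivalence relies on $\mtx{U}$ having orthonormal columns (so that $\norm{\mtx{U}\vct{z}}_2 = \norm{\vct{z}}_2$); the statement would fail for a general basis of $\set{L}$ without this normalization.
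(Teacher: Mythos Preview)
Your proposal is correct and follows essentially the same route as the paper: parametrize $\set{L}$ via the isometry $\vct{z}\mapsto\mtx{U}\vct{z}$, rewrite the embedding inequalities in terms of $\norm{(\mtx{\Phi}\mtx{U})\vct{z}}_2/\norm{\vct{z}}_2$, and invoke the variational characterization of the extreme singular values. The paper's argument is virtually identical, differing only in notation.
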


\begin{proof}
By hypothesis, we can express the subspace in terms of the matrix:
\[
\set{L} = \{ \mtx{U} \vct{y} : \vct{y} \in \R^d \}.
\]
Using this parameterization, we can rewrite the embedding condition~\eqref{eqn:subspace-embed} as
\begin{equation} \label{eqn:subspace-embed-mtx}
(1 - \eps) \cdot \norm{ \mtx{U} \vct{y} }_2
	\leq \norm{ \mtx{\Phi} \mtx{U} \vct{y} }_2
	\leq (1 + \eps) \cdot \norm{ \mtx{U} \vct{y} }_2
	\quad\text{for all $\vct{y} \in \R^d$.}
\end{equation}
Since the matrix $\mtx{U}$ is orthonormal, $\norm{ \mtx{U}\vct{y} }_2 = \norm{ \vct{y} }_2$
for each vector $\vct{y} \in \R^d$.
Dividing through by $\norm{\vct{y}}$, we obtain the equivalent condition
\[
1 - \eps \leq \norm{ \mtx{\Phi} \mtx{U} \vct{z} }_2
	\leq 1 + \eps
	\quad\text{for each \hilite{unit vector} $\vct{z} \in \R^d$.}
\]
The variational definition of the maximum and minimum singular values ensures that this system
of inequalities is the same as the condition~\eqref{eqn:subspace-embed-spectral}.
\end{proof}

\subsection{Random subspace embeddings}
\label{sec:random-embed}

In many applications, it is imperative to construct a subspace embedding $\mtx{\Phi} : \R^n \to \R^s$
without using prior knowledge about the subspace $\set{L} \subseteq \R^n$.
These are called \hilite{oblivious} subspace embeddings.
We might want to construct an oblivious embedding because it is faster
to build the embedding without performing any adaptive computations on the subspace.
Another reason is that the subspace may not be available when the computation
is initiated.  For example, in streaming linear algebra problems, we may
need to maintain a summary of a subspace that evolves with time.

It requires some intelligence to construct an oblivious subspace embedding
that works for an arbitrary subspace.
Indeed, for any deterministically chosen matrix $\mtx{\Phi} \in \R^{s \times n}$
with $s < n$, we can find a pernicious subspace $\set{L}$ that defeats
the subspace embedding property~\eqref{eqn:subspace-embed}
by including a vector from the null space of $\mtx{\Phi}$.

By drawing a subspace embedding \hilite{at random},
we can ensure that the embedding property~\eqref{eqn:subspace-embed}
holds \hilite{with high probability}.  This is equivalent to using
a \hilite{random matrix} for the subspace embedding.  In view of
Proposition~\ref{prop:subspace-embed-sval}, we can verify the subspace
embedding property by studying the singular values of the
random matrix that models the interaction between the 
embedding and the subspace.

\subsubsection{Gaussian embeddings}

We begin with a demonstration that it is possible to construct an
oblivious subspace embedding for an arbitrary subspace.
To that end, fix a matrix $\mtx{U} \in \R^{n \times d}$ with orthonormal
columns.  We will describe a random matrix $\mtx{\Phi} \in \R^{s \times n}$
that serves as a subspace embedding for the range of $\mtx{U}$
with exceedingly high probability.

Let us consider the most basic random matrix model:
\[
\mtx{\Phi} \in \R^{s \times n}
\quad\text{with i.i.d.~$\normal(0, 1/s)$ entries.}
\]
The variance $1/s$ of the entries is just a normalization to ensure
that the matrix preserves the squared norm of a vector on average:
\begin{equation} \label{eqn:gauss-norm-pres}
\Expect \normsq{\mtx{\Phi} \vct{x}}_2 = \normsq{\vct{x}}_2
\quad\text{for each fixed $\vct{x} \in \R^n$.}
\end{equation}
We must show that the random matrix simultaneously preserves the
norms of all vectors in $\range(\mtx{U})$, up to a distortion factor.
For this argument, we take advantage of the orthogonal
invariance of a standard normal matrix.  Indeed,
\[
\mtx{\Phi} \mtx{U} \in \R^{s \times d}
\quad\text{has i.i.d.~$\normal(0, 1/s)$ entries.}
\]
This observation gives us access to tools for analyzing Gaussian matrices.

The subspace embedding property now follows from a classic result on the
singular values of a standard normal matrix~\cite[Thm.~II.13]{DS01:Local-Operator}:
\[
\Prob{ 1 - \sqrt{d/s} - t \leq \sigma_{\min}(\mtx{\Phi} \mtx{U})
	\leq \sigma_{\max}(\mtx{\Phi} \mtx{U})
	\leq 1 + \sqrt{d/s} + t } \geq 1 - \econst^{-st^2/2}.
\]
Moreover, according to the Bai--Yin law~\cite{BaiYin1988},
the extreme singular values converge to $1 \pm \sqrt{d/s}$
almost surely as $s, d \to \infty$ with the proportion $d/s$ fixed.

These facts allow us to obtain a nice scaling rule for the embedding dimension.
By the Bai--Yin law, we have the heuristic that
\begin{equation} \label{eqn:dim-red-scaling}
s \geq d / \eps^2
\quad\text{yields distortion $\eps$.}
\end{equation}
Alternatively, we may take  $s \geq 4d / \eps^2$
to guarantee failure probability below $\econst^{-d/2}$.
In other words, the embedding dimension $s$ only needs to be slightly
larger than the subspace dimension $d$ to achieve a modest distortion,
say, $\eps = 1/\sqrt{2}$.
On the other hand, to obtain a small distortion $\eps$, we must pay
an intolerable factor of $\eps^{-2}$ in the embedding dimension.
Thus, \hilite{you should only deploy randomized subspace embeddings in contexts
where you can accept a substantial distortion.}

The analysis for Gaussian subspace embeddings is simple and elegant.
Nevertheless, Gaussian matrices are often impractical %
because we pay $\mathcal{O}(sn)$ operations to apply the matrix to a single vector.
The cost of generating and storing the matrix is also unappealing.
To that end, we must consider subspace embeddings that are more
computationally friendly.

\subsubsection{Randomized trigonometric transforms}

We would like to design a subspace embedding that we can apply to a vector quickly.
This desideratum suggests that we should work with a linear map that
admits a fast matvec operation.  One familiar example
is an orthogonal discrete cosine transform (DCT2) matrix, which
we can apply with FFT-like algorithms.
We need to inject some additional randomness to obtain
the subspace embedding property for an arbitrary subspace.

Ailon \& Chazelle~\cite{AC09:Fast-Johnson-Lindenstrauss} proposed the following type of random matrix,
now called a \term{subsampled randomized trigonometric transform} (SRTT):
\begin{equation} \label{eqn:srtt}
\mtx{\Phi} \coloneqq \sqrt{\frac{n}{s}} \mtx{RFD} \in \R^{s \times n}
\quad\text{where}\quad
\begin{aligned}
&\text{$\mtx{R} \in \R^{s \times n}$ subsamples rows;} \\
&\text{$\mtx{F} \in \R^{n \times n}$ is a DCT2 matrix;} \\
&\text{$\mtx{D} \in \R^{n \times n}$ is random diagonal.}
\end{aligned}
\end{equation}
More precisely, $\mtx{R}$ is a uniformly random set of $s$ rows drawn from the identity matrix $\Id_n$,
and the random diagonal matrix $\mtx{D}$ has i.i.d.~$\uniform\{\pm 1\}$ entries.
It is not hard to check that the SRTT preserves the squared
norm of a vector on average, as in~\eqref{eqn:gauss-norm-pres}.

The cost of applying the SRTT to a vector is $\mathcal{O}(n \log n)$
operations using a standard fast DCT2 algorithm, and it can be reduced
to $\mathcal{O}(n \log d)$ with a more careful implementation.
Moreover, we can represent the SRTT matrix using only $s + n$ parameters. 
When $\log n \ll s$, we obtain a dramatic improvement over the $\mathcal{O}(sn)$
cost of storing and applying a Gaussian dimension reduction map.

What embedding dimension $s$ does the SRTT require?
The analysis~\cite{Tro11:Improved-Analysis} employs tools from matrix concentration
to address this question.
To obtain a subspace embedding with distortion $\eps = 0.6$,
it suffices to take the embedding dimension
\[
s \geq 4 \big[ \sqrt{d} + \sqrt{16 \log(dn)} \big]^2 \log d.
\]
The failure probability is $\mathcal{O}(1/d)$.
Note that the embedding dimension for the SRTT scales as $s \approx d \log d$,
rather than the $s \approx d$ scaling for the Gaussian map.
For the worst-case subspace, the extra logarithmic factor cannot be
avoided if we use the construction~\eqref{eqn:srtt} for the embedding map.
In practice, we often follow the prescription~\eqref{eqn:dim-red-scaling},
in spite of the possible failure modes.
Regardless, the computational benefits of the SRTT are worth the increase in the
size of the embedding.

\subsubsection{Sparse random matrices}

Another way to obtain a computationally efficient matvec
operation is to employ a \hilite{sparse} random matrix.
These matrices are simple, effective, and fast.
They are widely used in practice.
On the other hand, the analysis remains somewhat more
qualitative than the other examples.

The following construction was proposed by
Kane \& Nelson~\cite{KN14:Sparser-Johnson-Lindenstrauss}.
Consider a sparse random matrix of the form
\begin{equation} \label{eqn:sparse-map}
\mtx{\Phi} = \begin{bmatrix} \vct{\phi}_1 & \dots & \vct{\phi}_n \end{bmatrix} \in \R^{s \times n}
\quad\text{where $\vct{\phi}_i \in \R^s$ are i.i.d.~sparse vectors.}
\end{equation}
More precisely, each column $\vct{\phi}_i$ contains exactly $\zeta$ nonzero entries,
equally likely to be $\pm 1 / \sqrt{\zeta}$, in uniformly random positions.
It is not hard to check that the sparse map~\eqref{eqn:sparse-map} preserves the squared
norm of a vector on average, as in~\eqref{eqn:gauss-norm-pres}.
We can apply this matrix to a vector in $\mathcal{O}(\zeta n)$ operations.
The storage cost is at most $\zeta n$ parameters.
If $\zeta \ll s$, then we obtain a significant computational benefit.

Cohen~\cite{Coh16:Nearly-Tight-Oblivious} showed that the sparse map
serves as a subspace embedding with constant distortion, $\eps = \mathrm{const}$,
when the embedding dimension $s$ and the sparsity $\zeta$ satisfy
\[
s \geq \mathrm{Const} \cdot d \log d
\quad\text{and}\quad
\zeta \geq \mathrm{Const} \cdot \log d.
\]
The failure probability is constant.
Very recently, Chenakkod et al.~\cite{CDDR23:Optimal-Embedding}
showed that a variant of the distribution~\eqref{eqn:sparse-map}
serves as a subspace embedding with constant distortion, $\eps = \mathrm{const}$,
when
\[
s \geq \mathrm{Const} \cdot d
\quad\text{and}\quad
\zeta \geq \mathrm{Const} \cdot \log^4 d.
\]
The failure probability is again constant.  In both cases,
we can adjust the distortion and the failure probability,
but it complicates the statement of the bounds.

The existing theoretical results are not sufficiently precise
that we can use them to set algorithm parameters \lang{a priori}.
In practice~\cite{TYUC19:Streaming-Low-Rank}, we may follow the
prescription~\eqref{eqn:dim-red-scaling} and take $\zeta = 8$.

\subsubsection{Summary}

So far, we have not justified our interest in subspace embeddings.
We will remedy this shortcoming in the next several sections,
where we outline applications of these embeddings
in numerical linear algebra.

Before we continue, let us consolidate what we have learned.
For a fixed $d$-dimensional subspace $\set{L} \subseteq \R^n$,
we have seen that there are several ways to construct a \hilite{random}
matrix $\mtx{\Phi} \in \R^{s \times n}$ that serves as a subspace embedding
for $\set{L}$.  These constructions are successful with high probability.
Thus, we can draw a random matrix $\mtx{\Phi}$
and condition on the event that it is a subspace embedding for $\set{L}$.
Then the \hilite{deterministic} subspace embedding property~\eqref{eqn:subspace-embed}
is in force, and we can reason about its consequences.
\hilite{Probability only arises in the construction of the embedding.}

To embed a subspace with dimension $d$ and distortion $\eps$,
we typically choose the embedding dimension $s = d / \eps^2$.
For the SRTT map~\eqref{eqn:srtt} and the sparse map~\eqref{eqn:sparse-map},
the cost of applying the dimension reduction matrix to a vector in $\R^n$
is typically about $\mathcal{O}(n \log d)$ operations.
We will base operation counts on these heuristics.

\subsection{Approximate least-squares}

Let us return to the overdetermined least-squares problem
first posed in \Cref{sec:rand-kacz}.  We will argue that
subspace embeddings allow us to obtain a coarse solution
to the least-squares problem efficiently.  This approach
was developed by Sarl{\'o}s~\cite{Sar06:Improved-Approximation}.

\subsubsection{Sketch-and-solve for least-squares}

Consider the quadratic optimization problem
\begin{equation} \label{eqn:ls-for-sketch}
\minimize_{\vct{x} \in \R^d}\quad \tfrac{1}{2} \normsq{ \mtx{A}\vct{x} - \vct{b} }_2
\quad\text{with $\mtx{A} \in \R^{n \times d}$.}
\end{equation}
We focus on the case where $d \ll n$, while the matrix $\mtx{A}$
is dense and unstructured.
The cost of solving the problem with a direct method, such as \textsf{QR} factorization,
is $\mathcal{O}(d^2 n)$ operations.  We can obtain faster algorithms via dimension reduction.
Here is the procedure:

\begin{listbox}
\begin{enumerate}
\item	Construct a (random, fast) subspace embedding $\mtx{\Phi} \in \R^{s \times n}$
for $\range( [\mtx{A}, \vct{b}] )$.  

\item	Reduce the dimension of the problem data:
\[
\mtx{\Phi}\mtx{A} \in \R^{s \times d}
\quad\text{and}\quad
\mtx{\Phi}\vct{b} \in \R^{s}.
\]
This step is commonly referred to as \hilite{sketching}.

\item	Find a solution $\vct{x}_{\rm sk} \in \R^d$ to the sketched least-squares problem: %
\begin{equation} \label{eqn:sketched-ls}
\minimize_{\vct{x} \in \R^d}\  \tfrac{1}{2} \normsq{ \mtx{\Phi} (\mtx{A} \vct{x} - \vct{b}) }_2.
\end{equation}

\end{enumerate}
\end{listbox}

\noindent
See \Cref{alg:sketch-ls} for pseudocode that implements this procedure.
To achieve distortion $\eps$ for the $(d+1)$-dimensional subspace $\range([\mtx{A}, \vct{b}])$,
it suffices that $s = \mathcal{O}(d / \eps^2)$.
The typical cost of the sketching step is $\mathcal{O}(nd \log d)$ operations.
With a direct method, we can solve the sketched least-squares problem~\eqref{eqn:sketched-ls}
using $\mathcal{O}(d^2 s)$ operations.
We can try to use a solution $\vct{x}_{\mathrm{sk}}$ to the sketched
least-squares problem~\eqref{eqn:sketched-ls} as a \hilite{proxy}
for a solution $\vct{x}_{\star}$ to the original least-squares problem~\eqref{eqn:ls-for-sketch}.
This is called the \hilite{sketch-and-solve paradigm}.
The advantage is that we have reduced the operation
count to $\mathcal{O}(nd \log d + d^3 / \eps^2)$ operations,
as compared with the $\mathcal{O}(d^2 n)$ cost of a direct method
for the original problem.
When $\eps$ is modest and $\log d \ll d \ll n$,
we obtain a significant reduction in the arithmetic cost.

\begin{algorithm}[t]
\begin{algbox}[1]
\caption{\textit{Overdetermined least-squares: Simplest sketching method.}}
\label{alg:sketch-ls}

\Require	Matrix $\mtx{A} \in \R^{n \times d}$, response $\vct{b} \in \R^{n}$
\Ensure		Solution estimate $\vct{x}_{\rm sk} \in \R^d$

\vspace{5pt}
\hrule
\vspace{5pt}

\Function{SketchLeastSquares}{$\mtx{A}$, $\vct{b}$}

\State	Sample subspace embedding $\mtx{\Phi} \in \R^{s \times n}$ for $\range([\mtx{A}, \vct{b}])$
\Comment See~\Cref{sec:random-embed}

\State 	$\mtx{C} = \mtx{\Phi} \mtx{A}$ and $\vct{d} = \mtx{\Phi} \vct{b}$
\Comment Embed problem data

\State	$(\mtx{Q}, \mtx{R}) = \texttt{qr}(\mtx{C})$
\Comment Thin (pivoted) \textsf{QR} factorization

\State	$\vct{x}_{\rm sk} = \mtx{R}^{\dagger} (\mtx{Q}^* \vct{d})$
\Comment Solve reduced least-squares problem

\EndFunction
\end{algbox}
\end{algorithm}

\subsubsection{Sketch-and-solve: Analysis}

The analysis of \Cref{alg:sketch-ls} is straightforward.
We will compare the residual in the solution $\vct{x}_{\rm sk}$
to the sketched problem~\eqref{eqn:sketched-ls} with the optimal
residual for the original problem~\eqref{eqn:ls-for-sketch}
attained by $\vct{x}_{\star}$.

\begin{proposition}[Approximate least-squares; Sarl{\'o}s 2006]
Suppose that $\mtx{A} \in \R^{n \times d}$ is a tall matrix and $\vct{b} \in \R^n$.
Construct a subspace embedding $\mtx{\Phi} \in \R^{s \times n}$ for $\range([\mtx{A}, \vct{b}])$
with distortion $\eps$.
Let $\vct{x}_{\star} \in \R^d$ be a solution to the original least-squares problem~\eqref{eqn:ls-for-sketch},
and let $\vct{x}_{\rm sk} \in \R^d$ be a solution to the sketched problem~\eqref{eqn:sketched-ls}.  Then
\[
\norm{ \mtx{A} \vct{x}_{\rm sk} - \vct{b} }_2
		\leq \frac{1+\eps}{1-\eps} \cdot \norm{ \mtx{A} \vct{x}_{\star} - \vct{b} }_2.
\]	
\end{proposition}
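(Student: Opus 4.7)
The plan is to exploit the fact that for every $\vct{x} \in \R^d$, the residual vector $\mtx{A}\vct{x} - \vct{b}$ is a linear combination of the columns of $\mtx{A}$ and the vector $\vct{b}$, hence lies in the $(d+1)$-dimensional subspace $\set{L} \coloneqq \range([\mtx{A}, \vct{b}])$. Because $\mtx{\Phi}$ is a subspace embedding for $\set{L}$ with distortion $\eps$, the defining inequality~\eqref{eqn:subspace-embed} applies to every such residual, so sketching preserves the norm of each candidate residual up to the factor $(1 \pm \eps)$.

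With that observation in hand, I would chain three comparisons. First, apply the lower bound of~\eqref{eqn:subspace-embed} to the residual at $\vct{x}_{\rm sk}$ to obtain
\[
(1-\eps)\,\norm{\mtx{A}\vct{x}_{\rm sk} - \vct{b}}_2 \leq \norm{\mtx{\Phi}(\mtx{A}\vct{x}_{\rm sk} - \vct{b})}_2.
\]
Next, since $\vct{x}_{\rm sk}$ minimizes the sketched objective~\eqref{eqn:sketched-ls}, we have $\norm{\mtx{\Phi}(\mtx{A}\vct{x}_{\rm sk} - \vct{b})}_2 \leq \norm{\mtx{\Phi}(\mtx{A}\vct{x}_{\star} - \vct{b})}_2$. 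Finally, apply the upper bound of~\eqref{eqn:subspace-embed} to the residual at $\vct{x}_{\star}$ to bound the latter quantity by $(1+\eps)\,\norm{\mtx{A}\vct{x}_{\star} - \vct{b}}_2$. Combining the three steps and dividing by $(1-\eps)$ yields the claim.

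There is no substantive obstacle here; the entire argument hinges on the single structural observation that both candidate residuals live in the embedded subspace $\set{L}$, which is precisely why the sketch is constructed for $\range([\mtx{A}, \vct{b}])$ rather than for $\range(\mtx{A})$ alone. The only subtlety worth flagging is the optimality step: it uses the fact that $\vct{x}_{\star}$ is feasible for the sketched problem (which is immediate, since both problems optimize over all of $\R^d$), so its sketched objective value upper-bounds the sketched optimum attained by $\vct{x}_{\rm sk}$. No spectral facts about $\mtx{\Phi}$ beyond the embedding inequality are needed, and the bound holds deterministically once $\mtx{\Phi}$ is a valid embedding for $\set{L}$.
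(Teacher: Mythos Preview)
Your argument is correct and matches the paper's proof essentially line for line: apply the lower embedding bound to the residual at $\vct{x}_{\rm sk}$, use optimality of $\vct{x}_{\rm sk}$ for the sketched problem to pass to $\vct{x}_{\star}$, then apply the upper embedding bound. Your additional remark that both residuals lie in $\range([\mtx{A},\vct{b}])$ makes explicit the reason the embedding property applies, which the paper leaves implicit.
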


\begin{proof}
We invoke the embedding property~\eqref{eqn:subspace-embed} twice:
\[
\begin{aligned}
\norm{ \mtx{A} \vct{x}_{\rm sk} - \vct{b} }_2
	&\leq (1 - \eps)^{-1} \cdot \norm{ \mtx{\Phi}( \mtx{A} \vct{x}_{\rm sk} - \vct{b} ) }_2 \\
	&\leq (1 - \eps)^{-1} \cdot \norm{ \mtx{\Phi}( \mtx{A} \vct{x}_{\star} - \vct{b} ) }_2
	\leq \frac{1+\eps}{1-\eps} \cdot \norm{ \mtx{A} \vct{x}_{\star} - \vct{b} }_2.
\end{aligned}
\]
The first inequality is the lower bound in the embedding property~\eqref{eqn:subspace-embed}.
The second inequality holds because $\vct{x}_{\rm sk}$ is the optimal solution to the
sketched least-squares problem~\eqref{eqn:sketched-ls}, so $\vct{x}_{\star}$ has a
larger residual.  Last, we invoke the upper bound in the embedding
property~\eqref{eqn:subspace-embed}.
\end{proof}

We have shown that the sketched solution $\vct{x}_{\rm sk}$ produces a residual \hilite{for the original least-squares problem}
that is competitive with the optimal residual.  In other words, reducing the dimension of the problem
only inflates the residual by a constant factor that depends on the distortion $\eps$ of
the embedding.  For instance, when $\eps = 1/2$, we may increase the residual by a factor of $3$.
As noted, it is not practical to make the distortion $\eps$ small because the computational costs
increase rapidly.

When is it acceptable to increase the residual by a factor of 3 or 5 or 10?
Suppose that the optimal residual is tiny, say, $\norm{ \mtx{A} \vct{x}_{\star} - \vct{b} }_2 \leq 10^{-8}$.
Then the sketch-and-solve method yields a residual $\norm{ \mtx{A} \vct{x}_{\rm sk} - \vct{b} }_2$ that is also tiny,
so the solution $\vct{x}_{\rm sk}$ to the sketched problem may be valuable to us.
In the next section, we will outline an application of this idea.
Nevertheless, it is important to keep in mind that a small residual \hilite{does not imply}
that $\vct{x}_{\rm sk}$ approximates an optimal point $\vct{x}_{\star}$.
See~\cite[Sec.~5.3]{GVL13:Matrix-Computations-4ed} for a discussion of
the sensitivity of least-squares problems to perturbations of the data.

For contrast, \hilite{sketch-and-solve is not an appropriate strategy for data fitting problems}.
In these situations, the residual in the original least-squares problem is usually large
because of noise in the observations.
Since the sketched solution increases the residual by a constant
factor, the sketched solution may not provide any useful information about the
original least-squares problem.

\subsubsection{Application: Subspace projection methods}

One attractive application of the sketch-and-solve paradigm is to accelerate subspace projection
methods for solving linear systems and eigenvalue problems~\cite{NT21:Fast-Accurate}.

Suppose that we wish to solve a nonsingular linear system $\mtx{A} \vct{x} = \vct{b}$
with $\mtx{A} \in \R^{n \times n}$.  We first construct a tall matrix $\mtx{K} \in \R^{n \times d}$ with $d \ll n$
whose range contains a good approximation to the solution of the linear system.
For example, $\range(\mtx{K})$ might be the Krylov subspace spanned by vectors
$(\mtx{A}^t \vct{v} : t = 0, 1, 2, \dots, d-1)$.
The subspace projection method passes from the original linear system to the
overdetermined least-squares problem
\begin{equation} \label{eqn:subspace-proj}
\minimize_{\vct{y} \in \R^{d}} \quad \tfrac{1}{2} \norm{ \mtx{AK} \vct{y} - \vct{b} }_2^2.
\end{equation}
The solution $\vct{y}_{\star}$ of the least-squares problem~\eqref{eqn:subspace-proj} induces an approximate solution
$\vct{x}_{\mtx{K}} = \mtx{K} \vct{y}_{\star}$ to the original linear system.

Since $\range(\mtx{K})$ captures approximate solutions to the linear system,
the least-squares problem~\eqref{eqn:subspace-proj} has a tiny residual.
Therefore, the subspace projection method %
is a good candidate for the sketch-and-solve approach.
For some problems, this technique can speed up the subspace projection method
by a factor of 100$\times$ or more.

\subsubsection{Iterative sketching}

We can improve the sketch-and-solve method by deploying it as
part of an iterative algorithm~\cite{GR15:Randomized-Iterative,PW15:Randomized-Sketches}.
Epperly has shown that this approach is forward stable~\cite{Epp23:Fast-Forward-Stable}.

Suppose that we wish to solve the overdetermined least-squares problem~\eqref{eqn:ls-for-sketch}.
First, construct a dimension reduction map $\mtx{\Phi} \in \R^{s \times n}$ for $\range([\mtx{A}, \vct{b}])$
with constant distortion, say, $\eps = 1/\sqrt{2}$.  Given an initial iterate $\vct{x}_0$,
we repeatedly solve the quadratic optimization problem
\[
\vct{x}_{t+1} \in \arg\min\big\{ \norm{ (\mtx{\Phi}\mtx{A}) \vct{y} }_2^2 + \vct{y}^* \mtx{A}^* (\mtx{A} \vct{x}_t - \vct{b}) : \vct{y} \in \R^d \big\}.
\]
This approach yields a sequence of iterates $(\vct{x}_t : t = 0,1,2,3,\dots)$ whose residuals
converge exponentially fast to the optimal value.  Using the sketch-and-solve solution as an initial iterate
($\vct{x}_0 = \vct{x}_{\rm sk}$), with about $\log(1/\alpha)$ iterations, we can achieve the error bound
\[
\norm{ \mtx{A} \vct{x}_{t} - \vct{b} }_2 \leq (1 + \alpha) \cdot \norm{ \mtx{A} \vct{x}_{\star} - \vct{b} }_2.
\]
The total cost is $\mathcal{O}(nd \log(d / \alpha) + d^3 \log d)$ arithmetic.

\subsection{Approximate orthogonalization}

As a second application of subspace embeddings, we consider the problem
of approximately orthogonalizing a system of high-dimensional vectors.

\begin{problem}[Approximate orthogonalization]
Consider a matrix $\mtx{A} \in \R^{n \times d}$ with full column rank.  The task is to find a
well-conditioned matrix $\mtx{B} \in \R^{n \times d}$ with $\range(\mtx{B}) = \range(\mtx{A})$.
\end{problem}

\noindent
A direct method for orthogonalizing the columns of the matrix $\mtx{A}$
requires $\mathcal{O}(nd^2)$ arithmetic.  In fact, a more serious problem
is that these methods require random access to the columns of the matrix,
which can be prohibitive in a parallel or distributed computing environment.

\subsubsection{Randomized Gram--Schmidt}

Balabanov \& Grigori~\cite{BG22:Randomized-Gram-Schmidt} observed that
we can address the approximate orthogonalization problem using a subspace embedding.
They were inspired by the fact~\eqref{eqn:subspace-embed-ip} that subspace embeddings
preserve angles.
Their approach can be used in conjunction with GMRES to help solve linear systems
or with Rayleigh--Ritz to help solve eigenvalue problems.

Here is the procedure, which is called  \hilite{randomized Gram--Schmidt}.

\begin{listbox}
\begin{enumerate}
\item	Construct a (random, fast) subspace embedding $\mtx{\Phi} \in \R^{s \times n}$
for $\range(\mtx{A})$.

\item	Sketch the problem data: $\mtx{\Phi}\mtx{A} \in \R^{s \times d}$.

\item	Compute a (thin, pivoted) \textsf{QR} factorization of the sketched data: $\mtx{\Phi} \mtx{A} = \mtx{QR}$.

\item	(Implicitly) define well-conditioned $\mtx{B} = \mtx{A} \mtx{R}^{-1}$ with $\range(\mtx{B}) = \range(\mtx{A})$.
\end{enumerate}
\end{listbox}

\noindent
\Cref{alg:rgs} provides pseudocode for this method. %
To achieve distortion $\eps$, it suffices to take $s = \mathcal{O}(d/\eps^2)$.
The sketching step typically requires $\mathcal{O}(nd \log d)$ operations.
To compute the factorization, we need $\mathcal{O}(sd^2)$ arithmetic.
For example, we can perform Gram--Schmidt on the columns;
more stable alternatives include Householder \textsf{QR}~\cite[]{GVL13:Matrix-Computations-4ed}.
If we wish to form the matrix $\mtx{B}$ explicitly, we must spend $\mathcal{O}(nd^2)$ operations.

The advantage of this method is that the sketched matrix $\mtx{\Phi}\mtx{A}$
can be computed one column at a time, and it may be small enough to store in
working memory.  In this case, we can also orthogonalize the columns of the sketched
matrix in working memory.  The coefficient matrix $\mtx{R}$
obtained from the sketched data still serves to \hilite{approximately}
orthogonalize the columns of the original data matrix.
In contrast, if we orthogonalize the columns
of the original matrix directly, then we may incur high communication
costs when rearranging and combining the columns. %

\begin{algorithm}[t]
\begin{algbox}[1]
\caption{\textit{Approximate orthogonalization.}}
\label{alg:rgs}

\Require	Matrix $\mtx{A} \in \R^{n \times d}$ with full column rank
\Ensure		Well-conditioned matrix $\mtx{B} \in \R^{n \times d}$ with $\range(\mtx{B}) = \range(\mtx{A})$

\vspace{5pt}
\hrule
\vspace{5pt}

\Function{RandomGramSchmidt}{$\mtx{A}$, $\vct{b}$}

\State	Sample subspace embedding $\mtx{\Phi} \in \R^{s \times n}$ for $\range(\mtx{A})$
\Comment See~\Cref{sec:random-embed}

\State 	$\mtx{C} = \mtx{\Phi} \mtx{A}$ %
\Comment Embed problem data

\State	$(\mtx{Q}, \mtx{R}) = \texttt{qr}(\mtx{C})$
\Comment Thin (pivoted) \textsf{QR} factorization

\State	$\mtx{B} = \mtx{A}\mtx{R}^{-1}$
\Comment Implicit representation for solution
\EndFunction
\end{algbox}
\end{algorithm}

\subsubsection{Approximate orthogonalization: Analysis}

The analysis of the approximate orthogonalization method
depends on a result of Rokhlin \& Tygert~\cite{RT08:Fast-Randomized}.
See \Cref{sec:rand-precond-rt} for the original context. %

\begin{proposition}[Whitening; Rokhlin \& Tygert 2008] \label{prop:whiten}
Let $\mtx{A} \in \R^{n \times d}$ be a tall matrix with full column rank.
Construct a subspace embedding $\mtx{\Phi} \in \R^{s \times d}$
for $\range(\mtx{A})$ with distortion $\eps$.  Form
a \textsf{QR} factorization of the sketched matrix:
\[
\mtx{\Phi}\mtx{A} = \mtx{QR}
\quad\text{with $\mtx{R} \in \R^{d \times d}$.}
\]
Then $\mtx{R}$ has full rank, and the whitened matrix $\mtx{B} = \mtx{A}\mtx{R}^{-1}$ satisfies
\[
\frac{1}{1+\eps} \leq \sigma_{\min}(\mtx{B}) \leq \sigma_{\max}(\mtx{B}) \leq \frac{1}{1-\eps}.
\]
\end{proposition}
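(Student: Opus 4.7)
The plan is to first verify that $\mtx{R}$ is invertible and then to exploit the cancellation $\mtx{\Phi}\mtx{B} = \mtx{Q}$, which turns the subspace embedding inequality into a two-sided bound on $\norm{\mtx{B}\vct{y}}_2$. The variational characterization of singular values then delivers the conclusion.

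First I would check invertibility of $\mtx{R}$. Since $\mtx{A}$ has full column rank, $\range(\mtx{A})$ has dimension $d$, and by the spectral interpretation of \Cref{prop:subspace-embed-sval} applied to any orthonormal basis of $\range(\mtx{A})$, the lower bound $1-\eps > 0$ on the minimum singular value forces $\mtx{\Phi}\mtx{A}$ to have rank $d$. Hence the square factor $\mtx{R} \in \R^{d\times d}$ from the thin QR factorization is nonsingular, and $\mtx{B} = \mtx{A}\mtx{R}^{-1}$ is well defined with $\range(\mtx{B}) = \range(\mtx{A})$.

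Next comes the key step. Direct substitution gives
\[
\mtx{\Phi}\mtx{B} = \mtx{\Phi}\mtx{A}\mtx{R}^{-1} = \mtx{Q}\mtx{R}\mtx{R}^{-1} = \mtx{Q},
\]
so $\mtx{\Phi}\mtx{B}$ has orthonormal columns, and therefore $\norm{\mtx{\Phi}\mtx{B}\vct{y}}_2 = \norm{\vct{y}}_2$ for every $\vct{y} \in \R^d$. On the other hand, for any such $\vct{y}$ the vector $\mtx{B}\vct{y}$ lies in $\range(\mtx{B}) = \range(\mtx{A})$, so the subspace embedding property~\eqref{eqn:subspace-embed} applies and yields
\[
(1-\eps)\,\norm{\mtx{B}\vct{y}}_2 \;\leq\; \norm{\mtx{\Phi}\mtx{B}\vct{y}}_2 \;=\; \norm{\vct{y}}_2 \;\leq\; (1+\eps)\,\norm{\mtx{B}\vct{y}}_2.
\]
Rearranging gives $\tfrac{1}{1+\eps}\,\norm{\vct{y}}_2 \leq \norm{\mtx{B}\vct{y}}_2 \leq \tfrac{1}{1-\eps}\,\norm{\vct{y}}_2$ for all $\vct{y} \in \R^d$, and the variational definitions of $\sigma_{\min}(\mtx{B})$ and $\sigma_{\max}(\mtx{B})$ finish the proof.

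There is no serious obstacle here: the only conceptual point worth emphasizing is that $\range(\mtx{B}) \subseteq \range(\mtx{A})$, which is what allows us to transfer the embedding inequality from $\range(\mtx{A})$ to arbitrary images $\mtx{B}\vct{y}$. The computation is essentially a one-line observation once the identity $\mtx{\Phi}\mtx{B} = \mtx{Q}$ is written down.
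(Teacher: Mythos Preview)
Your proof is correct and is essentially the same argument as the paper's: both use that $\mtx{Q}$ has orthonormal columns together with the subspace embedding property~\eqref{eqn:subspace-embed}, and then read off the singular-value bounds variationally. The only cosmetic difference is ordering: you first write $\mtx{\Phi}\mtx{B}=\mtx{Q}$ and apply the embedding to $\mtx{B}\vct{y}$, whereas the paper applies the embedding to $\mtx{A}\vct{y}$ to bound $\norm{\mtx{R}\vct{y}}_2$ and then substitutes $\vct{y}=\mtx{R}^{-1}\vct{x}$.
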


\Cref{prop:whiten} states that the whitened matrix $\mtx{B}$ has
the same range as the input matrix $\mtx{A}$, but the spectral condition number
$\kappa(\mtx{B}) \leq (1 + \eps) / (1 - \eps)$.  In other words, the
columns of $\mtx{B}$ are roughly orthogonal.

\begin{proof}
Since $\mtx{\Phi}$ is a subspace embedding for the $d$-dimensional subspace $\range(\mtx{A})$,
the range of the sketched matrix $\mtx{\Phi} \mtx{A}$ also has dimension $d$.
Thus, the $d \times d$ triangular factor $\mtx{R}$ must have full rank.

Since $\mtx{Q}$ has orthonormal columns, by the Pythagorean theorem, we have the identities
\[
\norm{ \mtx{R} \vct{y} }_2 = \norm{ \mtx{QR} \vct{y} }_2 = \norm{ \mtx{\Phi}\mtx{A} \vct{y} }_2
\quad\text{for all $\vct{y} \in \R^d$.}
\]
By the subspace embedding property~\eqref{eqn:subspace-embed},
\[
(1 - \eps) \cdot \norm{\mtx{A} \vct{y} } \leq \norm{ \mtx{R}\vct{y} }_2 \leq (1+\eps) \cdot \norm{\mtx{A} \vct{y} }_2
\quad\text{for all $\vct{y} \in \R^d$.}
\]
Make the change of variables $\vct{y} = \mtx{R}^{-1} \vct{x}$ to obtain
\[
(1 - \eps) \cdot \norm{ \mtx{A} \mtx{R}^{-1} \vct{x} }_2 \leq \norm{ \vct{x} }_2 \leq (1 + \eps) \cdot \norm{ \mtx{A} \mtx{R}^{-1} \vct{x} }_2
\quad\text{for all $\vct{x} \in \R^d$.}
\]
Divide by $\norm{ \vct{x} }_2$ and use the variational definition
of singular values to reach the stated bounds.
\end{proof}

\subsection{Approximate null space}

As a third application, we show that subspace embeddings allow us to compute
a trailing right singular subspace of a tall matrix quickly.
Our presentation is inspired by the papers~\cite{GPW12:Sketched-SVD,PN23:Fast-Randomized}.

\begin{problem}[Approximate null space]
Consider a tall matrix $\mtx{A} \in \R^{n \times d}$.  The task is to
find an orthonormal matrix $\mtx{W} \in \R^{d \times k}$ whose range
aligns with the $k$ trailing right singular vectors of $\mtx{A}$.
\end{problem}

The approximate null space problem supports a number of important applications.
It is the core computation required for total least-squares~\cite{GVL80:Analysis-Total},
which is also known as errors-in-variables regression in statistics.  In signal processing, some
spectrum estimation algorithms, such as MUSIC~\cite{Sch86:Multiple-Emitter}, require an approximate null space.
In approximation theory, the AAA algorithm~\cite{NST18:AAA-Algorithm} for rational approximation also
requires minimum singular vectors of tall matrices.

\subsubsection{Null space from approximate right singular subspaces}

Following a well-oiled track, we apply a subspace embedding to
the input matrix and compute its SVD.  Then we extract the
trailing right singular vectors to solve the approximate null space problem.
The reason this approach works is that the SVD of the sketched
matrix retains a lot of geometric information from the input matrix,
much like the \textsf{QR} factorization of the sketched matrix.
This idea is due to Gilbert et al.~\cite{GPW12:Sketched-SVD}.

\begin{listbox}
\begin{enumerate}
\item	Construct a (random, fast) subspace embedding $\mtx{\Phi} \in \R^{s \times n}$
for $\range(\mtx{A})$.

\item	Sketch the problem data: $\mtx{\Phi} \mtx{A} \in \R^{s \times d}$.

\item	Compute a full SVD of the sketched matrix: $\mtx{\Phi} \mtx{A} = \mtx{U} \mtx{\Sigma} \mtx{V}^*$.

\item	Report the trailing right singular vectors $\mtx{W} = \mtx{V}( : , (d-k+1):d ) \in \R^{d \times k}$.
\end{enumerate}
\end{listbox}

\noindent
Pseudocode for this procedure appears in \Cref{alg:sketch-svd}.  As usual, we choose the embedding dimension
$s = \mathcal{O}(d/\eps^2)$ to achieve distortion $\eps$.  The sketching step typically takes
$\mathcal{O}(nd \log d)$ operations, and the SVD step takes $\mathcal{O}(sd^2)$ operations.
The total is $\mathcal{O}(nd \log d + d^3 / \eps^2)$.
For comparison, a full SVD of the input matrix requires $\mathcal{O}(d^2 n)$ arithmetic.

\begin{algorithm}[t]
\begin{algbox}[1]
\caption{\textit{Approximate null space.}}
\label{alg:sketch-svd}

\Require	Matrix $\mtx{A} \in \R^{n \times d}$, dimension $k$ for approximate null space
\Ensure		Orthonormal matrix $\mtx{W} \in \R^{d \times k}$ whose range spans approximate null space 

\vspace{5pt}
\hrule
\vspace{5pt}

\Function{ApproxNullSpace}{$\mtx{A}$, $k$}

\State	Sample subspace embedding $\mtx{\Phi} \in \R^{s \times n}$ for $\range(\mtx{A})$
\Comment See~\Cref{sec:random-embed}

\State 	$\mtx{C} = \mtx{\Phi} \mtx{A}$ %
\Comment Embed problem data

\State	$(\mtx{U}, \mtx{\Sigma}, \mtx{V}^*) = \texttt{svd}(\mtx{C})$
\Comment Economy SVD

\State	$\mtx{W} = \mtx{V}(:, (d-k+1):d)$
\Comment Approximate $k$-dimensional null space
\EndFunction
\end{algbox}
\end{algorithm}

\subsubsection{Approximate null space: Analysis}

As usual, let $\mtx{A} \in \R^{n \times d}$ be a tall matrix,
perhaps rank deficient.
Here is a variational formulation of the null space problem: %
\begin{equation} \label{eqn:approx-null}
\minimize_{\mtx{V} \in \R^{d \times k}}\quad \fnormsq{ \mtx{A} \mtx{V} } \quad\subjto\quad \mtx{V}^* \mtx{V} = \Id_k.
\end{equation}
The solution to this problem is a subspace spanned by $k$ trailing
right singular vectors, and the optimal value is the energy in the
trailing $k$ singular values:
\begin{equation*} \label{eqn:approx-null-value}
\min\nolimits_{\mtx{V}^* \mtx{V} = \Id_k}\ \fnormsq{\mtx{A}\mtx{V}} = \sum_{j > d - k} \sigma_j^2(\mtx{A}).
\end{equation*}
If the $k$ smallest singular values are very small (or zero), then a subspace that
carries a similar amount of energy may serve as an approximate null space.

Using the subspace embedding property, we can easily verify that the sketching
method leads to a good solution to the approximate null space problem~\cite{PN23:Fast-Randomized}.

\begin{proposition}[Approximate null space; Park \& Nakatsukasa 2023]
Let $\mtx{A} \in \R^{n \times d}$ be a tall matrix, and let $\mtx{\Phi} \in \R^{s \times d}$
be a subspace embedding for $\range(\mtx{A})$ with distortion $\eps$.  \Cref{alg:sketch-svd} produces
an approximate null space $\mtx{W} \in \R^{d \times k}$ that is competitive with
the best $k$-dimensional null space:
\[
\fnormsq{ \mtx{A} \mtx{W} } \leq \frac{1+\eps}{1-\eps} \cdot
\min\nolimits_{\mtx{V}^* \mtx{V} = \Id_k}\ \fnormsq{ \mtx{A} \mtx{V} }.
\]
In particular, if $\mtx{A}\mtx{V} = \mtx{0}$ for some $k$-dimensional subspace $\mtx{V}$,
then $\mtx{A}\mtx{W} = \mtx{0}$.
\end{proposition}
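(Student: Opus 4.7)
The plan is to mirror the sketch-and-solve argument for least squares. First I would recast the algorithm variationally: since $\mtx{W}$ is formed from the $k$ trailing right singular vectors of the sketch $\mtx{\Phi A}$, it is a minimizer of the \emph{sketched} null space problem
\[
\mtx{W} \in \arg\min\nolimits_{\mtx{V}^*\mtx{V}=\Id_k}\ \fnormsq{\mtx{\Phi A V}},
\]
so for any competing orthonormal $\mtx{V} \in \R^{d \times k}$ we have $\fnormsq{\mtx{\Phi A W}} \leq \fnormsq{\mtx{\Phi A V}}$. This is the analogue of the optimality step used in Sarl\'os's least squares proof.

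Next I would invoke the subspace embedding property~\eqref{eqn:subspace-embed} \emph{column-by-column}. The crucial observation is that every column of $\mtx{A}\mtx{V}$ lies in $\range(\mtx{A})$, regardless of the choice of orthonormal $\mtx{V}$, so the embedding controls each column: $(1-\eps)\norm{\mtx{A}\vct{v}_j}_2 \leq \norm{\mtx{\Phi A} \vct{v}_j}_2 \leq (1+\eps)\norm{\mtx{A}\vct{v}_j}_2$. Squaring and summing over $j = 1, \dots, k$ promotes the embedding guarantee to the squared Frobenius norm:
\[
(1-\eps)^2\, \fnormsq{\mtx{A V}} \leq \fnormsq{\mtx{\Phi A V}} \leq (1+\eps)^2\, \fnormsq{\mtx{A V}}.
\]
This sandwich is the Frobenius-norm analogue of the bound~\eqref{eqn:subspace-embed} and is the one place the embedding hypothesis is actually used.

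Finally I would chain the inequalities exactly as in the proof of the least-squares proposition. Let $\mtx{V}_\star$ denote an optimizer of the original null space problem~\eqref{eqn:approx-null}. Applying the lower sandwich bound to $\mtx{W}$, then optimality of $\mtx{W}$ for the sketched problem, then the upper sandwich bound applied to $\mtx{V}_\star$, one obtains
\[
\fnormsq{\mtx{A W}} \;\leq\; \frac{1}{(1-\eps)^2}\, \fnormsq{\mtx{\Phi A W}} \;\leq\; \frac{1}{(1-\eps)^2}\, \fnormsq{\mtx{\Phi A V_\star}} \;\leq\; \frac{(1+\eps)^2}{(1-\eps)^2}\, \fnormsq{\mtx{A V_\star}},
\]
which recovers the claimed ratio (up to the standard convention one adopts for the embedding distortion, matching the $(1+\eps)/(1-\eps)$ form after rescaling $\eps$). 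The ``in particular'' clause is immediate: if $\fnormsq{\mtx{A V_\star}}=0$ for some $k$-dimensional orthonormal $\mtx{V}_\star$, the chain forces $\fnormsq{\mtx{A W}}=0$.

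There is no real obstacle in this argument — the proof is essentially a two-line chain of inequalities once the column-by-column lifting of the embedding property to Frobenius norm is in hand. The only subtle point worth flagging is that this lifting step requires the columns of $\mtx{A V}$ to lie in $\range(\mtx{A})$, which holds \emph{automatically} from the factorization but would fail if one tried to embed an arbitrary $d$-dimensional subspace of $\R^d$; thus the hypothesis that $\mtx{\Phi}$ embeds $\range(\mtx{A})$ (and not some other subspace) is exactly what the proof needs.
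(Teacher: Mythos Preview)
Your proposal is correct and follows essentially the same route as the paper: recognize $\mtx{W}$ as the minimizer of the sketched null space problem, lift the subspace embedding property~\eqref{eqn:subspace-embed} to the Frobenius norm by applying it column-by-column, and chain the lower embedding bound, sketched optimality, and upper embedding bound. Your constant $(1+\eps)^2/(1-\eps)^2$ is in fact what follows literally from the paper's norm-level definition~\eqref{eqn:subspace-embed}; the paper writes $(1+\eps)/(1-\eps)$, which is a minor looseness you correctly anticipated and flagged.
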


\begin{proof}
Fix an orthonormal matrix $\mtx{V}_{\star} \in \R^{d \times k}$ that solves
the null space problem~\eqref{eqn:approx-null}.
Since $\mtx{\Phi}$ is a subspace embedding for $\range(\mtx{A})$,
\[
\fnormsq{ \mtx{AW} } \leq \frac{1}{1-\eps} \cdot \fnormsq{ \mtx{\Phi}\mtx{A}\mtx{W} }
	\leq \frac{1}{1-\eps} \cdot \fnormsq{ \mtx{\Phi}\mtx{A}\mtx{V}_{\star} }
	\leq \frac{1+\eps}{1-\eps} \cdot \fnormsq{ \mtx{A} \mtx{V}_{\star} }.
\]
The first inequality is the lower subspace embedding property~\eqref{eqn:subspace-embed},
applied to each column of the matrix $\mtx{W}$.  The second inequality holds because $\mtx{W}$ solves
the null space problem~\eqref{eqn:approx-null} for the sketched matrix $\mtx{\Phi}\mtx{A}$,
so the matrix $\mtx{V}_{\star}$ must increase the objective.  Last, we apply
the upper bound from the subspace embedding property.
\end{proof}

\subsubsection{Approximate SVD: Analysis}

In fact, we can say quite a lot more about the SVD of the sketched matrix
that arises in the approximate null space algorithm.  This result is
essentially due to Gilbert et al.~\cite{GPW12:Sketched-SVD}.  The proof
is adapted from Park \& Nakatsukasa~\cite{PN23:Fast-Randomized}.

\begin{proposition}[Approximate singular values; Gilbert et al.~2012]
Let $\mtx{A} \in \R^{n \times d}$ be a tall matrix, and let $\mtx{\Phi} \in \R^{s \times d}$
be a subspace embedding for $\range(\mtx{A})$ with distortion $\eps$.
The singular values of the sketched matrix $\mtx{\Phi}\mtx{A}$ satisfy
\[
(1-\eps) \cdot \sigma_i(\mtx{A})
	\leq \sigma_i(\mtx{\Phi}\mtx{A}) \leq (1+\eps) \cdot \sigma_i(\mtx{A})
	\quad\text{for $i = 1, \dots, d$.}
\]
\end{proposition}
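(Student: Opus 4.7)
The plan is to combine the subspace embedding property with the Courant--Fischer min-max characterization of singular values. The first observation is that, since $\mtx{\Phi}$ is a subspace embedding for $\range(\mtx{A})$, applying the defining inequality~\eqref{eqn:subspace-embed} to vectors of the form $\vct{y} = \mtx{A}\vct{x}$ yields
\[
(1-\eps) \norm{\mtx{A}\vct{x}}_2 \leq \norm{\mtx{\Phi}\mtx{A}\vct{x}}_2 \leq (1+\eps) \norm{\mtx{A}\vct{x}}_2 \quad\text{for every $\vct{x} \in \R^d$.}
\]
In other words, $\mtx{\Phi}\mtx{A}$ acts on the entire domain $\R^d$ with norms that agree with those of $\mtx{A}$ up to the distortion factor, not just on some distinguished subspace.

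Next, I would invoke the Courant--Fischer variational formula
\[
\sigma_i(\mtx{B}) = \max_{\substack{\set{S} \subseteq \R^d \\ \dim(\set{S}) = i}}\ \min_{\vct{x} \in \set{S} \setminus \{\vct{0}\}}\ \frac{\norm{\mtx{B}\vct{x}}_2}{\norm{\vct{x}}_2},
\]
valid for any matrix $\mtx{B}$ with $d$ columns and any index $1 \leq i \leq d$. Fix an $i$-dimensional subspace $\set{S} \subseteq \R^d$, and divide the pointwise bound above through by $\norm{\vct{x}}_2$. This delivers matching multiplicative bounds on the Rayleigh quotient of $\mtx{\Phi}\mtx{A}$ in terms of the Rayleigh quotient of $\mtx{A}$ over $\set{S}$. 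Taking the minimum over $\vct{x} \in \set{S} \setminus \{\vct{0}\}$ and then the maximum over $i$-dimensional subspaces $\set{S}$ preserves the multiplicative bounds, because $(1 \pm \eps)$ are nonnegative constants that can be pulled through both operations. The result is precisely
\[
(1-\eps) \sigma_i(\mtx{A}) \leq \sigma_i(\mtx{\Phi}\mtx{A}) \leq (1+\eps) \sigma_i(\mtx{A}),
\]
as claimed.

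There is no serious obstacle here; the proof is a two-line application of variational principles once one chooses the right form of Courant--Fischer. The key conceptual point, worth highlighting, is that the subspace embedding property produces a \emph{pointwise} bound on the whole domain $\R^d$ (via the composition $\mtx{A}\vct{x} \in \range(\mtx{A})$), and this is exactly what allows the distortion factors to pass cleanly through both the inner minimization and the outer maximization in the variational characterization.
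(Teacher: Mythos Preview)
Your proof is correct and takes a genuinely different route from the paper's. The paper first writes an SVD $\mtx{A} = \mtx{U}\mtx{\Sigma}\mtx{V}^*$, observes that $\sigma_i(\mtx{\Phi}\mtx{A}) = \sigma_i((\mtx{\Phi}\mtx{U})\mtx{\Sigma})$ by rotational invariance, and then invokes Ostrowski's relative perturbation theorem to bound $\sigma_i((\mtx{\Phi}\mtx{U})\mtx{\Sigma})$ between $\sigma_d(\mtx{\Phi}\mtx{U})\cdot\sigma_i(\mtx{\Sigma})$ and $\sigma_1(\mtx{\Phi}\mtx{U})\cdot\sigma_i(\mtx{\Sigma})$; the spectral form of the embedding property (\Cref{prop:subspace-embed-sval}) then supplies $1-\eps \leq \sigma_d(\mtx{\Phi}\mtx{U})$ and $\sigma_1(\mtx{\Phi}\mtx{U}) \leq 1+\eps$. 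Your argument bypasses both the SVD and Ostrowski entirely: you observe that the embedding property gives a pointwise multiplicative bound on $\norm{\mtx{\Phi}\mtx{A}\vct{x}}_2$ over all of $\R^d$, and then push that bound through the Courant--Fischer min--max formula. Your approach is more elementary and self-contained (indeed, Ostrowski's inequality is itself typically proved via Courant--Fischer, so you are effectively inlining that step). The paper's approach has the modest advantage of tying the result back to the spectral characterization in \Cref{prop:subspace-embed-sval} and of name-dropping a classical perturbation theorem, which some readers may appreciate as context.
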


\begin{proof}
Let $\mtx{A} = \mtx{U} \mtx{\Sigma} \mtx{V}^*$ be an SVD of the input matrix.
Then $\mtx{\Phi}$ is a subspace embedding for $\range(\mtx{U})$.
For each index $i = 1, \dots, d$, by rotational invariance of singular values,
\[
\sigma_i(\mtx{\Phi}\mtx{A}) = \sigma_i(\mtx{\Phi} (\mtx{U} \mtx{\Sigma} \mtx{V}^*))
	= \sigma_i((\mtx{\Phi}\mtx{U}) \mtx{\Sigma})
\]
By Ostrowski's relative perturbation theorem~\cite{Ost59:Quantitative-Formulation},
\[
\sigma_{d}(\mtx{\Phi} \mtx{U}) \cdot \sigma_i(\mtx{\Sigma})
	\leq \sigma_i(\mtx{\Phi} \mtx{A}) \leq \sigma_{1}(\mtx{\Phi} \mtx{U}) \cdot \sigma_i(\mtx{\Sigma}).
\]
By the subspace embedding property~\eqref{eqn:subspace-embed},
\[
(1 - \eps) \cdot \sigma_{i}(\mtx{A}) \leq \sigma_i(\mtx{\Phi} \mtx{A})
	\leq (1 + \eps) \cdot \sigma_i(\mtx{A}).
\]
This is the advertised result.
\end{proof}

It is also possible to argue that the right singular subspaces of the sketched
matrix $\mtx{\Phi}\mtx{A}$ align with the right singular subspace of the original
matrix $\mtx{A}$, provided that there are gaps between the singular values.
These results are more technical, so we refer to~\cite{GPW12:Sketched-SVD,PN23:Fast-Randomized}
for the details.

\section{More themes}

We have studied several templates for designing
randomized algorithms for matrix computations.
Reviewing the literature in this field, it is striking how
many other strategies fall outside these categories.
In this section, we give a brief introduction
to some other algorithmic approaches that have
a probabilistic component:
randomized approximation for preconditioning,
placing problem instances in general position,
and smoothed analysis of problem perturbations.

\subsection{Randomized preconditioning}

Many linear algebra problems have associated \hilite{condition numbers} that
reflect the sensitivity of problem solutions and may impact the
difficulty of solving the problem numerically.  For example, the
linear system condition number describes the stability of the
solution to a linear system~\cite[Sec.~2.6.2]{GVL13:Matrix-Computations-4ed}.
For a positive-definite linear system, the cost of the conjugate gradient algorithm can be
bounded in terms of the linear system condition number~\cite[Sec.~10]{She94:CG-Agonizing}.

The \hilite{preconditioning} strategy replaces a badly conditioned
computational problem with an equivalent problem that has a smaller condition
number.  For linear systems and least-squares, preconditioning
schemes typically multiply the input matrix with another matrix
that is both easy to invert and that also reduces the condition number.

We have already encountered many algorithms for inexpensively
computing a structured approximation of a matrix.
Some of these approximations can be used as preconditioners
for solving linear algebra problems involving the matrix.

\begin{idea}[Randomized preconditioning]
Given an input matrix, use a randomized algorithm to form
a structured approximation that serves as a preconditioner.
\end{idea}

In this section, we summarize a few examples where
randomized methods lead to preconditioners for
least-squares problems and regularized linear systems.
The key benefit of randomized algorithms for preconditioning
is that they can provide excellent guarantees while
operating more efficiently than deterministic techniques.

\subsubsection{Example: Overdetermined least-squares}
\label{sec:rand-precond-rt}

Let $\mtx{A} \in \R^{n \times d}$ be a tall matrix with full rank,
and let $\vct{b} \in \R^n$ be a response vector.
Recall the overdetermined least-squares problem
\[
\minimize_{\vct{x} \in \R^d}\quad \tfrac{1}{2} \normsq{ \mtx{A}\vct{x} - \vct{b} }_2.
\]
Rokhlin \& Tygert~\cite{RT08:Fast-Randomized} observed that we can
use sketching to design a preconditioner. %

As usual, let $\mtx{\Phi} \in \R^{s \times n}$ be a (random, fast)
subspace embedding for $\range(\mtx{A})$.  We can form
a \textsf{QR} factorization of the sketched matrix:
\[
\mtx{\Phi}\mtx{A} = \mtx{QR}.
\]
The cost of these steps can be much smaller than solving the original least-squares problem.
Proposition~\ref{prop:whiten} states that $\mtx{AR}^{-1}$
is a very well-conditioned matrix with the same range as $\mtx{A}$.
We can solve a triangular linear systems with $\mtx{R}$ efficiently,
so it serves as an excellent preconditioner.

Thus, we pass to the equivalent least-squares problem
\[
\minimize_{\vct{y} \in \R^d}\ \tfrac{1}{2} \normsq{ (\mtx{A}\mtx{R}^{-1}) \mtx{y} - \vct{b} }_2
\quad\text{where}\quad
\vct{x} = \mtx{R}^{-1} \vct{y}.
\]
We can solve this problem efficiently using the LSQR algorithm~\cite[Sec.~11.4.2]{GVL13:Matrix-Computations-4ed}.
Assuming exact arithmetic, the Rokhlin \& Tygert algorithm can provably solve a least-squares problem
to high accuracy faster than any previously known algorithm.
On the other hand, there remain unresolved questions about
the numerical stability of this approach~\cite{meier2023sketch}.

\subsubsection{Example: Randomly pivoted Cholesky}

In a similar vein, we can use randomized Nystr{\"o}m approximations
to design a preconditioner for a linear system with a psd
matrix~\cite{FTU23:Randomized-Nystrom}.
This section outlines a related method that uses randomly
pivoted Cholesky to accelerate kernel computations~\cite{DEF+23:Robust-Randomized}.

Let $\mtx{A} \in \Sym_n^+(\R)$ be a psd matrix where we pay for entry evaluations,
such as a kernel matrix.  Suppose that we wish to solve the regularized linear system
\begin{equation} \label{eq:regularized-linear-system}
(\mtx{A} + \mu \Id) \vct{x} = \vct{b}
\quad\text{for $\mu > 0$.}
\end{equation}
This problem often arises in kernel methods, %
where it is known as \hilite{kernel ridge regression}.

In \Cref{sec:rpc}, we saw that the randomly pivoted Cholesky algorithm
can be used to compute a near-optimal psd approximation of the matrix $\mtx{A}$.
More precisely, we can obtain a rank-$k$ psd matrix~\smash{$\widehat{\mtx{A}}_k$},
represented as an eigenvalue decomposition.
The cost of producing the approximation is $\mathcal{O}(kn)$ entry
evaluations and $\mathcal{O}(k^2 n)$ arithmetic.

This approximation yields a preconditioner~\cite{DEF+23:Robust-Randomized}
for solving the regularized system:
\[
(\widehat{\mtx{A}}_k + \mu \Id)^{-1} (\mtx{A} + \mu \Id) \vct{x} = (\widehat{\mtx{A}}_k + \mu \Id)^{-1} \vct{b}.
\]
We can solve linear systems with the preconditioner efficiently,
after just $\mathcal{O}(kn)$ operations.
Therefore, we can apply preconditioned conjugate
gradient~\cite[Sec.~11.5.2]{GVL13:Matrix-Computations-4ed}
to quickly solve the regularized linear system\eedit{}{\ \eqref{eq:regularized-linear-system}}. 
See \cite[Thm.~2.2]{DEF+23:Robust-Randomized} for
an analysis and details on the runtime.

Other authors~\cite{ACW17:Faster-Kernel} have proposed preconditioners
based on random features approximations.
The concept is similar but this approach is less competitive in practice.

\subsubsection{Example: Sparse Cholesky for graph Laplacians}
\label{sec:kyng}

We conclude with a spectacular example of the randomized preconditioning paradigm,
due to Kyng \& Sachdeva~\cite{KS16:Approximate-Gaussian}. 
See~\cite{Tro19:Matrix-Concentration-LN} for another treatment
of this result.  The recent paper~\cite{GKS23:Robust-Laplacian}
describes an implementation and empirical evaluation of a closely
related algorithm.

Suppose that $\mtx{A} \in \Sym_n^+(\R)$ is the graph Laplacian
of a combinatorial graph with $n$ vertices and $m$ edges.
Teng~\cite{Ten10:Laplacian-Paradigm} points out that
many applications in data analysis and computer algorithms
require the solution to linear systems in this Laplacian matrix: 
\[
\mtx{A} \vct{x} = \vct{b}
\quad\text{where}\quad
\vct{1}^* \vct{b} = 0
\quad\text{and}\quad
\vct{1}^* \vct{x} = 0.
\]
We write $\vct{1} \in \R^n$ for the all-ones vector.
Nominally, this problem costs $\mathcal{O}(n^3)$ operations
to solve with a direct method.  Classical iterative methods, such as algebraic multigrid,
may not be reliable for complicated graphs.

Kyng \& Sachdeva devised a randomized algorithm that produces
an approximate Cholesky factor $\mtx{L}$ of the Laplacian matrix:
\[
0.5 \, \mtx{A} \psdle \mtx{LL}^* \psdle 1.5 \, \mtx{A},
\]
where $\psdle$ denotes the semidefinite order and the matrix $\mtx{L}$
is lower triangular and sparse.  Their algorithm is related to the classic full Cholesky decomposition algorithm,
but it repeatedly uses matrix Monte Carlo to sparsify the rank-one
correction.  Their method produces a lower triangular factor $\mtx{L}$
with just $\mathcal{O}( m \log n )$ nonzero entries
after $\mathcal{O}(m \log^3 n)$ arithmetic operations.

The approximate Cholesky factor can be used as a preconditioner
for conjugate gradient to solve the linear system:
\[
(\mtx{L}^{\dagger} \mtx{A} \mtx{L}^{*\dagger}) \vct{y}
	= \mtx{L}^{\dagger} \vct{b}
	\quad\text{where}\quad
	\mtx{x} = \mtx{L}^{* \dagger} \vct{y}.
\]
The dagger ${}^\dagger$ denotes the pseudoinverse.
Indeed, we can solve the triangular system in $\mtx{L}$ with just
$\mathcal{O}(m \log n)$ operations, and it reduces the condition
number of the problem to a constant.  Therefore, we can find a
solution with $\eps$ relative error after about $\log(1/\eps)$
iterations.

The total operation count for the approximate factorization and solving
the linear system is
\[
\mathcal{O}(m \log^2(n) (\log(n) + \log(1/\eps))).
\]
Recall that the direct method may cost fully $\mathcal{O}(n^3)$ operations.
For a sparse graph, $m \approx n$, and we achieve
a factor $n^2$ improvement over the direct method.
For a dense graph, $m \approx n^2$, and we still achieve
a factor $n$ improvement over the direct method.
This is a remarkable outcome.

\subsection{General position} 

Many deterministic algorithms exhibit failure modes that can arise
from unfortunate choices of the problem instance.
For example, Gaussian elimination can encounter zero pivots or
very small pivots that render it numerically unstable,
but experience suggests that ``most'' input matrices
do not cause problems.

One way to deal with the possibility of a challenging problem
instance is to randomly transform the instance into an equivalent
instance that avoids the failure modes with high probability.
In other words, we can try to place the instance in
\hilite{general position}.
We are using this term loosely, as the meaning depends
on the computational problem and the algorithm.

\begin{idea}[General position]
Randomly transform a problem instance to an equivalent instance
where the computational method is likely to succeed.
\end{idea}

We will summarize two situations where this is possible.
First, we discuss how random rotations of a linear system
can ensure that Gaussian elimination succeeds.
Second, we show how to simultaneously diagonalize
a pair of matrices by combining them randomly.

\subsubsection{Example: Random rotations for Gaussian elimination}
\label{sec:rand-rot}

In this section, we present one of the earliest randomized linear algebra methods,
following the original report~\cite{Par95:Randomizing-Butterfly} by D.~Stott Parker.

Suppose we wish to solve the (square, nonsingular) linear system $\mtx{A} \vct{x} = \vct{b}$
where $\mtx{A} \in \R^{n \times n}$.  To address this problem, the classic algorithm starts by
applying pivoted Gaussian elimination to the matrix~$\mtx{A}$ to obtain a pivoted \textsf{LU}
factorization.  Pivoting is the process of selecting which row to eliminate at each step.
For a general matrix, pivoting may be necessary because we cannot eliminate a row that contains a zero in the diagonal
position.  %
Unfortunately, the need to pivot conflicts with other goals, such as blocking computations
or minimizing communication costs.

Parker observed that the \hilite{degenerate} matrices where zero pivots arise
compose a set with Lebesgue measure zero.
He proposed that we might avoid these degenerate
examples by transforming the input matrix.  Let $\mtx{U}, \mtx{V} \in \M_n(\R)$
be orthogonal matrices, drawn uniformly at random.  We can pass from the original
linear system to the equivalent problem
\[
(\mtx{UAV}) \vct{y} = \mtx{U} \vct{b}
\quad\text{and}\quad
\vct{x} = \mtx{V}\vct{y}.
\]
Parker proved that the transformed matrix ${\mtx{A}}_{\rm rot} = \mtx{UAV}$
is more favorable for Gaussian elimination.  \hilite{With probability one,
we can perform Gaussian elimination without pivoting
on ${\mtx{A}}_{\rm rot}$ and never encounter a zero diagonal entry.}
We have placed the problem in general position.

What is the cost of this approach?
With random orthogonal matrices, the transformation requires
$\mathcal{O}(n^3)$ operations for two matrix--matrix multiplications.
Recall that Gaussian elimination (with or without pivoting)
demands $\mathcal{O}(n^3)$ operations, so the preprocessing
does not increase the asymptotic cost of the algorithm.
Meanwhile, by avoiding pivoting in the Gaussian elimination step,
we can design implementations that exploit blocking
and parallelism.
Variants of Parker's proposal~\cite{BDHT13:Accelerating-Linear-System}
have proven effective in practice.

Related ideas appear in work of Demmel et al.~\cite{DDH07:Fast-Linear-Algebra}.
They propose the ``randomized \textsf{URV} algorithm'' for rank-revealing factorization,
and they establish more precise bounds on the stability of the resulting
numerical algorithms.

\subsubsection{Example: Randomized joint diagonalization}

Consider two symmetric matrices $\mtx{A}, \mtx{B} \in \Sym_n(\R)$ that commute with each other:
$\mtx{AB} = \mtx{BA}$.  As you know, commuting symmetric matrices are simultaneously diagonalizable.
In other words, there always exists an orthogonal matrix $\mtx{Q} \in \M_n(\R)$
with the property that $\mtx{Q}^* \mtx{A} \mtx{Q}$ and $\mtx{Q}^* \mtx{B} \mtx{Q}$ are both diagonal matrices.
Our goal is to compute such an orthogonal matrix.

This task is delicate.  An orthogonal matrix that diagonalizes $\mtx{A}$ does not always diagonalize~$\mtx{B}$,
so we have to understand something about how the matrices interact.
Moreover, in applications, the matrices may not commute exactly
because they contain numerical errors or derive from noisy data.

He \& Kressner~\cite{HK22:Randomized-Joint} have studied an elegant %
strategy that can address both these challenges; see their paper for citations
to the original sources.
They form a \hilite{random linear combination} of the two input matrices:
\[
\mtx{C} = \gamma_1 \mtx{A} + \gamma_2 \mtx{B}
\quad\text{where $\gamma_1, \gamma_2 \sim \normal(0,1)$ i.i.d.}
\]
In other words, $\mtx{C}$ is a \hilite{generic point} in the plane spanned
by the two matrices $\mtx{A}$ and $\mtx{B}$. 
If the matrices $\mtx{A}$ and $\mtx{B}$ commute with each other,
then $\mtx{C}$ clearly commutes with both $\mtx{A}$ and $\mtx{B}$.
To solve the simultaneous diagonalization problem, we can simply
extract the orthogonal matrix $\mtx{Q}$ from an eigenvalue decomposition
$\mtx{C} = \mtx{Q} \mtx{\Lambda} \mtx{Q}^*$.

He \& Kressner analyzed the case where the matrices commute exactly,
as well as the situation where they only commute approximately.
\hilite{If $\mtx{A}$ and $\mtx{B}$ commute, then $\mtx{Q}$ simultaneously diagonalizes the pair $(\mtx{A}, \mtx{B})$
with probability one.}
Moreover, if $\mtx{A}$ and $\mtx{B}$ nearly commute, then $\mtx{C}$ still produces
an orthogonal matrix $\mtx{Q}$ that approximately diagonalizes the pair $(\mtx{A}, \mtx{B})$.

\subsection{Smoothed analysis}
\label{sec:smoothed-analysis}

As we have discussed, deterministic algorithms can exhibit failure modes
for malign problem instances.  For some algorithms, the challenging
examples are unusual in the sense that most nearby problem instances
are easier to solve.  Spielman \& Teng~\cite{ST02:Smoothed-Analysis}
developed this perspective, called \hilite{smoothed analysis},
and they showed that it sheds light on the performance of several basic
algorithms in linear algebra and optimization.
Their insight immediately suggests a computational strategy.
By randomly perturbing a hard problem instance,
we can find a nearby instance where the algorithm is likely to succeed.

\begin{idea}[Smoothed analysis]
Apply a random perturbation to a problem instance to obtain a nearby instance
that is more computationally favorable.
\end{idea}

\noindent
Conceptually, this method is distinct from the preconditioning strategy
or the general position strategy because it results in an inequivalent
problem instance that has a different solution.  While algorithms based
on smoothed analysis often produce a high-quality solution to a nearby
problem instance, the sleight of hand will discomfit many numerical
analysts.

In this section, we treat two dramatically different applications of this idea:
Gaussian elimination and nonsymmetric eigenvalue problems.

\subsubsection{Example: Gaussian elimination}

As before, we wish to apply Gaussian elimination to the matrix~$\mtx{A} \in \M_n(\R)$.
Parker's method of random rotation (\Cref{sec:rand-rot}) avoids zero diagonal entries,
but it does not necessarily control the \hilite{growth} of the diagonal
entries---which is the main source of instability in the algorithm.

Sankar et al.~\cite{SST06:Smoothed-Analysis} observed that Gaussian elimination
admits a smoothed anlaysis.  For a parameter $\sigma > 0$, 
suppose that we \hilite{perturb} the input matrix:
\[
\mtx{A}_{\sigma} \coloneqq \mtx{A} + \sigma \mtx{G}
\quad\text{where $\mtx{G} \in \M_n(\R)$ has i.i.d.~$\normal(0,1)$ entries}.
\]
With high probability, Gaussian elimination on the matrix
${\mtx{A}}_{\sigma}$ never encounters a pivot that is larger than a fixed
polynomial in the dimension $n$ and the reciprocal $\sigma^{-1}$ of the
scale for the perturbation.
This property ensures that it is safe to apply Gaussian elimination
to the instance ${\mtx{A}}_{\sigma}$.

As a consequence, we could simply run Gaussian elimination on the
perturbed matrix $\mtx{A}_{\sigma}$ to obtain an approximate
\textsf{LU} factorization of the original matrix $\mtx{A}$.
This result demonstrates that we can sometimes avoid numerical problems
if we are willing to make a modest change in the problem data.

\subsubsection{Example: Pseudospectral shattering}

Consider a \hilite{complex} matrix $\mtx{A} \in \M_n(\C)$ that is diagonalizable,
but not necessarily normal.  The eigenvalue problem asks us to compute a full eigenvalue decomposition:
\[
\mtx{A} = \mtx{VDV}^{-1}
\quad\text{where $\mtx{D} \in \M_n(\C)$ is diagonal and $\mtx{V} \in \M_n(\C)$ is nonsingular.}
\]
This problem is challenging when the eigenvalues of the matrix are close together
or when the matrix $\mtx{V}$ of eigenvectors is badly conditioned,
because the computation is very sensitive to small perturbations of the data.
The matrix computation literature describes reliable algorithms
(based on \textsf{QR} iteration with shifts), but there was no
rigorous analysis to justify the use of these methods---until recently.

Banks et al.~\cite{BGKS22:Pseudospectral-Shattering} showed that smoothed analysis
offers a strategy for addressing these difficulties.  They considered a \hilite{complex} Gaussian
perturbation of the matrix on the scale $\sigma > 0$:
\[
\mtx{A}_{\sigma} \coloneqq \mtx{A} + \sigma \mtx{G}
\quad\text{where $\mtx{G} \in \M_n(\R)$ has i.i.d.~$\normal_{\C}(0,1)$ entries}.
\]
They proved that the matrix $\mtx{A}_{\sigma}$ remains diagonalizable,
the eigenvalues are well-separated, and the eigenvectors become well-conditioned.
More precisely, they demonstrate that the pseudospectrum of $\mtx{A}_{\sigma}$
has $n$ isolated components on a scale that is polynomial in $n^{-1}$ and $\sigma$.
See \Cref{fig:pseudoshattering} for a schematic.

As a consequence, Banks et al.~\cite{BGKS22:Pseudospectral-Shattering} were able to prove that the nonsymmetric eigenvalue problem
submits to algorithms based on spectral divide-and-conquer techniques,
which operate by recursively dividing the set of eigenvalues into two comparably sized groups.
Gaps between eigenvalues, introduced by the perturbation, %
are essential for the success of the procedure.
Later, also using smoothed analysis, Banks et al.~\cite{BGS22:Global-Convergence-I}
proved that a variant of shifted \textsf{QR} iteration also succeeds.
This work finally confirms that there are backward-stable algorithms that
can solve the nonsymmetric eigenvalue problem in $\mathcal{O}(n^3)$ time,
modulo polynomial factors in $\log(n)$.  We can quickly and reliably produce a very
accurate solution to an eigenvalue problem near the target instance.

\begin{figure}
    \centering
    \includegraphics{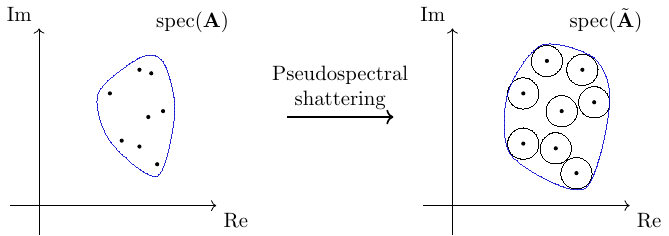}
    \caption{\textbf{Pseudospectral shattering.} After perturbation, the eigenvalues of the matrix are well-separated with high probability.}
    \label{fig:pseudoshattering}
\end{figure}

\subsection{Conclusions}

Our goal has been to isolate the probabilistic insights that play a role
in the design of randomized algorithms for matrix computations.
We have organized the methods thematically to find similarities
among algorithms that might seem different in concept:

\begin{enumerate}
\item	Monte Carlo approximation

\item	Random initialization

\item	Progress on average

\item	Randomized dimension reduction

\item	Randomized preconditioning

\item	General position

\item	Smoothed analysis
\end{enumerate}

\noindent
This list does not pretend to be comprehensive,
and the literature contains other strategies that fall outside of this taxonomy.
Undoubtedly, there are more techniques waiting to be discovered!

\section*{Acknowledgments}

JAT is grateful to Massimo Fornasier and the other organizers of the
2023 CIME Summer School on Machine Learning for the invitation to
give these lectures and to spend a week in Cetraro, Calabria.
(\textit{``Vi ravviso, o luoghi ameni!''})
JAT recognizes generous grant support from ONR Award N00014-18-1-2363,
NSF FRG Award 1952777, and the Caltech Carver Mead New Adventures
Fund that made possible the research, writing, and travel.
Additional financial and administrative support were provided
by CIME and by the Mathematics Department of TU-Munich.

AK expresses gratitude to the organizers of the 2023 CIME Summer School 
on Machine Learning and CIME for their support, which made her participation 
in the school and writing these notes possible.

We would like to thank Ethan Epperly for a close reading
and suggestions for improvements.

\printbibliography[title=References]

\end{document}